\def\COMMENT#1{}
\let\COMMENT=\footnote
\def\TASK#1{}
\def\noproof{{\unskip\nobreak\hfill\penalty50\hskip2em\hbox{}\nobreak\hfill%
        $\square$\parfillskip=0pt\finalhyphendemerits=0\par}\goodbreak}
\def\endproof{\noproof\bigskip}
\newdimen\margin   
\def\textno#1&#2\par{%
    \margin=\hsize
    \advance\margin by -4\parindent
           \setbox1=\hbox{\sl#1}%
    \ifdim\wd1 < \margin
       $$\box1\eqno#2$$%
    \else
       \bigbreak
       \hbox to \hsize{\indent$\vcenter{\advance\hsize by -3\parindent
       \sl\noindent#1}\hfil#2$}%
       \bigbreak
    \fi}
\def\proof{\removelastskip\penalty55\medskip\noindent{\bf Proof. }}
\newtheorem{thm}{Theorem}[section]
\newtheorem{lemma}[thm]{Lemma}
\newtheorem{claim}[thm]{Claim}
\newtheorem{question}[thm]{Question}
\newtheorem{prop}[thm]{Proposition}
\newtheorem*{thm*}{Theorem}
\newtheorem*{define*}{Definition}
\newtheorem*{examp*}{Example}
\newtheorem*{lem*}{Lemma}
\newtheorem*{claim*}{Claim}
\newtheorem*{fact*}{Fact}
\newtheorem*{col*}{Corollary}
\newtheorem*{conj*}{Conjecture}
\def\eps{\varepsilon}
\theoremstyle{definition}
\newtheorem{define}[thm]{Definition}
\begin{document}

\title{Tilings in vertex ordered graphs}

\author{J\'ozsef Balogh}
\address{
Department of Mathematical Sciences, University of Illinois at Urbana-Champaign, IL, USA, and Moscow Institute of Physics and Technology, Russian Federation.
Partially supported by NSF Grant DMS-1764123 and Arnold O. Beckman Research Award (UIUC) Campus Research Board 18132 and the Langan Scholar Fund (UIUC).
}
\email{jobal@illinois.edu}

\author{Lina Li}
\address{
Department of Mathematics, University of Illinois at Urbana-Champaign, Urbana, IL, USA.
} 
\email{linali2@illinois.edu}

\author{Andrew Treglown}
\address{
School of Mathematics, University of Birmingham, Edgbaston, Birmingham, UK.  Research supported by EPSRC grant EP/V002279/1.
}
\email{a.c.treglown@bham.ac.uk}
\date{\today}
\begin{abstract}
Over recent years there has been much interest in both Tur\'an and Ramsey properties of \emph{vertex ordered graphs}. In this paper we initiate the study of embedding spanning structures into vertex ordered graphs. In particular, we introduce a general framework for approaching the problem of determining the minimum degree threshold for forcing a \emph{perfect $H$-tiling} in an ordered graph. In the (unordered) graph setting, this problem was resolved by K\"uhn and Osthus [The minimum degree threshold for perfect graph packings, Combinatorica, 2009]. We use our general framework to resolve the perfect $H$-tiling problem for all ordered graphs $H$ of interval chromatic number $2$. Already in this restricted setting the class of extremal examples is richer than in the  unordered graph problem. In the process of proving our results, novel approaches to both the regularity and absorbing methods are developed. 
\end{abstract}
\maketitle

\section{Introduction} \label{Introduction}
Over recent years there has been interest in extending classical graph theory results to the setting of \emph{vertex ordered graphs}. 
A \emph{(vertex) ordered graph} or \emph{labelled graph} $H$ on $h$ vertices is a graph whose vertices have been labelled with $[h]:=\{1,\dots,h\}$.
An ordered graph $G$ with vertex set $[n]$ \emph{contains} an ordered graph $H$ on $[h]$ if (i) there is a mapping $\phi : [h] \rightarrow [n]$ such that
$\phi (i) < \phi (j)$ for all $1\leq i < j \leq h$ and  (ii) $\phi(i)\phi(j)$ is an edge in $G$ whenever $ij$ is an edge in $H$.

A foundation stone in extremal graph theory is Tur\'an's theorem which determines the number of edges in the densest $K_r$-free graph on $n$ vertices.
Furthermore, for every graph $H$,
the Erd\H{o}s--Stone--Simonovits theorem~\cite{es1, es2} determines, up to a small error term, the number of edges in the densest $H$-free $n$-vertex graph.
It is natural to seek Tur\'an-type results in the setting of ordered graphs. Indeed, this question was first raised by F\"uredi and Hajnal~\cite{fu},
and there are now many results in the area; see  Tardos~\cite{tardosbcc} for a survey of such results (and the related problem of Tur\'an-type results for \emph{edge} ordered graphs).
In particular, Pach and Tardos~\cite{pt} proved an analogue of the  Erd\H{o}s--Stone--Simonovits theorem in the setting of ordered graphs. In their result they show that the so-called \emph{interval chromatic number}
governs the threshold (for graphs $H$ of interval chromatic number at least $3$),
rather than the chromatic number (as is the case in the unordered setting). There are  several Tur\'an-type results  for ordered graphs of interval chromatic number $2$; see e.g.~\cite{new1, gy, kor, marc, pt, tardosbcc}, as well as  Tur\'an-type results for ordered hypergraphs, see~\cite{new2}.

There have also been a number of recent results concerning  Ramsey theory for ordered graphs, for example see the work of Balko,  Cibulka,  Kr\'al and Kyn\v{c}l~\cite{balko} and of
Conlon, Fox, Lee and Sudakov~\cite{con}.

In this paper we initiate the study of embedding \emph{spanning} structures in ordered graphs. In particular, we study the minimum degree required to ensure an ordered graph has a \emph{perfect $H$-tiling}.
In both the ordered and unordered settings, 
an \emph{$H$-tiling} in a graph $G$ 
is a collection of vertex-disjoint copies of $H$ contained in $G$.
 An
$H$-tiling is \emph{perfect} if it covers all the vertices of $G$.
Perfect $H$-tilings are also often referred to as \emph{$H$-factors}, \emph{perfect $H$-packings} or \emph{perfect $H$-matchings}. 
$H$-tilings can be viewed as generalisations of both the notion of a matching (which corresponds to the case when $H$ is a single edge) and the Tur\'an problem (i.e.~a copy of $H$ in $G$ is simply an $H$-tiling of size one).

A central result in the area is the Hajnal--Szemer\'edi theorem~\cite{hs} from 1970, 
which characterises the minimum degree that ensures a graph contains a perfect \(K_r\)-tiling. 

\begin{thm}[Hajnal and Szemer\'edi~\cite{hs}]\label{hs}
Every graph $G$ whose order $n$
is divisible by \(r\) and whose minimum degree satisfies $\delta (G) \geq (1-1/r)n$ contains a perfect $K_r$-tiling. Moreover, there are $n$-vertex graphs $G$
 with $\delta (G) = (1-1/r)n-1$ that do not contain a perfect $K_r$-tiling.
\end{thm}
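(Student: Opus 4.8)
We sketch how one would prove both halves of Theorem~\ref{hs}.

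\smallskip

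\noindent\emph{The extremal example.} For the ``moreover'' part I would exhibit the standard tight configuration. Write $n=rk$, take a clique $A$ on $(r-1)k-1$ vertices and an independent set $B$ on the remaining $k+1$ vertices, and join $A$ completely to $B$. Then every vertex of $B$ has degree $(r-1)k-1=(1-1/r)n-1$ and every vertex of $A$ has degree $n-1$, so $\delta(G)=(1-1/r)n-1$. Since $B$ is independent, any copy of $K_r$ in $G$ contains at most one vertex of $B$, so $k$ vertex-disjoint copies of $K_r$ cover at most $k<|B|$ vertices of $B$; hence $G$ has no perfect $K_r$-tiling.

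\smallskip

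\noindent\emph{The main statement.} I would argue by induction on $n$ (equivalently on $k=n/r$), the base case $n=r$ being immediate since $\delta(G)\ge r-1$ forces $G=K_r$. It is cleanest to pass to the complementary formulation: $G$ has a perfect $K_r$-tiling precisely when $\overline G$ has an equitable colouring into $k$ colour classes (each of size $r$), and $\delta(G)\ge(1-1/r)n$ translates into $\Delta(\overline G)\le k-1$. So the goal becomes the equitable colouring statement: every graph $H$ with $\Delta(H)\le k-1$ admits such a colouring; one actually proves the more flexible \emph{near-equitable} version (colour classes differing in size by at most one) so that the induction can proceed by deleting a single vertex. Delete a vertex $v$, colour $H-v$ near-equitably by induction, and try to reinsert $v$: as $v$ has at most $k-1$ neighbours, some colour class is free for $v$, and if a smallest class is free we are done; otherwise placing $v$ creates an oversized class while some other class stays undersized, and we must rebalance by moving a vertex between classes while keeping the colouring proper.

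\smallskip

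\noindent\emph{The main obstacle.} The heart of the proof is showing this rebalancing is always achievable, and this is exactly where the real difficulty lies. One sets up an auxiliary digraph on the colour classes — roughly, an arc from class $V_i$ to class $V_j$ records that some vertex of $V_i$ has no neighbour in $V_j$ and could be recoloured $j$ — and seeks a directed path from the oversized class to the undersized one along which a cascade of such recolourings can be performed simultaneously. Failure to find such a path forces strong structural information (many vertices whose few non-neighbours are confined to a small family of classes), and a counting argument using $\Delta(H)\le k-1$ contradicts this, possibly after first deleting a carefully chosen substructure and re-applying induction. This delicate case analysis is what makes the Hajnal--Szemer\'edi theorem genuinely hard; the shortest known treatment is the auxiliary-digraph argument of Kierstead and Kostochka, and I would follow its skeleton. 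Once an equitable $k$-colouring of $\overline G$ is obtained, its $k$ colour classes are independent sets in $\overline G$ of size $r$, hence vertex-disjoint copies of $K_r$ in $G$ covering all of $V(G)$, which is the desired perfect $K_r$-tiling.
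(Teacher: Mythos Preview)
The paper does not prove Theorem~\ref{hs}; it merely cites it as a classical result of Hajnal and Szemer\'edi and uses it as background. So there is no ``paper's own proof'' to compare against.

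That said, your sketch is sound. The extremal example is exactly right: the graph $K_{(r-1)k-1}+\overline{K_{k+1}}$ has minimum degree $(1-1/r)n-1$ and cannot be $K_r$-tiled since the independent set is too large. For the forward direction, your reduction to equitable colouring of $\overline G$ and the outline of the Kierstead--Kostochka auxiliary-digraph argument are accurate descriptions of a standard modern proof. You are also honest that the rebalancing step is where the genuine work lies and that the ``counting argument'' you allude to is not trivial; a full write-up would need to supply that case analysis in detail, but as a proof \emph{plan} this is correct and your awareness of where the difficulty sits is appropriate.
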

There has also been significant interest in the minimum degree threshold that ensures a perfect $H$-tiling for an arbitrary graph $H$.
After  earlier work on this topic (see~e.g.~\cite{alonyuster, kssAY}),  
K\"uhn and Osthus~\cite{kuhn, kuhn2}  determined, up to an additive constant, the minimum degree that forces a perfect $H$-tiling for \emph{any} fixed graph $H$.
In particular, they showed that, depending on $H$, the minimum degree threshold is governed by either the 
chromatic number $\chi (H)$ of $H$ or the
so-called \textit{critical chromatic number of $H$}.

\begin{define}[Critical chromatic number]
The \textit{critical chromatic number} $\chi_{cr}(F)$ of an unordered graph $F$ is defined as 
\[
\chi_{cr}(F):=(\chi(F) - 1)\frac{|F|}{|F| - \sigma(F)},
\]
where $\sigma(F)$ denotes the size of the smallest possible color class in any $\chi(F)$-coloring of $F$.
\end{define}

\begin{thm}[K\"uhn and Osthus~\cite{ kuhn2}]\label{kothm}
Let $\delta(H, n)$ denote the smallest integer $k$ such that every graph $G$ whose order $n$ is divisible by $|H|$ and with $\delta(G)\geq k$ contains a perfect $H$-tiling. 
For every unordered graph $H$, 
\[
\delta(H, n)=\left(1 - \frac{1}{\chi^*(H)}\right)n +O(1), 
\]
where  $\chi^*(H):=\chi_{cr} (H)$ if $\text{hcf}(H)=1$ and $\chi^*(H):=\chi (H)$ otherwise.
\end{thm}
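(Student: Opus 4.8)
The plan is to establish matching lower and upper bounds for $\delta(H,n)$. The lower bound splits into a \emph{space barrier}, which handles the $\chi_{cr}(H)$ regime, and a \emph{divisibility barrier}, which is needed precisely when $\text{hcf}(H) > 1$; the upper bound combines Koml\'os's almost-perfect tiling theorem with a completion step whose feasibility is governed by $\text{hcf}(H)$.

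For the lower bound I would exhibit a space barrier giving $\delta(H,n) \geq (1 - 1/\chi_{cr}(H))n - O(1)$ (which already suffices when $\text{hcf}(H) = 1$) and a divisibility barrier giving $\delta(H,n) \geq (1 - 1/\chi(H))n - O(1)$ when $\text{hcf}(H) > 1$. For the space barrier, let $G$ be the complete $\chi(H)$-partite graph on $n$ vertices (with $|H| \mid n$) having one part $V_1$ of size $\sigma(H)\, n/|H| - 1$ and the remaining $\chi(H) - 1$ parts as equal as possible. Since $\chi(H)$ is the chromatic number of $H$, every copy of $H$ in $G$ meets all parts, so the preimages of the parts form a proper $\chi(H)$-colouring of $H$ with all classes nonempty; in particular each copy places at least $\sigma(H)$ vertices in $V_1$, whence the $n/|H|$ copies of a putative perfect $H$-tiling would need at least $\sigma(H)\, n/|H| > |V_1|$ vertices inside $V_1$---impossible. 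A short computation using $\sigma(H)\chi(H) \leq |H|$ gives $|V_1| \leq \lceil n/\chi_{cr}(H)\rceil$, so that $\delta(G) = n - \lceil n/\chi_{cr}(H)\rceil = (1 - 1/\chi_{cr}(H))n - O(1)$. For the divisibility barrier I would perturb a complete $\chi(H)$-partite graph with almost balanced parts---shifting some part sizes by a bounded amount and/or altering the bipartite graph between two parts---so that the number of vertices any copy of $H$ can take from a prescribed union of parts is confined to a proper coset of a nontrivial subgroup of $\mathbb{Z}$, while the part sizes are rigged to miss the totals a perfect tiling would demand. Making this family precise, and verifying that together with the space barrier it exactly matches $\chi^*(H)$, is the delicate part of the lower bound.

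For the upper bound, suppose $|H| \mid n$ and $\delta(G) \geq (1 - 1/\chi^*(H))n + C$ for a suitable constant $C = C(H)$. A theorem of Koml\'os gives that $\delta(G) \geq (1 - 1/\chi_{cr}(H))n$ already forces an $H$-tiling of $G$ covering all but $o(n)$ vertices (proved via the Regularity Lemma and a nearly perfect fractional $K_{\chi(H)}$-tiling of the reduced graph, blown up); a further argument using the minimum degree condition---repeatedly swallowing the uncovered vertices, together with a bounded number of already tiled ones, into fresh copies of $H$---shrinks the uncovered set to a set $L$ of bounded size. The remaining, and hardest, step is to make $L$ empty. I would reserve in advance a bounded ``flexible'' family $\mathcal{F}$ of copies of $H$; to clear $L$, delete from the tiling the vertices of $L$ and a few copies from $\mathcal{F}$, and re-tile the resulting bounded vertex set greedily by new copies of $H$, the minimum degree condition supplying the required common neighbourhoods. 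The only way this can fail is an arithmetic mismatch, since each newly built copy contributes a number of vertices that depends on which colour-class sizes of $H$ it realises and the counts inside each part must come out exactly. When $\text{hcf}(H) = 1$ this never happens---colour-class sizes can be nudged by $\pm 1$ between successive copies so as to realise any residue---and $L$ is eliminated using only $\delta(G) \geq (1 - 1/\chi_{cr}(H))n + O(1)$. When $\text{hcf}(H) > 1$ the mismatch is a genuine obstruction, witnessed by the divisibility barrier, so one works instead from $\delta(G) \geq (1 - 1/\chi(H))n + O(1)$: applying the Regularity Lemma and Theorem~\ref{hs} to the reduced graph, cleaning the relevant $\chi(H)$-tuples to super-regularity and applying the Blow-up Lemma produces an almost-perfect $H$-tiling with ample room to spare, and the bounded leftover is then cleared by the same completion, the extra degree now being exactly what overcomes the residue obstruction.

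The hard part will be this final completion step and, underpinning it, the correct formulation of $\text{hcf}(H)$: one must prove that the divisibility barriers are the \emph{only} obstructions beyond the space barrier, i.e.\ that a bounded leftover $L$ can always be cleared precisely when $\text{hcf}(H) = 1$, and that otherwise exactly the additional $(1/\chi_{cr}(H) - 1/\chi(H))$-fraction of minimum degree is both necessary and sufficient. A secondary technical burden is running the Regularity and Blow-up Lemma machinery---including the fractional-to-integral rounding in the reduced graph---for a possibly unbalanced $H$, and checking that every ``$+O(1)$'' really is a constant rather than an $o(n)$ term.
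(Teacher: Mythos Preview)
The paper does not prove this theorem at all: Theorem~\ref{kothm} is stated as a known result of K\"uhn and Osthus, with an explicit citation to~\cite{kuhn2} and a footnote directing the reader there even for the definition of $\text{hcf}(H)$. It is background for the ordered-graph results that form the paper's actual content, so there is no proof in the paper to compare your proposal against.

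Your sketch is a plausible high-level outline of the original K\"uhn--Osthus argument (space barrier, divisibility barrier, Koml\'os's almost-tiling theorem plus a completion step), and you correctly identify where the real difficulty lies. But be aware that your description of the completion step is quite far from what the actual proof requires: the ``flexible family'' and ``greedy re-tiling of a bounded leftover'' picture understates the work involved. In the original paper the leftover is not bounded but of size $o(n)$, and absorbing it uses a careful structural analysis together with the Blow-up Lemma rather than a simple greedy argument; moreover the precise role of $\text{hcf}(H)$ (which involves both an ``hcf of colour-class sizes'' condition and a component-ratio condition) is more subtle than the single residue obstruction you describe. None of this is a flaw in your proposal as a roadmap, but if you were to carry it out you would find that ``the hard part'' you flag is substantially harder than your sketch suggests.
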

The definition of $\text{hcf}(H)=1$ is somewhat involved; see~\cite[Section 1.2]{kuhn2} for the definition and several illuminating examples. The moral behind the 
 dichotomy in Theorem~\ref{kothm}, however, is rather straightforward to articulate. Indeed, it
 arises as there are two types of extremal construction for this problem: so-called \emph{space barriers} (which `dominate' when $\chi^*(H)=\chi_{cr} (H)$) and \emph{divisibility barriers} (which `dominate' when $\chi^*(H)=\chi (H)$).

In this paper we show that the corresponding problem for ordered graphs has a rich behaviour.
Indeed, our main result resolves the problem for all ordered graphs $H$ of interval chromatic number $2$. Even in this restricted case
the nature of the minimum degree threshold is diverse, with a range of extremal examples coming into play, including a construction which is neither a divisibility nor space barrier.~Whilst we do not resolve the problem for all ordered graphs $H$, in Section~\ref{sec:gen} we introduce a framework that can be used to attack the problem in general. Moreover, another  contribution of the paper is the approaches we develop. Indeed, as we will discuss in Section~\ref{sec:expo}  we develop an approach to applying Szemer\'edi's regularity lemma~\cite{sze} and a (\emph{local-global}) philosophy for absorbing, both of which we believe are applicable to other embedding problems for ordered graphs. In particular, 
 a key property of  the regularity method -- which is regularly used to help embed (spanning) subgraphs in the unordered setting -- breaks down for ordered graphs; we introduce an approach to overcome this. 

\subsection{Our results}
Denote by $\delta_{<}(H, n)$ the smallest integer $k$ such that every ordered graph $G$ whose order $n$ is divisible by $|H|$ and with $\delta(G)\geq k$ contains a perfect $H$-tiling. The goal of this paper is to study $\delta_{<}(H, n)$; for this we will need a few definitions. In particular, while the chromatic number is a relevant parameter in the study of perfect $H$-tilings in graphs, the 
\textit{interval chromatic number} plays a  role in the study of perfect $H$-tilings in ordered graphs.

\begin{define}[Interval chromatic number]
The \textit{interval chromatic number} $\chi_{<}(H)$ of an ordered graph $H$ is the minimum number of intervals the
vertex set $[h]$ of $H$ can be partitioned into, so that no two vertices belonging
to the same interval are adjacent in $H$.
\end{define}
As well as $\chi_{<}(H)$, another parameter $\alpha^*(H)$ plays a role in the study of perfect $H$-tilings for ordered graphs with $\chi_{<}(H)=2$. To introduce $\alpha^*(H)$ we need the following definitions.

Let $\alpha^{+}_0(H):=0$. For every $1\leq \ell\leq \chi_{<}(H)$, we let
\begin{equation}
\alpha^{+}_{\ell}(H):=\text{the largest } k\in \mathbb{N} \text{ such that } [\alpha^{+}_{\ell-1}(H) + 1,\ k] \text{ is an independent set in }H.
\end{equation}
By the definition of interval chromatic number, we always have $\alpha^{+}_{\chi_{<}(H)}(H)=h$ and therefore $\bigcup_{\ell=1}^{\chi_{<}(H)} [\alpha^{+}_{\ell-1}(H) + 1,\ \alpha^{+}_{\ell}(H)]$ is a natural partition of $[h]$ into intervals, each spanning an independent set.
We also define such parameters in the reverse order. 
Let $\alpha^{-}_0(H):=h+1$. For every $1\leq\ell\leq \chi_{<}(H)$, we let
\begin{equation*}
\alpha^{-}_{\ell}(H):=\text{ the smallest } k\in \mathbb{N} \text{ such that }[k,\ \alpha^{-}_{\ell-1}(H) - 1] \text{ is an independent set in }H.
\end{equation*}
Similarly, we have $\alpha^{-}_{\chi_{<}(H)}(H)=1$ and therefore $\bigcup_{\ell=1}^{\chi_{<}(H)}[\alpha^{-}_{\ell}(H),\ \alpha^{-}_{\ell-1}(H) - 1]$ is a natural partition of $[h]$. 
We then define 
\begin{equation}\label{def:alphastar}
\alpha^*(H):=\min_{1\leq \ell< \chi_{<}(H)} \min \left\{\frac{\alpha_{\ell}^{+}(H)}{\ell\cdot h},\ \frac{h - \alpha_{\ell}^{-}(H) +1}{\ell\cdot h}\right\}.
\end{equation}
When the underlying graph is clear, we simply write $\alpha^{+}_{\ell}$, $\alpha^{-}_{\ell}$ and $\alpha^*$.

The following proposition shows that for any ordered graph $H$, the parameter $\alpha^*(H)$ provides a lower bound for $\delta_{<}(H, n)$.
\begin{prop}\label{prop:spabarr}
Let $H$ be an ordered graph on $h$ vertices.
For every  $n\in \mathbb N$ with $h|n$, there is an $n$-vertex ordered graph $G$
 with $\delta (G) \geq \left\lfloor(1-\alpha^*(H))n\right\rfloor -1$ that does not contain a perfect $H$-tiling.
\end{prop}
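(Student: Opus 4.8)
The plan is to build an explicit ordered graph $G$ witnessing the bound — a \emph{space barrier} adapted to the ordered setting. Since reversing the vertex order of both $H$ and $G$ interchanges the two quantities inside the minimum in~\eqref{def:alphastar}, I may assume that $\alpha^*(H)=\alpha^{+}_\ell/(\ell h)$ for some $1\le \ell<\chi_<(H)$; write $a:=\alpha^{+}_\ell$. Set $p:=an/h+1$ (an integer, since $h\mid n$), take $P$ to be the set of the first $p$ vertices of $[n]$, and partition $P$ into $\ell$ intervals $P_1<\dots<P_\ell$ whose sizes differ by at most one. Let $G$ be the ordered graph on $[n]$ in which $P_1,\dots,P_\ell$ are the parts of a complete $\ell$-partite graph on $P$, the remaining vertex set $Q:=[n]\setminus P$ spans a clique, and every vertex of $P$ is joined to every vertex of $Q$.

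For the minimum degree: a vertex of $P_i$ is adjacent to all of $(P\setminus P_i)\cup Q$, hence has degree $n-|P_i|$, while a vertex of $Q$ has degree $n-1$; so $\delta(G)=n-\max_i|P_i|$. Since $\max_i|P_i|\le\lceil p/\ell\rceil=\lceil an/(\ell h)+1/\ell\rceil\le\lceil an/(\ell h)\rceil+1$, we get $\delta(G)\ge n-\lceil an/(\ell h)\rceil-1=\lfloor(1-a/(\ell h))n\rfloor-1=\lfloor(1-\alpha^*(H))n\rfloor-1$, as required.

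It remains to check that $G$ has no perfect $H$-tiling; the heart of this is the following elementary fact about the parameters $\alpha^{+}_i$: if a prefix $[1,t]$ of $V(H)$ can be partitioned into at most $\ell$ intervals each independent in $H$, then $t\le a=\alpha^{+}_\ell$. This is the greedy/exchange statement that the canonical partition $[1,\alpha^{+}_1],[\alpha^{+}_1+1,\alpha^{+}_2],\dots$ uses the fewest intervals among all partitions of a given prefix into independent intervals; concretely, if $I_1<\dots<I_j$ are independent intervals partitioning $[1,t]$ with $I_s=[b_{s-1}+1,b_s]$ (and $b_0:=0$), an easy induction gives $b_s\le\alpha^{+}_s$ for every $s$, so $t=b_j\le\alpha^{+}_j$ forces $j\ge\ell$ once $t>\alpha^{+}_{\ell-1}$. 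Now consider any order-preserving embedding $\phi$ of $H$ into $G$. Because $P$ is an initial segment of $[n]$ and $\phi$ is increasing, the set $\{i\in[h]:\phi(i)\in P\}$ is a prefix $[1,t]$ of $V(H)$; and because $\phi$ preserves edges while each $P_j$ is an independent interval of $G$, the nonempty sets among $\phi^{-1}(P_1),\dots,\phi^{-1}(P_\ell)$ partition $[1,t]$ into at most $\ell$ independent intervals of $H$. Hence $t\le a$, i.e.\ every copy of $H$ in $G$ meets $P$ in at most $a$ vertices. A perfect $H$-tiling consists of $n/h$ copies of $H$, together covering at most $(n/h)a<p=|P|$ vertices of $P$, contradicting the fact that the tiling covers all of $P$; so no such tiling exists.

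The only parts requiring care are the combinatorial lemma on prefixes (and its mirror image, used in the reversed case) and the bookkeeping tying $|P|$, the near-balanced sizes $|P_i|$, and the floor in the statement together; the construction and the degree count are then routine. For very small $n$ one should also check the degenerate situation $Q=\emptyset$ — which occurs only when $\alpha^{+}_\ell=h-1$ and $n=h$ — where $G$ is complete $\ell$-partite and, since $\ell<\chi_<(H)$, contains no copy of $H$ at all (pulling the parts back along any embedding would partition $V(H)$ into $\le \ell$ independent intervals).
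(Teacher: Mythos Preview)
Your proof is correct and follows the same construction and counting argument as the paper: the same near-balanced $\ell$-partition of an initial segment of size $an/h+1$, with a complete graph on the rest. Your framing of the key step as a greedy/exchange lemma (that $b_s\le\alpha_s^+$ by induction) is a clean reformulation of the paper's descending-sequence contradiction, and you additionally verify the minimum degree bound explicitly, which the paper leaves implicit.
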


The main goal of this paper is to determine the asymptotics of $\delta_{<}(H, n)$ for graphs $H$ with interval chromatic number $2$. It turns out the value of $\delta_{<}(H, n)$ in this case depends on  structural properties of $H$ encapsulated by the following three definitions.
\begin{define}[Property~A]\label{def:Pa}
An ordered graph $H$ on $h$ vertices is said to have \textit{Property~A} if $H$ has no edges in the intervals $[1, \lfloor h/2 \rfloor+1]$ and $[\lceil h/2 \rceil, h]$.
\end{define}
Note that an ordered graph $H$ has Property A if and only if $\alpha^*(H)> 1/2$.
\begin{define}[Property~B]\label{def:Pb}
An ordered graph $H$ on $h$ vertices is said to have \textit{Property~B} if for all partitions of $[ h]$ into two non-empty intervals $[1, i]$ and $[i+1, h]$, there is an edge between these two intervals.
\end{define}

Let $H$ be an ordered graph on $h$ vertices. If $h$ is not isolated then
let $s(H)$ be the smallest vertex in $H$ that is adjacent to $h$.
Similarly, if $1$ is not isolated then  let $l(H)$ be the largest vertex in $H$ that is adjacent to $1$.
 
\begin{define}[Property~C]\label{def:Pc}
For an ordered graph $H$ on $h$ vertices, the vertex $h$ is said to have \textit{Property~C} 
if $h$ is not isolated, and there exists an edge in the interval $[s(H), h-1]$.
Similarly, the vertex $1$ is said to have  \textit{Property~C}
if $1$ is not isolated and there exists an edge in the interval $[2, l(H)]$.
\end{define}

Our main theorem shows that for any ordered graph $H$ with interval chromatic number 2, either its interval chromatic number $\chi_{<}(H)$ or the new graph parameter $\alpha^*(H)$ governs the minimum degree threshold that forces the existence of a perfect $H$-tiling in ordered graphs of large minimum degree.
\begin{thm}\label{mainthm:intchro2}
Let $H$ be an ordered graph on $h$ vertices with $\chi_{<}(H)=2$.
\begin{itemize}
\item[(i)] Suppose that $H$ does not have Property~A. Then 
\[\delta_{<}(H, n)=(1 - \alpha^*(H) + o(1))n.\]
\item[(ii)]  Suppose that $H$ has both Property~A and Property~B. Then 
\[\delta_{<}(H, n)=\left (1-\frac{1}{\chi_{<}(H)}+o(1) \right )n =(1/2 + o(1))n.\]
\item[(iii)] Suppose that $H$ has Property~A but not Property~B, and  one of the vertices $1, h$ has Property~C. Then
\[\delta_{<}(H, n)=\left (1-\frac{1}{\chi_{<}(H)}+o(1) \right )n=(1/2 + o(1))n.\]
\item[(iv)] Suppose that $H$ has Property~A but not Property~B, and neither of the vertices $1, h$ has Property~C. Then
\[\delta_{<}(H, n)=(1 - \alpha^*(H) + o(1))n.\]
\end{itemize}
\end{thm}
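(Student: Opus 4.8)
\textbf{Proof proposal for Theorem~\ref{mainthm:intchro2}.}

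The lower bounds come for free: in cases (i) and (iv) Proposition~\ref{prop:spabarr} already gives an $n$-vertex graph with $\delta(G)\ge \lfloor(1-\alpha^*(H))n\rfloor-1$ and no perfect $H$-tiling, while in cases (ii) and (iii) one only needs the trivial construction showing $\delta_<(H,n)\ge(1/2+o(1))n$ whenever $H$ has an edge (a graph $G$ on $[n]$ consisting of two independent intervals of sizes roughly $n/2$ joined completely still contains no $H$ if, say, one forces the smallest part to be too small for any copy of $H$ to fit; more carefully one uses a near-balanced bipartite-type construction whose smaller side cannot accommodate the heavier half of any copy of $H$, which is where Property~A being violated would have helped and why the hypothesis $\alpha^*(H)\le 1/2$ in case (i)/(iv) is exactly what makes the construction of Proposition~\ref{prop:spabarr} the stronger one). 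So the entire content of the theorem is the \emph{upper bounds}, i.e.\ that $\delta(G)\ge(1-\alpha^*(H)+o(1))n$ (resp.\ $(1/2+o(1))n$) suffices to force a perfect $H$-tiling.

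The plan for the upper bounds is the by-now standard two-stage ``absorbing plus regularity'' machinery used for perfect tilings, adapted to the ordered setting. First I would prove an \emph{almost-perfect tiling} lemma: any ordered $G$ with $\delta(G)$ above the stated threshold (minus $o(n)$) contains an $H$-tiling covering all but $o(n)$ vertices. For $\chi_<(H)=2$ this is genuinely an ordered-bipartite embedding statement, so I would apply the (ordered) regularity lemma, pass to a reduced graph $R$, and in $R$ find a near-perfect fractional tiling; the key point is that the relevant ``fractional'' structure respects the ordering of the clusters, so I need an ordered analogue of the fact that minimum degree $(1-1/\chi^*)n$ yields a fractional $K_{\chi^*}$-tiling --- here the two intervals of $H$ have sizes governed by the $\alpha_\ell^{\pm}$, and $\alpha^*(H)$ is precisely the quantity that makes a weighted/ordered Hall-type condition go through. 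Second, I would build an \emph{absorbing structure}: a small vertex set $A$ (of size $o(n)$) such that $A\cup X$ has a perfect $H$-tiling for every small $X$ of the correct size and ``type'' with respect to the ordering. The existence of enough absorbers for a fixed small vertex set follows from a local counting argument: above the threshold, every suitably-ordered constant-size vertex set lies in many copies of $H$ arranged so that deleting it still leaves an $H$-tiling of the remainder.

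I would then split into the four cases according to which construction is extremal, because the \emph{absorbing step} is where the cases truly diverge. When $H$ fails Property~A (case (i)), the threshold $1-\alpha^*(H)$ exceeds $1/2$, so $G$ is dense enough that absorbers of the required ordered type are plentiful and the argument is closest to the K\"uhn--Osthus proof of Theorem~\ref{kothm}, with $\alpha^*(H)$ playing the role of $1/\chi^*(H)$. Cases (ii) and (iii) have threshold only $1/2+o(1)$; here Property~B (an edge across every split of $[h]$) or Property~C (a ``nested'' edge near an endpoint) is exactly the combinatorial feature of $H$ that lets one still manufacture, from a vertex set sitting on \emph{both} sides of the median of $[n]$, a copy of $H$ --- without such an edge a balanced bipartite $G$ would have no copy of $H$ at all straddling the middle, which is why case (iv) (Property~A, not Property~B, no Property~C) reverts to the higher threshold $1-\alpha^*(H)$: there the extremal graph is again the one from Proposition~\ref{prop:spabarr}, and one re-runs the case-(i) argument. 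So the real work is: (a) an ordered fractional/regularity tiling lemma with $\alpha^*(H)$ as the parameter, and (b) four separate verifications that the relevant property of $H$ (A/B/C or its failure) yields enough ordered absorbers at the claimed minimum degree.

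\textbf{Main obstacle.} The hard part is the absorbing analysis in the borderline cases (ii) and (iii), where $\delta(G)$ is only $(1/2+o(1))n$: at this density $G$ need not be ``robustly'' connected across the median of its vertex order, so one must use Property~B or Property~C very carefully to show that an arbitrary small set of vertices --- which may be badly distributed relative to position $n/2$ --- can nonetheless be absorbed. Equivalently, one must prove an ordered supersaturation/absorber-counting statement that is tight, and check that the extremal construction of Proposition~\ref{prop:spabarr} is genuinely the \emph{only} obstruction in cases (i) and (iv), i.e.\ that no further ``ordered divisibility barrier'' pushes the threshold up; ruling this out (showing no construction beats $1-\alpha^*(H)$) is the delicate matching lower/upper-bound bookkeeping that makes the theorem an exact asymptotic rather than just an upper bound.
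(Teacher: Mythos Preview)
Your overall architecture (absorbing set plus almost-perfect tiling via regularity) matches the paper's, but you have the case analysis essentially inverted, and this leads to two genuine gaps.

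\emph{Lower bounds for (ii) and (iii).} Your ``trivial construction'' does not work: since $H$ has Property~A in these cases we have $\alpha^*(H)>1/2$, so the space barrier of Proposition~\ref{prop:spabarr} only gives a threshold \emph{below} $n/2$, and a near-balanced complete bipartite ordered graph \emph{does} contain perfect $H$-tilings. The paper instead builds two entirely different extremal families here: a \emph{divisibility barrier} for (ii) (two disjoint cliques of sizes not divisible by $h$; Property~B is precisely what forces every copy of $H$ to live inside one clique), and a \emph{local barrier} for (iii) (a single vertex $n$ whose only neighbours form an interval that is independent in $G$; Property~C is precisely what prevents $n$ from lying in any copy of $H$). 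Properties~B and~C are thus used to \emph{construct lower-bound examples}, not to manufacture absorbers in the upper bound as you suggest.

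\emph{Which case is hard for absorbing.} You place the difficulty in (ii)/(iii) and call case~(iv) a rerun of case~(i) at a ``higher threshold''. This is backwards. In (i)--(iii) the target minimum degree is at least $(1/2+o(1))n=(1-1/\chi_<(H)+o(1))n$, so a single uniform absorbing theorem at $\delta(G)\ge(1/2+\eta)n$ (the paper's Theorem~\ref{thm:abs}) handles all three cases simultaneously; Properties~B and~C play no role in the upper bound. In case~(iv), however, Property~A gives $\alpha^*(H)>1/2$, so the target threshold $(1-\alpha^*(H))n$ is \emph{strictly below} $n/2$, and the generic absorbing theorem is unavailable. The paper must prove a bespoke absorbing result (Theorem~\ref{thm:abs2}) valid already at $\delta(G)\ge\eta n$, and it is here that the structural hypotheses of case~(iv)---the isolated vertices forced by Property~A, the isolated endpoint forced by $\neg$B, and the independent interval $[s(H),h-1]$ forced by $\neg$C---are exploited to build absorbers. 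So the ``delicate'' absorbing analysis sits in (iv), not in (ii)/(iii).
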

In all cases of Theorem~\ref{mainthm:intchro2}, except Case~(iv), the minimum degree threshold  is at least $(1/2+o(1))n$. 
Furthermore, although the degree threshold for graphs in Cases~(ii) and~(iii) are the same, they have different types of extremal examples at work.
Shortly in Section~\ref{sec:exteg}, we will show that there are three types of extremal examples. Indeed, space barriers yield Proposition~\ref{prop:spabarr} and therefore give the lower bound in Cases~(i) and~(iv), divisibility barriers provide the lower bound for Case~(ii) and  local barriers provide the lower bound  in Case~(iii) -- while we recall that for unordered graphs there are only space and divisibility barriers (see~\cite{kuhn2}).

\subsection{Notation}
Given integers $n\geq m\geq 1$, let $[m, n]:=\{m, \ldots, n\}$ and $[n]:=\{1, \ldots, n\}$.
For two subsets $X, Y$ of $[n]$, we write $X<Y$ if $x<y$ for all $x\in X$ and $y\in Y$. When $X$ consists of a single element $x$, we simply write $x<Y$.

A vertex is \textit{isolated} if it has no neighbors.
An \textit{empty graph} on $n$ vertices  consists of $n$ isolated vertices with no edges.
The empty graph on $0$ vertices is called the \textit{null graph}.
For an ordered graph $G$ and a linearly ordered set $A\subseteq V (G)$, the induced subgraph $G[A]$ is the subgraph of $G$ whose vertex set is $A$ and whose edge set consists of all of the edges of $G$ with both endpoints in $A$. We define $G\setminus X:=G[V(G)\setminus X]$.
For two disjoint subsets $A, B\subseteq V (G)$, the induced bipartite subgraph $G[A, B]$ is the subgraph of $G$ whose vertex set is $A\cup B$ and whose edge set
consists of all of the edges of $G$ with one endpoint in $A$ and the other endpoint in $B$. For convenience, we also write $G[A, A]:=G[A]$.

Given an (ordered) graph $G$, a vertex $x \in V(G)$ and a set $X\subseteq V(G)$, we define $d_G (x,X)$ to be the number of  neighbors  that $x$ has in  $X$.

For two ordered graphs $G_1$ and $G_2$ with disjoint vertex sets, the \textit{join graph}, denoted by $G_1*G_2$, is the ordered graph obtained from $G_1$ and $G_2$ by adding 
 all edges between $V(G_1)$ and $V(G_2)$, and where the vertices are ordered so that $V(G_1) < V(G_2)$ and  both $V(G_1)$ and $V(G_2)$ preserve their orders from $G_1$ and $G_2$ respectively.
Given an unordered graph $G$ and a positive integer $t$, let $G(t)$ be the graph obtained from $G$ by replacing every vertex $x\in V(G)$ by a set $V_x$ of $t$ vertices spanning an independent set, and joining $u\in V_x$ to $v\in V_y$ precisely when $xy$ is an edge in $G$. That is we replace the edges of $G$ by copies of $K_{t,t}$. We will refer to $G(t)$ as a \textit{blown-up} copy of $G$.

Throughout the paper, we
omit all floor and ceiling signs whenever these are not crucial.
The constants in the hierarchies used to state our results are chosen from right to left.
For example, if we claim that a result holds whenever $0< a\ll b\ll c\le 1$, then 
there are non-decreasing functions $f:(0,1]\to (0,1]$ and $g:(0,1]\to (0,1]$ such that the result holds
for all $0<a,b,c\le 1$  with $b\le f(c)$ and $a\le g(b)$. 
Note that $a \ll b$ implies that we may assume in the proof that, for example, $a < b$ or $a < b^2$.

\subsection{Organisation of paper}
In the next section we describe the extremal examples which show that our main result is  best possible. In Section~\ref{sec:expo} we give a high-level overview of our  approach to the regularity and absorbing methods in the ordered setting.
In Section~\ref{sec:gen}, we introduce a general framework for attacking ordered tiling problems, and show how to use it to prove Theorem~\ref{mainthm:intchro2}. In particular, in this section we state a so-called `almost perfect tiling' theorem (Theorem~\ref{thm:almtil}) and two absorbing theorems (Theorems~\ref{thm:abs} and~\ref{thm:abs2}).
In Section~\ref{sec:reg}, we formally state  Szemer{\'e}di's regularity lemma and introduce related tools. 
We then prove Theorem~\ref{thm:almtil} in Section~\ref{sec:almtil}, and prove Theorems~\ref{thm:abs} and~\ref{thm:abs2} in Section~\ref{sec:abs}.
We close the paper with some concluding remarks in Section~\ref{sec:conclu}.

\section{Extremal examples}\label{sec:exteg}
\subsection{Space barriers}
We begin this section with the proof of Proposition~\ref{prop:spabarr} which provides a general lower bound on $\delta _<(H,n)$ for \emph{all} ordered graphs $H$.

The following observation will be useful.
Suppose that $G_1$ and $H_1$ are ordered graphs and $G'_1$ and $H'_1$ are obtained from $G_1$ and $H_1$ respectively by reversing the ordering on $V(G_1)$ and $V(H_1)$. Then clearly $G_1$ contains a perfect $H_1$-tiling if and only if $G'_1$ contains a perfect $H'_1$-tiling. Further, $\alpha ^*(H_1)=\alpha ^*(H'_1)$ and if
$\alpha^*(H_1)=\frac{|H_1|-\alpha ^-
_{\ell} (H_1)+1}{\ell |H_1|}$ for some $1\leq \ell< \chi_{<}(H_1)$, then $\alpha^*(H'_1)=\frac{\alpha^+
_{\ell}(H'_1)}{\ell |H'_1|}.$ 

\medskip

\noindent\textbf{Proof of Proposition~\ref{prop:spabarr}.}
By the observation above, without loss of generality we may assume  that $\alpha^*= {\alpha_{\ell}^{+}}/({\ell\cdot h})$ for some $1\leq \ell< \chi_{<}(H)$. 
Therefore to prove the proposition, it is sufficient to prove that for every $1\leq \ell< \chi_{<}(H)$, 
there is an $n$-vertex graph $G$ with $\delta (G) \geq \left\lfloor\left(1-\frac{\alpha_{\ell}^{+}}{\ell\cdot h}\right)n\right\rfloor -1$ that does not contain a perfect $H$-tiling. 

For simplicity, we set $s:= (\alpha_{\ell}^{+} \cdot n)/h$. 
Let $A_1\cup A_2\cup\ldots\cup A_{\ell}$ be a partition of the interval $[s+1]$ such that $A_1 <A_2<\ldots <A_\ell$ and $||A_i| - |A_j||\leq 1$ for every $1\leq i, j \leq\ell$.
Define 
\begin{equation*}
G:=G_1*G_2*\ldots*G_{\ell+1},
\end{equation*}
 where $G_i$ is an empty graph defined on $A_i$ for every $1\leq i \leq \ell$, and $G_{\ell+1}$ is a complete graph defined on $[n] - \bigcup_{i=1}^{\ell}A_i$.
Note that $n-s-1\geq 0$ as $\alpha^{+}_{\ell}<\alpha^{+}_{\chi_{<}(H)}=h.$ Therefore, $G_{\ell +1}$ is well-defined, and could be a null graph (only when $h=n$). 

We claim that for every copy of $H$ in $G$, 
\begin{equation}\label{eq:intersect}
|V(H)\cap [s +1]|\leq \alpha_{\ell}^{+}. 
\end{equation}
If not, then there exists a copy of $H$ in $G$ with the vertices $v_1 < v_2 <\ldots < v_h$ such that $v_{\alpha_{\ell}^{+}+1} \in [s +1]$. 
In particular, there exists an integer $k_{\ell}\leq \ell$ such that $v_{\alpha_{\ell}^{+}+1}\in A_{k_{\ell}}$.
By the maximality of $\alpha_{\ell}^{+}$, the vertex $v_{\alpha_{\ell}^{+}+1}$ has a neighbor $v_{\ell'}$ in $H$, where $\ell'\in [\alpha_{\ell-1}^{+} + 1,\ \alpha_{\ell}^{+}]$. 
Since $A_{k_{\ell}}$ is an independent set of $G$, this implies that there exists an integer $k_{\ell-1}< k_{\ell}$ such that $v_{\alpha_{\ell-1}^{+}+1}\in A_{k_{\ell-1}}$. 
Repeat this process until we reach to $A_1$. Then we obtain an integer $1\leq\ell_0\leq \ell$ and a sequence of numbers $\ell \geq k_{\ell} >  k_{\ell-1} > \ldots > k_i > \ldots >k_{\ell_0}=1$ such that $v_{\alpha_{i}^{+}+1}\in A_{k_i}$. In particular, we have $v_{\alpha_{\ell_0}^{+}+1}\in A_{1}$. By the maximality of $\alpha_{\ell_0}^{+}$ and $\ell_0\geq 1$, the vertex $v_{\alpha_{\ell_0}^{+}+1}$ has a neighbor $v_{\ell_0'}$ in $H$, where $v_{\ell_0'} < v_{\ell_0}$. However, we run out of the space for $v_{\ell_0'}$ as $A_1$ is an independent set with the smallest vertices.

Finally, suppose that $G$ has a perfect $H$-tiling $\mathcal{H}$. Then by~(\ref{eq:intersect}) we have 
\[
\left|V(\mathcal{H})\cap[s +1]\right|
\leq \frac{n}{h}\cdot \alpha_{\ell}^{+}
=s
< s +1,
\]
which contradicts the definition of a perfect $H$-tiling.
\qed

\smallskip
We refer to such examples $G$ as \emph{space barriers} as, in this case, the obstruction to $G$ containing a perfect $H$-tiling is that the vertex class $[s+1]$ is `too big'.
\subsection{Divisibility barriers}
\begin{prop}\label{prop:divbarr}
Let $H$ be an ordered graph on $h$ vertices with $\chi_<(H)=2$. 
Suppose that $H$ has Property B.
Then for every  $n\in \mathbb N$ with $h|n$, there is an $n$-vertex ordered graph $G$ with $\delta (G) \geq\lfloor n/2\rfloor-2$ that does not contain a perfect $H$-tiling.
\end{prop}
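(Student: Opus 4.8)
The plan is to construct a graph $G$ on $[n]$ that has minimum degree roughly $n/2$ but in which every copy of $H$ uses a ``wrong'' number of vertices from a designated set, so that parities (or residues) cannot be balanced across a perfect tiling. Since $\chi_<(H)=2$ and $H$ has Property~B, $H$ consists of two intervals $[1,a]$ and $[a+1,h]$ (the natural $2$-interval partition, so $a=\alpha_1^+$) with at least one edge between \emph{every} splitting of $[h]$ into two nonempty initial/final intervals; in particular $h-a = \alpha_1^-$-type data is controlled, and crucially $a$ need not equal $h/2$. The idea is to partition $[n]$ into an initial block $X$ and a final block $Y$ with $|X|$ chosen so that $|X|$ is \emph{not} a multiple of $a$ (equivalently the residue of $|X|$ mod $a$ is incompatible with tiling), and then put a complete graph on $X$, a complete graph on $Y$, but \emph{no} edges between $X$ and $Y$ — except we must be careful that $\delta(G)$ should still be about $n/2$, so in fact we take $|X|,|Y|\approx n/2$ and $G=G[X]+\text{(empty)}+G[Y]$ is wrong; instead $G$ should be the disjoint union-like construction where each of $X$, $Y$ induces a clique and there are no $X$–$Y$ edges. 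Then $\delta(G)=\min(|X|,|Y|)-1\ge \lfloor n/2\rfloor -2$ once $|X|,|Y|$ differ by at most one or two.

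The key structural claim is: \emph{every copy of $H$ in $G$ lies entirely inside $X$ or entirely inside $Y$.} This is exactly where Property~B is used. Suppose a copy of $H$ with vertices $v_1<\dots<v_h$ meets both $X$ and $Y$; since $X<Y$ as sets, there is an index $i$ with $v_1,\dots,v_i\in X$ and $v_{i+1},\dots,v_h\in Y$, and $1\le i\le h-1$. Property~B guarantees an edge of $H$ between $[1,i]$ and $[i+1,h]$, i.e.\ an edge $v_jv_k$ of $G$ with $j\le i<k$; but then $v_j\in X$ and $v_k\in Y$, contradicting that $G$ has no $X$–$Y$ edges. Hence every tile is monochromatic with respect to the $X/Y$ split, so a perfect $H$-tiling would force $h \mid |X|$ and $h \mid |Y|$.

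Therefore the final step is a counting/divisibility argument: choose $|X|$ with $|X|\equiv r \pmod h$ for some $r\notin\{0\}$ while keeping $|X|,|Y|$ within $1$ of $n/2$ (possible since $h$ is fixed and $n$ is large, as $h\mid n$ and we just need $|X|+|Y|=n$ with $|X|\not\equiv 0$); then $|X|$ is not a sum of copies of $h$, so $X$ cannot be perfectly tiled by $H$ using only tiles inside $X$, and by the claim there is no perfect $H$-tiling of $G$ at all. The minimum degree is $\min(|X|,|Y|)-1\ge \lfloor n/2\rfloor - 2$. The main obstacle I anticipate is purely bookkeeping: verifying that one can simultaneously make $|X|$ hit a nonzero residue mod $h$ \emph{and} keep both blocks of size at least $\lfloor n/2\rfloor -1$ so that the claimed degree bound holds; this is elementary since $h$ is a constant, but the exact choice of $|X|$ (and checking the edge-case $h=2$, where Property~B forces $1h\in E(H)$) needs to be stated cleanly. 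A secondary subtlety is to double-check the direction of Property~B: it asserts an edge between the two intervals for \emph{every} split point $i$, which is precisely what makes the ``monochromatic tile'' claim work for the specific split induced by where the copy of $H$ crosses from $X$ to $Y$.
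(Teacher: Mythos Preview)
Your proposal is correct and follows essentially the same argument as the paper: take $G$ to be the disjoint union of two cliques on an interval partition $X<Y$ of $[n]$ with $|X|\approx n/2$ and $h\nmid |X|$, use Property~B to show every copy of $H$ lies entirely in $X$ or entirely in $Y$, and conclude that no perfect $H$-tiling exists while $\delta(G)\ge\lfloor n/2\rfloor-2$. The only wrinkle is that your first paragraph briefly conflates residues modulo $a=\alpha_1^+$ with residues modulo $h$ (and entertains a wrong construction before discarding it); the relevant divisibility is modulo $h$, as you correctly state in the final step, and the paper simply takes $|X|$ to be the largest integer at most $\lceil n/2\rceil$ not divisible by $h$.
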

\begin{proof}
Let $k$ be the largest integer such that $k\leq \lceil n/2\rceil$ and $k$ is not divisible by $h$. 
Let $G$ be the disjoint union of two complete graphs on vertex sets $[ k]$ and $[k+1, n]$. Note that $k\ge \lceil n/2 \rceil-1$; and the minimum degree of $G$ is $\min \{k-1, n-k-1\}\ge \lfloor n/2\rfloor -2$. 

Suppose that $G$ has a perfect $H$-tiling. Then there must be at least one copy $H'$ of $H$, for which both $[ k]\cap V(H')$ and $[k+1, n]\cap V(H')$ are non-empty. However, this is not possible for $H$ with Property~B, as there are no edges between $[k]$ and $[k+1, n]$ in $G$. 
\end{proof}
Note we call  such  graphs $G$  \textit{divisibility barriers} as the obstruction to containing a perfect $H$-tiling is a divisibility issue (in this case, the size of each of the two cliques is not divisible by $h$).
\subsection{Local barriers}
\begin{prop}\label{prop:locbarr}
Let $H$ be an ordered graph on $h$ vertices with $\chi_<(H)=2$. 
Suppose that one of the vertices $1, h$ has Property~C.
Then for every  $n\in \mathbb N$ with $h|n$, there is an $n$-vertex ordered graph $G$ with $\delta (G) = \lfloor n/2\rfloor $ that does not contain a perfect $H$-tiling.
\end{prop}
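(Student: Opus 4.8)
The plan is to build a graph $G$ on $[n]$ with minimum degree exactly $\lfloor n/2 \rfloor$ in which every copy of $H$ is forced to ``sit'' in a particular way relative to the split point $\lfloor n/2 \rfloor$, and then to show that this local restriction is incompatible with tiling all of $[n]$. By symmetry (swapping the roles of $1$ and $h$, and reversing the vertex order), I may assume that the vertex $h$ has Property~C; so $s := s(H) \geq 1$ and there is an edge of $H$ inside the interval $[s, h-1]$. Set $m := \lfloor n/2 \rfloor$. The natural construction is to take $G$ to be the join $G_1 + G_2$, where $G_1$ is an empty graph on $[1, m]$ and $G_2$ is (essentially) a graph on $[m+1, n]$ that is \emph{empty except} for a carefully chosen clique or complete-bipartite piece occupying the top few vertices — the point being that the only edges inside the ``upper half'' lie very close to the vertex $n$. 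Concretely, let $G_2$ consist of a clique on the last $h - s$ vertices $[n - (h-s) + 1,\, n]$ together with isolated vertices on $[m+1,\, n-(h-s)]$. Then $\delta(G) = m$: vertices in $[1,m]$ see all of $[m+1,n]$, vertices in the isolated part of $G_2$ see all of $[1,m]$, and vertices in the top clique see $[1,m]$ plus the rest of the clique, which is at least $m$.

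The key structural claim is: \textbf{in any copy $H'$ of $H$ in $G$, the image of the vertex $h$ of $H$ must land in the top clique $[n-(h-s)+1, n]$, and moreover at least $h - s + 1$ vertices of $H'$ (namely the images of $s, s+1, \ldots, h$) must all land in $[m+1,n]$.} To see why the image $\phi(h)$ cannot lie in $[1,m]$: if it did, then since $G_1$ is empty, $\phi(h)$ would need all its $H$-neighbours strictly below it, but in $H$ the vertex $h$ has a neighbour (namely the vertex realising edges into $h$, of which there is at least one since $s \geq 1$) — that neighbour $\phi(j)$ with $j < h$, $j \geq s$, must lie below $\phi(h) \le m$, which is fine; the real obstruction is that $\phi(h)$ in the empty part has \emph{no} neighbours at all in $G$ to its own right, but it needs none, so I must be more careful. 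The correct argument: consider the edge of $H$ inside $[s, h-1]$, say between vertices $a < b$ with $s \le a < b \le h-1$. In $H$ the vertex $h$ is adjacent to $s$ (by definition of $s$), so $\phi(s) < \phi(a) < \phi(b) < \phi(h)$ with $\phi(s)\phi(h)$ and $\phi(a)\phi(b)$ both edges of $G$. Since $\phi(s),\phi(a),\phi(b),\phi(h)$ are four vertices with two disjoint-ish edges among the ``later'' ones, and the only edges of $G$ meeting vertices $> m$ other than the join edges are inside the top clique, I can push all of $\phi(s), \dots, \phi(h)$ into the top clique region, forcing $\phi(h) \in [m+1,n]$ and in fact the whole block $\phi([s,h])$ into $[m+1, n]$, i.e.\ $|V(H') \cap [m+1,n]| \geq h - s + 1 > h/2$ (using Property~A, which guarantees $s \le h/2$, so $h - s + 1 \geq h/2 + 1$).

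Granting the claim, a counting/divisibility argument finishes the proof: a perfect $H$-tiling $\mathcal H$ consists of $n/h$ copies of $H$, each using at least $h - s + 1$ vertices of $[m+1, n]$, so
\[
n - m = |[m+1,n]| \;\ge\; \frac{n}{h}\,(h - s + 1) \;=\; n - \frac{n(s-1)}{h}.
\]
By Property~A we have $s \leq \lceil h/2 \rceil$, hence $\tfrac{n(s-1)}{h} \le \tfrac{n(\lceil h/2\rceil - 1)}{h} < n/2 \le n-m$ when $h$ is even, giving a strict contradiction; a slightly finer count (tracking $\lceil n/2 \rceil$ versus $\lfloor n/2 \rfloor$ and the exact clique size $h-s$ rather than $h-s+1$ to avoid off-by-one slack) handles $h$ odd. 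I expect the main obstacle to be exactly this bookkeeping: pinning down the size of the top clique so that $\delta(G)$ is \emph{exactly} $\lfloor n/2\rfloor$ (not $\lfloor n/2\rfloor - 1$ or $+1$) while simultaneously keeping the block $\phi([s,h])$ provably trapped in $[m+1,n]$ and the arithmetic strict — it may be necessary to add one extra vertex to the clique, or to use a $K_{t,t}$-type piece instead of a clique depending on the internal structure of $H$ near vertex $h$, and to invoke Property~A at the precise spot where the inequality would otherwise only be tight.
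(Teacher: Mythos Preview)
Your construction does not work, and the failure is structural rather than a matter of bookkeeping. Take $H$ on $[4]$ with edges $\{1,4\}$ and $\{2,3\}$; then $\chi_<(H)=2$, $s=s(H)=1$, and the edge $\{2,3\}$ lies in $[s,h-1]=[1,3]$, so vertex $4$ has Property~C. In your graph $G$ (with $m=\lfloor n/2\rfloor$ and a clique on the top $h-s=3$ vertices), the maps $\phi_i = (2i-1,\,2i,\,m+2i-1,\,m+2i)$ for $i=1,\dots,n/4$ give a perfect $H$-tiling: each copy places two vertices in $[1,m]$ and two in $[m+1,n]$, and both required edges are present because the join makes $G[[1,m],[m+1,n]]$ complete bipartite. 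This same example refutes your ``key structural claim'': in such a copy only two of the images of $s,\dots,h$ lie in $[m+1,n]$, not all $h-s+1=4$ of them. The claim fails precisely because your join sends edges from the top clique (and indeed from every upper vertex) back down to $[1,m]$, so $\phi(s)$ need not stay in the upper half. You also invoke Property~A to bound $s\le h/2$, but the proposition does not assume Property~A.

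The paper's construction is quite different and is genuinely a \emph{local} barrier rather than a counting one. Take the complete bipartite graph on parts $[1,\lceil n/2\rceil-1]$ and $[\lceil n/2\rceil,n-1]$, and then adjoin a new vertex $n$ whose neighbourhood is \emph{only} $[\lceil n/2\rceil,n-1]$ --- crucially, $n$ is \emph{not} joined to the lower half. This forces $\delta(G)=\lfloor n/2\rfloor$ exactly. Now any copy of $H$ covering $n$ must use $n$ in the role of $h$; hence the image of $s=s(H)$ is a neighbour of $n$, so it lies in $[\lceil n/2\rceil,n-1]$; hence the images of $a,b$ (the edge in $[s,h-1]$) also lie in this interval; but that interval is independent in $G$, contradiction. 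So the single vertex $n$ lies in no copy of $H$ at all, and no perfect tiling exists. The argument is two lines once you have the right $G$: the obstruction is one uncoverable vertex, not a global count.
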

\begin{proof}
Without loss of generality, we assume that the vertex $h$ has Property~C.
Recall that $s(H)$ is the smallest vertex in $H$ that is adjacent to $h$. Since $h$ has Property~C, there exists an edge $ab$ in $H$ such that 
\begin{equation}\label{eq:locba}
s(H)\leq a < b \leq h-1.
\end{equation}

Let $G':=G_1 * G_2$, where $G_1$, $G_2$ are empty graphs on the vertex sets $[1, \lceil n/2 \rceil -1]$ and $[\lceil n/2 \rceil, n-1]$. 
Then we construct an ordered graph $G$ from $G'$ by adding the vertex $n$ and all edges between $n$ and $[\lceil n/2 \rceil, n-1]$.

Suppose that $G$ has a perfect $H$-tiling. Then there must be a copy of $H$ in $G$ such that $n$ plays the role of $h$ in it. 
By the construction of $G$, the image of $s(H)$ in $G$ lies in $[\lceil n/2 \rceil, n-1]$.
Then by~(\ref{eq:locba}), the images of $a, b$ in $G$ must lie in $[\lceil n/2 \rceil, n-1]$. 
This contradicts the fact that $[\lceil n/2 \rceil, n-1]$ is an independent set.
\end{proof}
We call such graphs $G$  \textit{local barriers} as the reason $G$ does not contain a perfect $H$-tiling is a localized issue  (in this case, there is a vertex that does not lie in a single copy of $H$).

\section{Applying the regularity and absorbing methods in the ordered setting}\label{sec:expo}
\subsection{The regularity method}
In this subsection we explain our approach to applying the regularity lemma in the ordered graph setting. Those readers unfamiliar with this result and related concepts should first read Section~\ref{sec:reg}.

Let $A_1,\dots, A_k$ be large disjoint equal size vertex classes in an (unordered) graph $G$ so that each pair $(A_i,A_j)$ (for distinct $i,j \in [k] $) is $\eps$-regular of density at least $d$, where $0<\eps<d$.
Such a structure is often found in an application of Szemer\'edi's regularity lemma and provides a framework for embedding subgraphs $H$ with $\chi (H)= k$ into $G$. Indeed, it is well-known that such a structure contains all \emph{fixed size} subgraphs $H$ of chromatic number at most $k$ (see Lemma~\ref{keylem}). In fact, for any  fixed subgraph $H$ with
$\chi (H)= k$, $G[A_1\cup\dots \cup A_k]$ must contain an almost perfect $H$-tiling. Moreover, the famous 
\emph{blow-up lemma} of Koml\'{o}s,  S\'ark\"{ozy} and Szemer\'{e}di~\cite{kssBlow} allows one to embed any almost spanning, bounded degree graph $F$ with $\chi(F)=k$ into $G[A_1\cup\dots \cup A_k]$.
These properties have been used in dozens of applications of the regularity lemma.

Ideally one would like to use such properties in the vertex ordered setting.
Similarly as before,  let $A_1,\dots, A_k$ be large disjoint equal size vertex classes in an $n$-vertex ordered graph $G$ so that each pair $(A_i,A_j)$ (for distinct $i,j \in [k]$) is $\eps$-regular of density at least $d$. 
Thus now the $A_i$s are subsets of $[n]$. 
Let $H$ be a fixed ordered graph with $\chi _< (H)=k$.
One can find a copy of $H$ in $G[A_1\cup \dots \cup A_k]$: as demonstrated by Lemma~\ref{lem:crop} in Section~\ref{sec:almtil},
one can find large subclasses $S_i\subseteq A_i$ for all $i \in [k]$ and a permutation $\sigma$ of $[k]$ so that $S_{\sigma(1)} <S_{\sigma(2)}<\ldots<S_{\sigma(k)}$. This allows us then to embed $H$ into $G[S_1\cup\dots \cup S_k]$ where the $i$th interval of $H$ is embedded into $S_{\sigma(i)}$.

However, in general it is far from true that  $G[A_1\cup \dots \cup A_k]$ should contain an almost perfect $H$-tiling. To see this consider the case when $k=2$ and $H$ is the ordered path $213$. Suppose $G[A_1,A_2]$ is in fact complete bipartite (so certainly $\eps$-regular) where $A_1<A_2$. Then each copy of $H$ in 
$G[A_1,A_2]$ must have one vertex in $A_1$ (the vertex playing the role of $1$) and two vertices in $A_2$;
so any $H$-tiling in $G[A_1,A_2]$ can only cover at most half of $A_1$.

At first sight this suggests perhaps the regularity method is not suitable for embedding large structures in ordered graphs. However, in this paper we demonstrate a method for overcoming this difficulty.
Suppose we wish to embed an (almost) perfect $H$-tiling in an ordered graph $G$ where $\chi _{<} (H)=r$. We obtain large disjoint vertex sets 
$A_1,\dots, A_k$ in $G$ so that each pair $(A_i,A_j)$ (for distinct $i,j \in [k]$) is $\eps$-regular of density at least $d$; now (i) $k$ may be significantly bigger than $r$ and (ii) the size of the classes $A_i$ may be far from equal. The class sizes and $k$ are chosen so that however the vertices from $A_1\cup\dots \cup A_k$ are labelled in $[n]$, there is a small $H$-tiling $\mathcal H$ in $G[A_1\cup \dots \cup A_k]$ such that $|A_i\cap V(\mathcal H)|/|A_j\cap V(\mathcal H)|=|A_i|/|A_j|$ for all distinct $i,j \in [k]$. As we now explain, with this property to hand, one can  easily find an almost perfect $H$-tiling in $G[A_1\cup \dots \cup A_k]$. Indeed, delete the vertices from $\mathcal H$. Still each pair $(A_i,A_j)$ is $2\eps$-regular and the ratios of the classes have been preserved. So we can  find a small $H$-tiling in $G[A_1\cup \dots \cup A_k]$ as before.
Repeating this process allows us to cover almost all the vertices in $G[A_1\cup \dots \cup A_k]$.

The challenge is to choose $k$ not too large (else $G$ will not be dense enough to guarantee such an $\eps$-regular structure $G[A_1\cup \dots \cup A_k]$) whilst ensuring the chosen ratios $|A_i|/|A_j|$ have the `ratio preservation' property described above. This latter point motivates the notion of a \emph{bottlegraph} of $H$ introduced in the next section. 

\subsection{The absorbing method}
The so-called \emph{absorbing method}, pioneered by R\"odl, Ruci\'nski and Szemer\'edi (see e.g.~\cite{rrs2}) has proved an immensely powerful technique for embedding problems in graphs and hypergraphs.
In particular, when one wishes to embed a spanning structure $F$ in a (hyper)graph $G$, the method can provide a certain `absorbing gadget' $Abs$ in $G$. With this gadget to hand, one then seeks to embed only an almost spanning subgraph $F'$ of $F$ into $G$;
$Abs$ will then have the power to extend the subgraph $F'$ into a copy of $F$ in $G$.
If the structure $F$ we seek  is a perfect $H$-tiling, then we will say $Abs$ is an \emph{$H$-absorbing set}.

The now standard approach to construct $H$-absorbing sets for perfect $H$-tilings in (hyper)graphs originates from a paper of Lo and Markstr\"om~\cite{lo}. 
Indeed, suppose one wishes to find a perfect $H$-tiling in an $n$-vertex graph $G$ where $h:=|H|$. In the simplest case,
they show that to construct an $H$-absorbing set in $G$ it suffices to show that for every pair $x,y \in V(G)$ there are $\Omega (n^{h-1})$  vertex classes $X \subseteq V(G)\setminus \{x,y\}$ of size $h-1$ so that both $X\cup \{x\} $ and $X\cup \{y\} $ span copies of $H$ in $G$.
Call such an $(h-1)$-set $X$ \emph{good for $x,y$}.

For many problems this property is relatively easy to establish. 
For example, in the (unordered) graph setting, a simple application of the regularity lemma can be used to establish this property when $G$ is an $n$-vertex graph with minimum degree more than $(1-1/\chi(H)+o(1))n$.

However, the analogous statement for ordered graphs is in general far from true. In particular, there are ordered graphs $H$ and $G$ where $\delta (G)$ is much greater than $(1-1/\chi_<(H))n$ and yet there exist pairs of vertices $x,y \in V(G)$ for which no $(h-1)$-set is good for $x,y$. For example, for any $n \in \mathbb N$ divisible by $3$, consider the complete $3$-partite ordered graph $G'$ on $[n-2]$ with classes $A_1,A_2,A_3$ of sizes $n/3-1$, $n/3-1$ and $n/3$ respectively, and where $A_1<A_2<A_3$. Obtain $G$ from $G'$ by adding vertices $n-1$ and $n$ where $n$ is adjacent to every vertex in $A_1\cup A_3$ and $n-1$ is adjacent to every vertex in $A_2 \cup A_3$.
Now choose $H$ to be the ordered graph obtained from the complete bipartite graph $K_{2,2}$ by labelling the elements in the first vertex class $1,2$; the elements in the second class $3,4$. Observe that $\chi_<(H)=2$.
Notice that $\delta (G)=2n/3-1$ and yet there are no good $3$-sets $X$ for $n-1, n$. Indeed,
this follows because any copy of $H$ containing $n$ in  $G\setminus \{n-1\}$ must use vertices in $A_1$ to play the role of $1,2$, whilst  any copy of $H$ containing $n-1$ in  $G\setminus \{n\}$ must use vertices in $A_2$ to play the role of $1,2$.

Despite this difficultly, in Theorem~\ref{thm:abs}, we are able to show the existence of an $H$-absorbing set in any $n$-vertex ordered graph $G$ with $\delta(G) >(1-1/\chi_<(H)+o(1))n$ (for every fixed ordered graph $H$).
The key is that, as made precise by Lo and Markstr\"om~\cite{lo}, to obtain an $H$-absorbing set in $G$ it is also sufficient to prove that for any $x,y \in V(G)$ there are `many' \emph{good sets} $X \subseteq V(G)$
of the same fixed (constant) size so that both $G[X\cup\{x\}]$ and $G[X\cup \{y\}]$
contain perfect $H$-tilings; see Lemma~\ref{lo} below.

The way we construct such good sets $X$ for every $x,y \in V(G)$ can be summarized by the following process -- a philosophy to absorbing that we term \emph{local-global absorbing}.
\begin{itemize}
    \item {\bf Step 1: local absorbing.} Prove that for any $x\in V(G)$, \emph{most} $y$ `close' to $x$ (with respect to the ordering on $V(G)=[n]$) are such that there are many good $(h-1)$-sets for $x,y$. 
    \item {\bf Step 2: global absorbing.} Piece together chains of the `local' good sets found in Step 1 to prove that for any pair $x,y \in V(G)$ there are many good sets $X$ of bounded size for $x,y$.
\end{itemize}
Note that our illustrative example above shows in general one cannot hope to replace the word \emph{most} in Step 1 with \emph{all}. On the other hand, the intuition why it is often easier to find good sets for $x,y \in V(G)$ where $x$ and $y$ have labels close together is that such $x$ and $y$ can often play the role of the same vertex in copies of $H$. 
The above two-step process is sufficient to prove Theorem~\ref{thm:abs}; Step 1 corresponds to Lemma~\ref{lem:connect} and Step~2 to Lemma~\ref{lem:connect2}.

On the other hand, to prove (a general case of) the other of our absorbing theorems (Theorem~\ref{thm:abs2}) we need a variant of this approach.
\begin{itemize}
    \item {\bf Step 1: local absorbing.} Prove that given any $x,y\in V(G)=[n]$ with both vertices either not too large (i.e., bounded away from $(1-o(1))n$) or not too small (i.e., not $o(n)$), there are many good $(h-1)$-sets for $x,y$. 
    \item {\bf Step 2: special global absorbing.} Prove that given any $x \in  [o(n)]$ and $y \in [n-o(n),n]$, there are many good $(h-1)$-sets for $x,y$. 
    \item {\bf Step 3: global absorbing.} Piece together chains of our  good sets found in Steps 1 and 2 to prove that for any pair $x,y \in V(G)$ there are many good sets $X$ of bounded size for $x,y$.
\end{itemize}
The intuition why in Step~2  it is often easier to find good sets for $x,y \in V(G)$ where $x$ is close to $1$ and $y$ is close to $n$ is as follows: in `most' copies of $H$ in $G$ containing $x$, $x$ must play the role of $1$, whist in `most' copies of $H$ in $G$ containing $y$, $y$ must play the role of $h$.


 \section{A general framework and the proof of Theorem~\ref{mainthm:intchro2}}\label{sec:gen}
 
\subsection{General framework}
In this section we introduce two theorems, which as well as being tools in the proof of  our main result, are applicable to the general perfect $H$-tiling problem for ordered graphs.

First, as described in the previous section, we adapt the absorbing method to the setting of ordered graphs. 
Let $H$ be an ordered graph. Given an ordered graph $G$, a set $S \subseteq V(G)$ is an \emph{$H$-absorbing set for $Q \subseteq V(G)$}, if both
$G[S]$ and $G[S\cup Q]$ contain perfect $H$-tilings. In this case we say that \emph{$Q$ is $H$-absorbed by $S$}. Sometimes we will simply refer
to a set $S \subseteq V(G)$ as an $H$-absorbing set if \emph{there exists} a non-empty set $Q \subseteq V(G)\setminus S$ that is $H$-absorbed by $S$. Roughly speaking,
the following result provides an absorbing set $Abs$ in an ordered graph $G$ of large minimum degree, where crucially $Abs$ is an $H$-absorbing set for \emph{every} not too large set of vertices $Q \subseteq V(G)\setminus Abs$.
\begin{thm}[Absorbing theorem]\label{thm:abs}
Let $H$ be an $h$-vertex ordered graph  and let $\eta >0$. 
Then there exists an $n_0\in \mathbb N$ and $0<\nu \ll \eta$ so that the following holds.
Suppose that $G$ is an $n$-vertex ordered graph  where $n\geq n_0$ and where
$$\delta (G) \geq \left (1-\frac{1}{\chi _{<} (H)}+\eta \right )n.$$
Then $V(G)=[n]$ contains a set $Abs$ so that
\begin{itemize}
\item $|Abs|\leq \nu n$;
\item $Abs$ is an $H$-absorbing set for every $W \subseteq V(G) \setminus Abs$ such that $|W| \in h \mathbb N$ and  $|W|\leq \nu ^3 n $.
\end{itemize}
\end{thm}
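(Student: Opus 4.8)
The plan is to follow the standard absorbing-method recipe, but with the extra bookkeeping that the vertex order forces upon us. First I would fix the natural parameters: set $d := 1 - 1/\chi_<(H)$, so that $\delta(G) \geq (d+\eta)n$, and let $r := \chi_<(H)$. The goal is to build $Abs$ as a union of many small ``absorbing gadgets'', each of which can absorb a small batch of vertices, and then take a random subset of the collection of all such gadgets so that, with positive probability, the resulting $Abs$ is small, the gadgets are vertex-disjoint, and every not-too-large set $W$ of the right size is absorbed by a sufficient number of disjoint gadgets inside $Abs$.

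\medskip
\noindent\textbf{Step 1: An absorber for a single $h$-set.} The key local claim is: for \emph{every} ordered $h$-set $Q = \{q_1 < \dots < q_h\} \subseteq V(G)$, the number of ordered sets $S$ of some fixed size $m = m(h)$ with $G[S]$ and $G[S \cup Q]$ both containing perfect $H$-tilings is at least $\gamma n^m$ for some $\gamma = \gamma(h,\eta) > 0$. This is where the ordered setting genuinely bites: in the unordered case one just takes $S$ to be two vertex-disjoint copies of $H$ minus a vertex, glued appropriately; here one must choose vertices that are correctly interleaved with $q_1, \dots, q_h$ in $[n]$. I would construct such an $S$ greedily: since $\delta(G) \geq (d+\eta)n$, the common neighbourhood of any bounded set of vertices has size at least $\eta n - O(1)$, and — crucially — such a large common neighbourhood must contain many vertices in each of the $h+1$ ``gaps'' determined by $Q$ (any interval of $[n]$ of length $\geq \eta n / (h+1)$ meets it), so one can always place the next vertex of the partial copy of $H$ into whichever gap the order demands. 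Iterating this lets one build one copy of $H$ on $Q \cup S_1$ (with $S_1$ of size $h - $ (number of $q_i$ used, which is all of them, so $|S_1| = 0$ is wrong) — more precisely, one builds copies of $H$ that use the $q_i$ in their correct ordinal positions — together with a ``parallel'' structure on $S$ alone admitting its own perfect $H$-tiling. The cleanest implementation: for each ordered $h$-set $Q$ choose $S$ to consist of $h$ further vertices forming, with appropriate subsets, two edge-disjoint ways of completing to $H$-tilings; the counting lower bound $\gamma n^{|S|}$ then follows from the greedy argument having $\Omega(n)$ choices at each of a bounded number of steps.

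\medskip
\noindent\textbf{Step 2: From one $h$-set to all of $W$.} Having the single-set absorbers, define, for a set $W$ with $|W| \le \nu^3 n$ and $h \mid |W|$, a partition of $W$ into $|W|/h$ ordered $h$-sets $Q^{(1)}, \dots, Q^{(|W|/h)}$. If $Abs$ contains, for \emph{each} ordinal type of $h$-set, many disjoint absorbers, then $W$ is absorbed by absorbing each $Q^{(j)}$ with a distinct gadget — but the number of ordinal types is unbounded, so instead one absorbs $W$ a few vertices at a time using gadgets that work for \emph{any} placement; that is the content of making Step 1 uniform over $Q$. The standard move is: call $S$ (of size $m$) an absorber if it absorbs \emph{some} $h$-set; by Step 1, every $h$-set $Q$ is absorbed by $\ge \gamma n^m$ such sets. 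Let $\mathcal{F}$ be the family of all absorbers; $|\mathcal F| \le \binom{n}{m}$. Pick each $S \in \mathcal F$ independently with probability $p = \nu n / (2|\mathcal F|) \cdot$ (normalisation), and clean up overlaps. By standard concentration (Chernoff / Markov), with positive probability the surviving family $\mathcal F'$ has size $\le \nu n / m$ (so $|Abs| := |\bigcup \mathcal F'| \le \nu n$), is vertex-disjoint, and for every $h$-set $Q$ contains $\ge \Theta(\nu^3 n)$ disjoint members absorbing $Q$. Then any $W$ with $|W| \le \nu^3 n$ is absorbed by peeling off its $h$-sets one at a time, each time using a fresh gadget from $\mathcal F' \subseteq Abs$ that absorbs that particular $h$-set and has not yet been used; the count $\Theta(\nu^3 n) \gg \nu^3 n / h$ guarantees we never run out. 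Finally $Abs$ itself has a perfect $H$-tiling since each gadget does (and they are disjoint).

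\medskip
\noindent\textbf{Main obstacle.} The genuinely new difficulty, compared to the unordered absorbing method, is Step 1: proving that every ordered $h$-set — regardless of how its elements are spread through $[n]$, including the degenerate case where all of $Q$ is clustered in a tiny interval — has polynomially many absorbers. The minimum-degree condition controls \emph{how many} common neighbours a bounded set has, but not \emph{where} they sit in the order; the saving grace is that $\eta n$ is linear, so a common neighbourhood cannot avoid any interval of length $\ge \eta n / h$, which is exactly enough room to slot in each successive vertex of a growing ordered copy of $H$ in its mandated position. Turning this observation into a clean greedy embedding lemma (and checking it yields both a copy of $H$ through $Q$ with the $q_i$ in the right ordinal slots \emph{and} a disjoint ``dummy'' structure on $S$ alone) is the technical heart of the argument; everything after it is the routine random-selection argument of R\"odl--Ruci\'nski--Szemer\'edi.
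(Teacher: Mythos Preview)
Your Step~1 contains a claim that is false, and this is not a minor slip but the crux of the matter. You write that a common neighbourhood of size $\ge \eta n$ ``must contain many vertices in each of the $h+1$ `gaps' determined by $Q$ (any interval of $[n]$ of length $\ge \eta n/(h+1)$ meets it)''. This does not follow: a set of size $\eta n$ can be concentrated entirely in a single short interval and miss every other interval, however long. Moreover, the common neighbourhood of a bounded set of $k$ vertices is only guaranteed to have size $\ge (1 - k/r + k\eta)n$, which is useless once $k \ge r$; so one cannot build an ordered copy of $H$ one vertex at a time by repeatedly intersecting neighbourhoods. The greedy embedding you sketch therefore does not go through, and you have not actually established the ``many absorbers for every $Q$'' statement that the rest of your outline relies on.

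The paper avoids this difficulty in two ways. First, it works with the Lo--Markstr\"om single-vertex switching hypothesis rather than with absorbers for arbitrary $h$-sets: it shows that for any pair $x,y$ there are many $(sh-1)$-sets $X$ with $G[X\cup\{x\}]$ and $G[X\cup\{y\}]$ both perfectly $H$-tileable. Second --- and this is where the ordered structure is really handled --- it proves the local version of this by applying the multipartite regularity lemma to a partition of $[n]$ into \emph{intervals} $W_1<\cdots<W_t$, after deleting all edges inside each $W_i$. Then any two adjacent clusters in the reduced graph are automatically comparable in the vertex order, so a copy of $K_r$ in the reduced graph gives $r$ pairwise $\eps$-regular clusters that are linearly ordered; the key lemma then yields many ordered copies of $H$. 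This is exactly the device that replaces your (incorrect) claim about where common neighbourhoods must sit. The paper further only proves the switching statement for $x,y$ that are close in $[n]$, and then chains short steps together to reach arbitrary pairs. Your random-selection Step~2 is essentially the standard argument (and is what the paper cites as Lemma~\ref{lo}), but it cannot be invoked until the local lemma is secured.
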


Theorem~\ref{thm:abs} suffices for our applications in most cases.
Indeed, it is immediately applicable to the  perfect $H$-tiling problem
for any ordered graph $H$ where the minimum degree threshold for ensuring 
a perfect $H$-tiling in an ordered graph $G$ is at least $(1-\frac{1}{\chi _{<} (H)}+o(1)  )|G|$.
In particular, we will use this theorem for Cases (i)--(iii) of Theorem~\ref{mainthm:intchro2}. 
For Case (iv) (and we suspect at least for some special cases of the general perfect $H$-tiling problem) we  require an absorbing theorem for ordered graphs with much smaller minimum degree. In this situation, some structural properties of $H$ can help us improve the absorbing argument; see Theorem~\ref{thm:abs2} below.

As indicated above, to apply the absorbing method one requires a sister  \textit{almost perfect tiling theorem}, which usually states that in a graph with large minimum degree all but $o(n)$ vertices are covered by some $H$-tiling.
Although the variety of extremal examples indicates that 
proving a sharp almost perfect $H$-tiling theorem for an arbitrary ordered graph $H$ seems to be very difficult, 
in this section we propose a \textit{general framework} for obtaining such almost perfect tiling theorems.

Let $B$ be a complete $k$-partite unordered graph with parts $U_1,\ldots, U_k$,
and $\sigma$ be a permutation of the set $[k]$. 
An \textit{interval labeling} of $B$ with respect to $\sigma$ is a bijection $\phi: V(B)\rightarrow [|B|]$ such that $\phi(U_i)<\phi(U_j)$ if $\sigma(i)<\sigma(j)$.
Given $t \in \mathbb N$, recall that $B(t)$ is a blow-up  of $B$ with vertex set $\bigcup_{x\in V(B)}V_x$, where the $V_x$s are sets of $t$ independent vertices. 
Let $(B(t), \phi)$ be the ordered graph obtained from $B(t)$ by equipping $V(B(t))$ with a vertex ordering, satisfying $V_x < V_y$ for every $x, y\in V(B)$ with $\phi(x)<\phi(y)$.
We refer to $(B(t), \phi)$ as an \textit{ordered blow-up}  of $B$. 

\begin{define}[Bottlegraph]
For an ordered graph $H$, we say that a complete $k$-partite unordered graph $B$ is a \textit{bottlegraph} assigned to $H$, if for every permutation $\sigma$ of $[k]$ and every interval labeling $\phi$ of $B$ with respect to $\sigma$, there exists a constant $t=t(B, H, \phi)$ such that the ordered blow-up $(B(t), \phi)$ contains a perfect $H$-tiling.
\end{define}

\begin{thm}[Almost perfect tiling framework]\label{thm:almtil}
Let $H$ be an ordered graph on $h$ vertices. 
Suppose that $B$ is a bottlegraph assigned to $H$.
Then for every $\eta>0$, there exists an $n_0\in \mathbb{N}$ so that every ordered graph $G$ on $n\geq n_0$ vertices with
\[
\delta(G)\geq \left(1 -\frac{1}{\chi_{cr}(B)}+\eta\right)n
\]
contains an $H$-tiling covering all but at most $\eta n$ vertices.
\end{thm}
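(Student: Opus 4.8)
The plan is to apply Szemer\'edi's regularity lemma to $G$ and run the ratio-preservation strategy outlined in Section~\ref{sec:expo}. First I would apply the (degree form of the) regularity lemma to $G$ with parameters chosen so that $\eps \ll d \ll \eta, 1/|B|$, obtaining a reduced graph $R$ on the cluster set $\{A_1,\dots,A_m\}$ (each of size $L := n/m$ up to the exceptional set) such that $\delta(R) \geq (1 - 1/\chi_{cr}(B) + \eta/2)m$, where every edge of $R$ corresponds to an $\eps$-regular pair of density at least $d$. By the definition of the critical chromatic number and a standard argument (essentially the observation that underlies the K\"uhn--Osthus lower bound), a graph with this minimum degree contains a copy of the blow-up $B(q)$ for any fixed $q$; more precisely, $R$ contains many vertex-disjoint copies of $B$, and in fact $R$ has a $B$-tiling covering all but $o(m)$ clusters. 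Actually what I want is slightly different: I want to partition (most of) the clusters into groups, each group inducing a copy of $B$ in $R$, and within each such group I will find the $H$-tiling.

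The heart of the argument is the following local claim: if $A_{i_1},\dots,A_{i_k}$ are clusters forming a copy of $B$ in $R$ (with $i_1<\dots<i_k$ as integers, so inducing a particular interval labeling $\phi$ of $B$ once we record which cluster plays which part), then $G[A_{i_1}\cup\dots\cup A_{i_k}]$ contains an $H$-tiling covering all but at most $\eps'$-fraction of its vertices. To prove this I use that $B$ is a bottlegraph: there is a $t=t(B,H,\phi)$ so that $(B(t),\phi)$ has a perfect $H$-tiling. I then greedily pull out vertex-disjoint ordered copies of $(B(t),\phi)$ from $G[A_{i_1}\cup\dots\cup A_{i_k}]$, each of which decomposes into copies of $H$. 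To find one ordered copy of $(B(t),\phi)$: use Lemma~\ref{lem:crop} (cited in Section~\ref{sec:almtil}) to pass to subclasses $S_{i_j}\subseteq A_{i_j}$ that are linearly separated in the order consistently with $\phi$, while remaining $2\eps$-regular of density at least $d/2$; then the standard embedding lemma (Lemma~\ref{keylem}) produces a copy of $B(t)$ inside, which since the $S$'s respect the ordering $\phi$ is automatically an ordered copy of $(B(t),\phi)$. After removing these $tk$ vertices (in the right proportions — each part of $B$ loses exactly $t$ vertices, so the ratios $|A_{i_j}\cap \text{remainder}|$ all drop uniformly), the pairs remain $4\eps$-regular of density at least $d/4$ as long as a $\sqrt{\eps}$-fraction of each cluster survives, so we can iterate. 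This covers all but a $\sqrt{\eps}$-fraction of $\bigcup A_{i_j}$ with copies of $H$.

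Assembling the pieces: discard the exceptional set from the regularity lemma (at most $\eps n$ vertices), then take a $B$-tiling of $R$ covering all but $o(m)$ clusters (possible since $\delta(R)$ exceeds the K\"uhn--Osthus threshold for a near-perfect $B$-tiling — here I am using that $\chi_{cr}(B) \geq \chi^*(B)$ always, so the minimum degree $(1-1/\chi_{cr}(B)+\eta)n$ certainly suffices to almost-tile $R$ with copies of $B$). Apply the local claim to each copy of $B$ in the tiling. The uncovered vertices are: those in unmatched clusters ($o(m)\cdot L = o(n)$), those in the exceptional set ($\eps n$), and within each matched group the $\sqrt{\eps}$-fraction left over ($\sqrt{\eps}n$ total). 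Choosing $\eps$ small enough in terms of $\eta$ makes the total at most $\eta n$, as required.

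\textbf{Main obstacle.} The delicate point — and the reason Section~\ref{sec:expo} exists — is that $\eps$-regularity of $(A_i,A_j)$ does \emph{not} by itself let one embed an ordered graph, because the labels of the vertices inside $A_i$ and $A_j$ may interleave arbitrarily in $[n]$, whereas a copy of $H$ demands the interval structure be respected. The resolution is exactly the cropping step: because $|A_i| \gg$ the number of copies of $H$ we need from this group, pigeonhole lets us find large subclasses that are genuinely separated in the linear order in \emph{some} order, and then the bottlegraph hypothesis was \emph{designed} to guarantee that whatever separated order $\phi$ we end up with, $(B(t),\phi)$ still perfectly $H$-tiles. So the conceptual work is front-loaded into the definition of bottlegraph; the remaining difficulty is bookkeeping — making sure the cropping, the regularity degradation across iterations, and the accounting of leftover vertices all fit together with a single choice of constants $\eps \ll d \ll \eta$.
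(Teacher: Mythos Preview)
Your approach is the paper's: regularity lemma, Koml\'os's almost-$B$-tiling of the reduced graph, crop to get separated subclasses, embed $(B(t),\phi)$ via the key lemma, iterate. But there is a real gap in the local claim. A copy of $B$ in $R$ uses $|B|$ clusters, one per \emph{vertex} of $B$. When you apply Lemma~\ref{lem:crop} to these $|B|$ clusters, the permutation $\sigma$ it returns is an arbitrary ordering of $V(B)$; there is no reason the vertices from the same part $U_i$ should land in consecutive positions. So the labeling you obtain is in general \emph{not} an interval labeling, and the bottlegraph hypothesis gives you nothing. (Your parenthetical ``so inducing a particular interval labeling $\phi$ once we record which cluster plays which part'' asserts exactly the thing that needs to be proved.) If instead you meant one cluster per \emph{part} of $B$, then $\phi$ is automatically an interval labeling, but now the ratio claim ``each part of $B$ loses exactly $t$ vertices'' is false: part $U_i$ loses $|U_i|\cdot t$ vertices from a cluster of fixed size $L$, so unbalanced $B$ (which is the whole point of $\chi_{cr}$) destroys the uniformity.

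The paper resolves both issues simultaneously with a step you omit: before cropping, it \emph{merges} all clusters lying in the same part $U_i$ of the $B$-copy into a single set $A_i$, using Propositions~\ref{prop:regspar} and~\ref{prop:regcomb} to keep the pairs $(A_i,A_j)$ $\eps_2$-regular. Lemma~\ref{lem:crop} is then applied to only $k$ sets (where $k$ is the number of parts of $B$), so whatever permutation it returns \emph{is} automatically an interval labeling of $B$. Moreover the merged $A_i$ has size $|U_i|\cdot L$, and removing one copy of $B(t)$ takes out $|U_i|\cdot t$ vertices, so every $A_i$ shrinks by the same factor $t/L$ and the iteration goes through. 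This merging step is not just bookkeeping --- it requires the two technical propositions on sparsifying and combining regular pairs, and without it neither the interval-labeling nor the ratio-preservation half of your argument stands. (Minor side note: the almost-$B$-tiling of $R$ is Koml\'os's theorem (Theorem~\ref{thm:kol}) at the $\chi_{cr}(B)$ threshold, not K\"uhn--Osthus.)
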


With Theorem~\ref{thm:almtil} at hand, in order to prove that all ordered graphs with a given minimum degree contain an almost perfect $H$-tiling, it is sufficient to show that certain `interval labelled' blow-ups of a specific graph $B$ with a given critical chromatic number contain perfect $H$-tilings. The latter statement usually can be verified easily by observation or by solving a linear optimization problem.
Thus, for the general perfect $H$-tiling problem,
the heart of the problem is to choose an ordered graph $B$ whose critical chromatic number is not too big (so that the corresponding minimum degree condition in Theorem~\ref{thm:almtil} is not too high) whilst ensuring $B$ is indeed a bottlegraph assigned to $H$.
These competing forces mean it is far from immediate what the correct choice of $B$ is given an arbitrary ordered graph $H$.
However, for a given class of ordered graphs $H$ (as we will see in the case when $\chi _< (H)=2$), there might be some intuitive ways to construct a `fairly good' or even optimal bottlegraph using their structural properties.

The proofs of both Theorems~\ref{thm:abs} and~\ref{thm:almtil} rely on Szemer\'edi's regularity
lemma, which will be  formally introduced in Section~\ref{sec:reg}.
We then prove Theorem~\ref{thm:almtil} in Section~\ref{sec:almtil}, and Theorem~\ref{thm:abs} in Section~\ref{sec:abs}.
 
\subsection{Graphs with interval chromatic number 2}\label{sec:promain}
In this section, we illustrate how to apply our general framework to prove Theorem~\ref{mainthm:intchro2}.  
The following key lemma gives a construction of the bottlegraph for graphs with interval chromatic number 2.
\begin{lemma}\label{lem:bottle}
Let $H$ be an ordered graph on $h$ vertices with $\chi_{<}(H)=2$. 
Recall that 
\[
\alpha^*(H)=\min\left\{\frac{\alpha^{+}_1}{h},\ \frac{h -\alpha^{-}_1+1}{h}\right\}.
\]
Then there exists a bottlegraph $B$ of $H$ such that $\chi_{cr}(B)=1/\alpha^*(H)$.
\end{lemma}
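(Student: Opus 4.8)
The plan is to write down an explicit candidate bottlegraph $B$ as a complete bipartite-like graph whose part sizes are tuned so that $\chi_{cr}(B)=1/\alpha^*(H)$, and then verify the bottlegraph property by checking that for each of the (finitely many) interval labelings, a suitable blow-up admits a perfect $H$-tiling. Since $\chi_{<}(H)=2$, write $a:=\alpha^{+}_1$ and $b:=h-\alpha^{-}_1+1$, so that $\alpha^*(H)=\min\{a/h,\,b/h\}$; by the symmetry of the definition (reversing the order), we may assume $\alpha^*(H)=a/h$, i.e.\ $a\le b$. The key structural fact I would use is that $H$ embeds naturally into an ordered blow-up of a two-vertex graph provided the two interval-classes receive enough room: specifically, if $X<Y$ are independent sets in the ordered blow-up with $|X|\ge a$ coming first and $|Y|\ge h-a$ coming second (and the bipartite graph between them complete), then $H$ embeds with its first $a$ vertices in $X$ and remaining $h-a$ vertices in $Y$ — because $[1,a]$ and $[a+1,h]$ are exactly the natural independent intervals, and all cross-edges are present. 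The only subtlety is which of the two classes comes first under a given labeling $\phi$; this is where $a\le b$ is used, because the "reverse" interval decomposition $[1,\alpha^{-}_1-1]$, $[\alpha^{-}_1,h]$ has first part of size $h-b\le h-a$, so $H$ also embeds with its first $h-b$ vertices in the \emph{second}(-coming) class — wait, I need to orient this carefully.

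Concretely: I would take $B$ to be the complete bipartite graph $K_{a,\,h-a}$ — two parts $U_1$ (size $a$) and $U_2$ (size $h-a$) — so that $\chi(B)=2$ and the smallest colour class has size $\sigma(B)=\min\{a,h-a\}=a$ (since $a\le h/2$, as $a\le b$ and $a+b\le$ ... actually $a\le h/2$ follows from $a\le b$ only if $a+b\le h$; one should instead note $\alpha^*\le 1/2$ always when $\chi_<=2$ unless $H$ has Property~A, but the lemma does not assume that — so more care is needed and possibly $B=K_{a,h-a}$ must be replaced by a graph on more than $h$ vertices, or one must allow $\sigma(B)=a$ regardless, giving $\chi_{cr}(B)=(2-1)\cdot|B|/(|B|-\sigma(B))=h/(h-a)$, which is \emph{not} $1/\alpha^*$). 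So the real construction must be different: I would instead take $B$ to be a complete bipartite graph with parts of sizes in ratio $\alpha^*:(1-\alpha^*)$, e.g.\ parts of sizes $a$ and $h-a$ is wrong; rather parts $U_1,U_2$ with $|U_1|/|V(B)|=\alpha^*$, forcing $\chi_{cr}(B)=(\chi(B)-1)|B|/(|B|-\sigma(B)) = |B|/(|B|-|U_1|) = 1/(1-|U_2|/|B|)$; to get $1/\alpha^*$ I need $|U_1|/|B|=\alpha^*$ \emph{and} $U_1$ to be the smaller class, i.e.\ $\sigma(B)=|U_1|$, which needs $\alpha^*\le 1/2$. When $\alpha^*>1/2$ (Property~A case), one instead uses $\chi_{cr}$ for a graph with $\sigma/|B| = 1-\alpha^*$, again a complete bipartite graph with class ratio $\alpha^*:(1-\alpha^*)$; either way $B=K_{p,q}$ with $p:q = \alpha^*:(1-\alpha^*)$ in lowest terms works, and $\chi_{cr}(K_{p,q})=(p+q)/\max\{p,q\}=1/\max\{\alpha^*,1-\alpha^*\}$ — so actually only the $\alpha^*\le 1/2$ case gives $1/\alpha^*$ directly, and in the Property~A case one must build $B$ with \emph{three} or more parts so that $\sigma(B)/|B|$ can be made equal to $\alpha^*>1/2$... no: $\sigma\le |B|/\chi(B)\le |B|/2$, so $\sigma/|B|\le 1/2<\alpha^*$ is impossible. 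Hence when $\alpha^*>1/2$ one genuinely cannot realize $1/\alpha^*$ as a critical chromatic number, and the lemma must be implicitly restricted, or $B$ is allowed to be edgeless-augmented; I would handle this by noting $\chi_{cr}$ of a graph with an isolated-vertex-free...

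**The main obstacle**, then, is pinning down the correct $B$: it is a complete bipartite graph $K_{p,q}$ (or a blow-up thereof) with $\min\{p,q\}/(p+q)=\min\{\alpha^*,1-\alpha^*\}$, and the substantive verification is the bottlegraph property — for every interval labeling $\phi$, the ordered blow-up $(B(t),\phi)$ contains a perfect $H$-tiling for suitable $t$. I would prove this by a greedy/packing argument: an interval labeling of $K_{p,q}$ orders the $p+q$ classes so that all $p$ copies of $U_1$ precede all $q$ copies of $U_2$ \emph{in some interleaving} — actually an interval labeling forces one part entirely before the other only if $\sigma$ separates them, but $\sigma$ ranges over all permutations, so the $U_1$-blocks and $U_2$-blocks may interleave arbitrarily. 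For each labeling I would partition the $t(p+q)$ vertices into consecutive groups, and within each "$U_1$-block before a $U_2$-block" adjacency pattern, place copies of $H$ using the $[1,a]/[a+1,h]$ decomposition when the block order matches, and the reverse decomposition $[1,\alpha_1^--1]/[\alpha_1^-,h]$ when it is reversed; a counting/LP argument (the "ratio preservation" idea flagged in Section~\ref{sec:expo}) shows that choosing $t$ divisible by a large enough integer lets these local tilings combine into a perfect tiling of the whole blow-up. I would set this up as a small linear program: variables are the number of copies of $H$ of each "type" (which block they sit in and which decomposition they use), constraints say each class is exactly covered, and feasibility over the rationals (hence over $t\mathbb{Z}$ for suitable $t$) follows from the ratio $p:q$ matching $\alpha^*:(1-\alpha^*)$. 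Establishing this LP feasibility uniformly over all interval labelings $\phi$ is the crux; everything else is bookkeeping.
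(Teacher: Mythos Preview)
Your proposal never lands on a correct bottlegraph, and the difficulty is exactly where your stream of consciousness stalls. For any bipartite $B=K_{p,q}$ one has $\chi_{cr}(B)=(p+q)/\max\{p,q\}\le 2$, so when $\alpha^*(H)\le 1/2$ (equivalently, when $H$ does \emph{not} have Property~A) no bipartite choice of $B$ can hit $\chi_{cr}(B)=1/\alpha^*\ge 2$. You compute this correctly but then draw the reversed conclusion (``in the Property~A case one must build $B$ with three or more parts'' --- Property~A is precisely the case $\alpha^*>1/2$, where $K_{h-p,\,p}$ \emph{does} work). The case that genuinely requires more than two parts is $\alpha^*\le 1/2$, and you never produce a candidate there.

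The missing construction is Euclidean division. Set $p:=\alpha^*h$ and (assuming by symmetry $p=\alpha_1^+$) write $h=ap+r$ with $a\ge 2$ and $0\le r<p$. Take $B$ to be the complete $(a{+}1)$-partite graph with one class $U_0$ of size $r$ and classes $U_1,\dots,U_a$ each of size $p$; then $|B|=h$ and a one-line check gives $\chi_{cr}(B)=h/p=1/\alpha^*$ whether $r=0$ or not. In the Property~A case one simply takes $B=K_{h-p,\,p}$, and since then $p\ge h-p\ge\max\{h-\alpha_1^+,\,\alpha_1^--1\}$, every interval labeling of $B$ already contains a spanning copy of $H$.

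Your verification plan also misreads the definition of interval labeling: each \emph{entire} part $U_i$ is mapped to an interval, and the intervals are ordered by the permutation $\sigma$; the parts never interleave. So for the multipartite $B$ above, one observes that if any $U_i$ with $i\ge 1$ precedes $U_0$ then $(B,\phi)$ already contains a (spanning, since $|B|=h$) copy of $H$; otherwise $U_0$ comes first, and one tiles $B(t)$ for $t=\mathrm{lcm}(p,r)/r$ using exactly two ``types'' of copies of $H$ --- one drawing $p$ vertices from each of $U'_0,\dots,U'_{a-1}$ and $r$ from $U'_a$, the other skipping $U'_0$ and drawing $p+r$ from $U'_a$. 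Counting with the lcm makes the bookkeeping exact, so no LP is needed.
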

\begin{proof}
Let $p:=\alpha^*h=\min\{\alpha^{+}_1,\ h - \alpha^{-}_1 + 1\}$. By the symmetry of the argument, without loss of generality we can assume that $p=\alpha^{+}_1$. 

We first assume that $H$ does not have Property~A. Then by the definition of $p$, we have $p\leq \lfloor h/2\rfloor$, and
\begin{equation}\label{sta:edge}
\text{all the edges of $H$ are between the intervals $[ p]$ and $[p+1,\ h]$. } 
\end{equation}
Let $a, r$ be the integers such that $h=ap+r$, where $a\geq 2$ and $0\leq r<p$.
Then we define $B$ to be the complete multipartite graph with classes $U_0, U_1, \ldots, U_a$, in which $|U_0|=r$, and $|U_1|=\ldots=|U_a|=p$. We will show that $B$ is a bottlegraph assigned to $H$.

Let $\phi$ be an interval labeling of $B$. If $r=0$, then by~\eqref{sta:edge}, $(B, \phi)$ immediately contains a copy of $H$ (i.e. $(B(1),\phi)$ contains a perfect $H$-tiling). So assume $r \not =0$.
If there exists $i\geq 1$ such that $\phi(U_i)<\phi(U_0)$, then again $(B,\phi)$ contains a copy of $H$. Therefore, without loss of generality, we can assume that $\phi(U_0)<\phi(U_1)<\ldots<\phi(U_a)$.

Let $c:=\mathrm{lcm}(p, r)$, the least common multiple of $p$ and $r$, and $t:=c/r$. 
Let $B':=B(t)$ and $U'_0, U'_1, \ldots ,U'_a$  be the partite sets of $B'$ (where $U'_i$ corresponds to $U_i$).
For a set $A\subseteq V(B')$ of size $h$, if $|A\cap U'_i|=p$, for all $0\leq i\leq a-1$ and $|A\cap U'_a|=r$, we say $A$ is a \textit{type I set}; if $|V(H)\cap U'_0|=0$, $|V(H)\cap U'_i|=p$, for all $1\leq i\leq a-1$ and $|V(H)\cap U'_a|=p+r$, we say $A$ is a \textit{type II set}.
Both type I and type II sets induce some complete multipartite graphs in $B'$, which contain a copy of $H$ by~\eqref{sta:edge}.
By the choice of $c$ and $t$, $V(B')$ can be partitioned into $t=c/r$ disjoint sets, where $c/p$ of them are of type I and $c/r - c/p$ of them are of type II; thus this ensures a perfect $H$-tiling in $B'$. So indeed $B$ is a bottlegraph of $H$. Moreover, it is easy to compute that $\chi_{cr}(B)=(a+1 - 1)\frac{h}{h-r}=a(h/ap)=h/p=1/\alpha^*(H)$.

Now we assume that $H$ has Property~A; then we have $\alpha^{-}_1\leq\lfloor h/2 \rfloor+1\leq \alpha^{+}_1$.
Observe that all the edges of $H$ are  between the intervals $[ \alpha^{-}_1 - 1]$ and $[\alpha^{+}_1+1,\ h]$.
 Let $r:=h-p$, and take $B:=K_{r, p}$.
 Note that $p\geq r=\max\{h-\alpha^{+}_1, \alpha^{-}_1 - 1\}$. Therefore, for any interval labeling $\phi$ of $B$, the ordered graph $(B,\ \phi)$ contains a copy of $H$; so $B$ is a bottlegraph assigned to $H$. Finally, we check that $\chi_{cr}(B)=h/p=1/\alpha^*(H)$. 
\end{proof}

Applying Theorem~\ref{thm:almtil} with Lemma~\ref{lem:bottle}, we immediately obtain a bound on the minimum degree that guarantees an almost perfect $H$-tiling for any $H$ with $\chi_<(H)=2$.
\begin{thm}\label{thm:almtil2}
Let $H$ be an ordered graph on $h$ vertices with $\chi_{<}(H)=2$. 
For every $\eta>0$, there exists an $n_0\in \mathbb{N}$ so that the following holds.
Every ordered graph $G$ on $n\geq n_0$ vertices with
\[
\delta(G)\geq \left(1 -\alpha^*(H)+\eta\right)n,
\]
contains an $H$-tiling covering all but at most $\eta n$ vertices.
\end{thm}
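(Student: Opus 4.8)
The plan is to obtain Theorem~\ref{thm:almtil2} as an immediate consequence of the general almost perfect tiling framework (Theorem~\ref{thm:almtil}) fed with the explicit bottlegraph produced in Lemma~\ref{lem:bottle}. Concretely, given an ordered graph $H$ with $\chi_{<}(H)=2$ and $\eta>0$, I would first apply Lemma~\ref{lem:bottle} to fix a bottlegraph $B$ assigned to $H$ with $\chi_{cr}(B)=1/\alpha^*(H)$. I would then invoke Theorem~\ref{thm:almtil} with this $B$ and the same $\eta$ to extract the required $n_0\in\mathbb{N}$: since
\[
1-\frac{1}{\chi_{cr}(B)}=1-\alpha^*(H),
\]
every ordered graph $G$ on $n\geq n_0$ vertices with $\delta(G)\geq(1-\alpha^*(H)+\eta)n$ contains an $H$-tiling covering all but at most $\eta n$ of its vertices, which is exactly the assertion.

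A few bookkeeping points I would double-check. First, Lemma~\ref{lem:bottle} requires $\chi_{<}(H)=2$, which is precisely our hypothesis, so there is no gap there; its proof itself splits into the case where $H$ fails Property~A (using a complete $(a+1)$-partite $B$ coming from the division $h=ap+r$ with $p=\alpha^*(H)h$) and the case where $H$ has Property~A (using $B=K_{r,p}$ with $r=h-p$), but for the present deduction I would treat that construction as a black box. Second, the threshold $n_0$ delivered by Theorem~\ref{thm:almtil} depends only on $B$ and $\eta$, and $B$ depends only on $H$, so $n_0$ is a legitimate function of $H$ and $\eta$ as demanded by the statement.

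I do not expect a genuine obstacle in this step: the real work has been pushed upstream, into Lemma~\ref{lem:bottle} (a short arithmetic/combinatorial verification that the relevant interval-labelled blow-ups admit perfect $H$-tilings) and into Theorem~\ref{thm:almtil}, where the regularity method together with the ratio-preservation idea sketched in Section~\ref{sec:expo} does the heavy lifting. So the proof proposal here is simply: apply Lemma~\ref{lem:bottle}, then apply Theorem~\ref{thm:almtil}, and read off the minimum degree bound.
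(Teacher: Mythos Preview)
Your proposal is correct and matches the paper's approach exactly: the paper states that Theorem~\ref{thm:almtil2} follows immediately by applying Theorem~\ref{thm:almtil} with the bottlegraph supplied by Lemma~\ref{lem:bottle}, which has $\chi_{cr}(B)=1/\alpha^*(H)$. There is nothing further to add.
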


\noindent\textbf{Proof of Theorem~\ref{mainthm:intchro2}(i)--(iii).}
Our desired lower bounds on $\delta_<(H, n)$ follow immediately from the extremal examples in Section~\ref{sec:exteg}.
More specifically, the lower bound in (i) is given by the space barriers, i.e. Proposition~\ref{prop:spabarr}; the lower bound in (ii) is given by the divisibility barriers, i.e. Proposition~\ref{prop:divbarr}; the lower bound in (iii) is given by the local barriers, i.e. Proposition~\ref{prop:locbarr}.

For an arbitrary small constant $0<\eta<1$, let $\nu$ be defined as in Theorem~\ref{thm:abs}, and fix an additional constant $\eta'$ satisfying the following:
\begin{equation}\label{ass:para}
0<\eta ' \ll \nu \ll \eta.
\end{equation}
Let $n$ be a sufficiently large integer  divisible by $h$.

Recall that an ordered graph $H$ has Property A if and only if $\alpha^*(H)> 1/2$.
Then $\min\{\alpha^*(H), 1/2\}$ is equal to $\alpha^*(H)$ in Case~(i), and $1/2$ in Cases~(ii) and (iii).
Therefore, for the rest of the proof, it is sufficient to show that every ordered graph $G$ on $n$ vertices with
\[
\delta(G)\geq \left(1 - \min\{\alpha^*(H), 1/2\}+\eta\right)n
\]
contains a perfect $H$-tiling.

First of all, by Theorem~\ref{thm:abs}, there exists  an $H$-absorbing set $Abs$ so that
\begin{itemize}
\item $|Abs|\leq \nu n$;
\item $Abs$ is an $H$-absorbing set for any $W \subseteq V(G) \setminus Abs$ such that $|W| \in h \mathbb N$ and  $|W|\leq \nu ^3 n $.
\end{itemize}
Set $G':=G \setminus Abs$. 
Thus \eqref{ass:para} implies that $\delta(G')\geq \left(1 - \min\{\alpha^*(H), 1/2\}+\eta'\right)|G'|$. So by Theorem~\ref{thm:almtil2}, $G'$ contains an $H$-tiling $\mathcal{H}_1$ covering all but a set $W$ of vertices with $|W|\leq \eta' n \leq \nu^3 n$. By the definition of the $H$-absorbing set, $G[W\cup Abs]$ contains a perfect $H$-tiling $\mathcal{H}_2$. Then  $\mathcal{H}_1\cup \mathcal H_2$ is a perfect $H$-tiling of $G$.
\qed
\newline

The proof of Theorem~\ref{mainthm:intchro2}(iv) is similar but requires a stronger version of the absorbing theorem.
\begin{thm}\label{thm:abs2}
Let $H$ be an $h$-vertex ordered graph with $\chi_<(H)=2$.
Suppose that $H$ has Property~A but not Property~B, and neither of the vertices $1, h$ has Property~C.
Then for every $\eta >0$, there exists an $n_0\in \mathbb N$ and $\nu >0$ so that the following holds.
Suppose that $G$ is an $n$-vertex ordered graph  where $n\geq n_0$ and where
$$\delta (G) \geq  \eta n.$$
Then $V(G)$ contains a set $Abs$ so that
\begin{itemize}
\item $|Abs|\leq \nu n$;
\item $Abs$ is an $H$-absorbing set for any $W \subseteq V(G) \setminus Abs$ such that $|W| \in h \mathbb N$ and  $|W|\leq \nu ^3 n $.
\end{itemize}
\end{thm}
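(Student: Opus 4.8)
\textbf{Proof proposal for Theorem~\ref{thm:abs2}.}

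The plan is to follow the standard absorbing-method template, but to exploit the structural hypotheses on $H$ (Property~A, failure of Property~B, failure of Property~C for both $1$ and $h$) to build absorbing gadgets that only need a linear — rather than a $(1-1/2+\eta)n$ — minimum degree. The key object will be, for each pair of vertices $u,v\in V(G)$ (or more generally each small ``target'' tuple), an \emph{$H$-absorbing structure} $A_{uv}$ of bounded size $b=b(H)$ such that $G[A_{uv}]$ has a perfect $H$-tiling and $G[A_{uv}\cup\{u,v\}]$ (or $G[A_{uv}\cup W]$ for the appropriate small $W$) also has one. First I would show that, under $\delta(G)\ge\eta n$, every such target has \emph{many} — at least $\gamma n^{b}$ for some $\gamma=\gamma(\eta,H)>0$ — absorbing structures; then a standard probabilistic argument (choose each potential absorber independently with probability $p\approx \nu n^{1-b}$, delete overlaps and defective ones) produces a single set $Abs$ with $|Abs|\le\nu n$ that $H$-absorbs every sufficiently small $W$ with $|W|\in h\mathbb{N}$. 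The absorption of a large $W$ from a bounded-size gadget budget is handled in the usual way: partition $W$ into constantly many vertices per absorber, or chain absorbers, using that $\nu^3 n$ vertices can be distributed among $\Theta(\nu n)$ disjoint gadgets each absorbing $O(1)$ vertices.

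The heart of the matter, and where the structural hypotheses enter, is constructing \emph{one} absorbing gadget for a single vertex $v$ (absorbing larger $W$ then follows by iterating or by a direct ``path'' construction) using only linear minimum degree. Here is the intuition. Since $H$ has Property~A, both $1$ and $\lfloor h/2\rfloor+1$, and symmetrically $h$ and $\lceil h/2\rceil$, lie in long independent intervals; since neither $1$ nor $h$ has Property~C, the neighbourhoods of the extreme vertices are confined to the ``far'' side in a way that forbids the local barrier; since Property~B fails, there is a split $[1,i],[i+1,h]$ of $[h]$ with no crossing edge, so $H$ is a disjoint union (as an ordered graph) of two ordered graphs $H_1$ on $[i]$ and $H_2$ on $[i+1,h]$. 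This decomposition means a copy of $H$ in $G$ can be assembled from a copy of $H_1$ entirely to the left of a copy of $H_2$ — the two pieces do not need to see each other. Exploiting this, a candidate absorber for a vertex $v$ consists of a small set $L$ (to the left of $v$, or containing $v$ as an interior independent vertex) together with a small set $R$ (to the right), arranged so that: (a) $G[L\cup R]$ hosts a perfect $H$-tiling in which $v$'s ``slot'' is filled by some dummy vertex; and (b) $G[L\cup R\cup\{v\}]$ hosts one in which $v$ occupies the independent-interval slot made available by Property~A and the displaced dummy vertex is reabsorbed into the other $H_i$-piece, which is possible precisely because Property~B fails (so the two halves are independent) and Property~C fails (so no extreme-vertex obstruction arises from the ordering). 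Counting such $(L,R)$ pairs: each vertex of $L\cup R$ is chosen greedily inside a common neighbourhood of the constantly-many previously chosen vertices, each such common neighbourhood has size at least $\eta n - O(1)\cdot n = \Omega(n)$ once we first reserve a linear ``pool'' on each side of $v$, giving the required $\gamma n^{b}$ count.

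I expect the main obstacle to be (b): verifying that a single bounded gadget can \emph{simultaneously} tile correctly with and without $v$, with the vertex ordering respected in both configurations. The ordering is the new difficulty relative to the unordered absorbing method — one must be careful that the dummy/displaced vertex can be slotted into $H_1$ or $H_2$ in a position consistent with the global left-to-right order, which is exactly what Property~A (room in the middle) together with the failure of Property~B (the two halves of $H$ are order-independent) is designed to guarantee, while the failure of Property~C for both $1$ and $h$ rules out the boundary obstruction that would otherwise force a $(1/2+o(1))n$ threshold. A secondary technical point is ensuring the reserved pools on the left and right of $v$ are genuinely available for \emph{every} $v$ simultaneously during the probabilistic selection; this is handled by noting $|Abs|$ stays $\le\nu n$, so deleting $Abs$ and a tiny leftover set never eats more than an $o(1)$ fraction from any vertex's linear-sized neighbourhood, and by a union bound / second-moment concentration over the random choice. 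Once the single-gadget construction and its count are in place, assembling $Abs$ and proving it absorbs all admissible $W$ is routine and parallels the argument used for Theorem~\ref{thm:abs}.
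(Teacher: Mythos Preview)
Your overall template---verify the Lo--Markstr\"om ``many exchangers for every pair $x,y$'' hypothesis, then apply the standard random-selection lemma (Lemma~\ref{lo})---matches the paper's. The structural reading of ``not Property~B'' as a decomposition $H=H_1\cup H_2$ with no crossing edges is also correct, though the paper extracts a sharper consequence: together with $\chi_<(H)=2$ it forces one of $1,h$ to be isolated, and the failure of Property~C then makes $[s(H),h-1]$ independent in the remaining case. That finer structural dichotomy is what feeds into the paper's two technical lemmas (Lemmas~\ref{lem:conn1} and~\ref{lem:conn2}).

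The genuine gap is your counting step. You write that each vertex of $L\cup R$ is ``chosen greedily inside a common neighbourhood of the constantly-many previously chosen vertices, each such common neighbourhood has size at least $\eta n - O(1)\cdot n = \Omega(n)$''. This is false when $\eta$ is small: with only $\delta(G)\ge\eta n$, two vertices need not share \emph{any} common neighbour, let alone $\Omega(n)$ of them---the inclusion--exclusion bound $k\eta n-(k-1)n$ is negative. Greedy embedding into iterated common neighbourhoods is precisely the step that requires $\delta(G)\ge(1-1/r+\eta)n$ in the usual absorbing argument (Theorem~\ref{thm:abs}), and it collapses here. Without it you cannot produce $\gamma n^{b}$ gadgets, so the probabilistic selection has nothing to select from.

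The paper avoids common neighbourhoods entirely. Given $x<y$, it partitions $[n]$ into the three intervals determined by $x$ and $y$ (or, in the harder sub-case, reserves a short tail interval $V_0$ and works with $N(y)\cap(x,y)$). Pigeonhole yields $\Omega(\eta^2 n^2)$ edges inside some $G[V_i,V_j]$; a routine regularity application then produces $\Omega(n^{h-1})$ copies of a fixed complete bipartite ordered graph $H_0$ with prescribed part sizes inside that block. The structural hypotheses on $H$ are used only to check that any such $H_0$, lying entirely to one side of $x$ and $y$ (or between them), hosts a copy of $H$ when either $x$ or $y$ is inserted into one of the long independent slots guaranteed by Property~A and the failure of Property~C. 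In the sub-case where $h$ is not isolated, a short chaining argument through an auxiliary vertex $z$ is needed (Claims~\ref{claim:case1} and~\ref{claim:case2}). None of this requires intersecting neighbourhoods; it only needs $\Omega(n^2)$ edges somewhere, which $\delta(G)\ge\eta n$ does provide.
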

The proof of Theorem~\ref{thm:abs2} contains some technical arguments; we postpone it to Section~\ref{sec:abs}.
\newline

\noindent\textbf{Proof of Theorem~\ref{mainthm:intchro2}(iv).} The upper bound on $\delta_<(H, n)$ follows similarly from Theorems~\ref{thm:almtil2} and~\ref{thm:abs2}, while the lower bound is given by the space barriers, i.e. Proposition~\ref{prop:spabarr}.
\qed

\section{The regularity lemma and related tools}\label{sec:reg}
In the proof of our main results we will use Szemer\'edi's regularity
lemma~\cite{sze}.
In this section we will introduce all the information we require about this
result.
We first introduce some notation.
The \emph{density} of a bipartite graph with vertex classes~$A$ and~$B$ is
defined to be
$$d(A,B):=\frac{e(A,B)}{|A||B|}.$$
Given $\eps>0$, a graph $G$ and two disjoint sets $A, B\subset V(G)$, we say that the pair $(A, B)_G$ (or simply $(A, B)$ when the underlying graph is clear) is \emph{$\eps$-regular} if for all sets
$X \subseteq A$ and $Y \subseteq B$ with $|X|\ge \eps |A|$ and
$|Y|\ge \eps |B|$, we have $|d(A,B)-d(X,Y)|< \eps$. 
Given $d\in [0, 1)$, the pair $(A, B)_G$ is \emph{$(\eps,d)$-regular} if $G$ is $\eps$-regular, and $d(A,B)\geq d$.

We now collect together some useful properties of $\eps$-regular pairs.
\begin{prop}\label{prop:regspar}
For $0<\eps\ll d_2<d_1\leq 1$, there exists an integer $K=K(\eps, d_2, d_1)$ such that the following holds.
Let  $(A, B)_G$ be an $\eps$-regular pair of density $d_1$ in a graph $G$ where $|A|, |B|\geq K$. 
Then there exists a spanning subgraph $G'\subseteq G$ such that $(A, B)_{G'}$ is a $\sqrt{\eps}$-regular pair of density $d$, where $|d-d_2|\leq \eps$.
\end{prop}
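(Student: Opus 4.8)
The plan is to prove Proposition~\ref{prop:regspar} by a simple probabilistic deletion argument: starting from the $\eps$-regular pair $(A,B)_G$ of density $d_1$, we delete each edge of $G[A,B]$ independently with probability $q:=1-d_2/d_1$ (so that the expected surviving density is $d_2$), and show that with positive probability the resulting bipartite graph $G'[A,B]$ is both $\sqrt{\eps}$-regular and has density within $\eps$ of $d_2$. Then we keep $G'$ on all of $V(G)$ by retaining every edge of $G$ that does not lie inside $A\times B$.

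First I would record why the target density is attainable: since $0<\eps\ll d_2<d_1\le 1$, the deletion probability $q\in(0,1)$ is well-defined, and $\mathbb{E}[e_{G'}(A,B)] = d_1|A||B|\cdot(d_2/d_1)=d_2|A||B|$. A Chernoff bound gives that with probability at least, say, $1-|A||B|^{-1}\cdot\exp(-c\eps^2|A||B|)$ (which tends to $1$ as $K\to\infty$), the overall density $d(A,B)_{G'}$ differs from $d_2$ by less than $\eps$. Next, for $\sqrt{\eps}$-regularity I would take any $X\subseteq A$, $Y\subseteq B$ with $|X|\ge\sqrt{\eps}|A|$ and $|Y|\ge\sqrt{\eps}|B|$; since $\sqrt{\eps}\ge\eps$, the pair $(X,Y)$ satisfies $|X|\ge\eps|A|$, $|Y|\ge\eps|B|$, so $\eps$-regularity of $(A,B)_G$ gives $|d_G(X,Y)-d_1|<\eps$. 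Conditioned on $e_G(X,Y)$, the number of surviving edges $e_{G'}(X,Y)$ is a sum of $e_G(X,Y)\ge \eps|A||B|\ge \eps K^2$ independent Bernoulli$(d_2/d_1)$ variables, so another Chernoff bound shows $|d_{G'}(X,Y) - (d_2/d_1)d_G(X,Y)| < \eps^2$, say, except with probability at most $\exp(-c'\eps^4 K^2)$. Combining with $|d_G(X,Y)-d_1|<\eps$ yields $|d_{G'}(X,Y) - d_2| < (d_2/d_1)\eps + \eps^2 + O(\eps) < \sqrt{\eps}$ for $\eps$ small enough.

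The main obstacle — and the only point requiring genuine care — is the union bound over all pairs $(X,Y)$ in the regularity condition: there are at most $2^{|A|}2^{|B|}\le 4^n$ such pairs, but this is harmless only if the per-pair failure probability $\exp(-c'\eps^4 |X||Y|)$ beats $4^{-(|A|+|B|)}$. Since every relevant pair has $|X||Y|\ge \sqrt{\eps}|A|\cdot\sqrt{\eps}|B| = \eps|A||B|$, and $|A|,|B|\ge K$, the failure probability is at most $\exp(-c'\eps^5 K^2)$ — wait, more carefully $\exp(-c'\eps^4\cdot \eps|A||B|)$, which we need $\le 4^{-|A|-|B|}/(\text{something})$; this holds once $K$ (and hence $|A|,|B|$) is large enough that $c'\eps^5 |A||B| \ge 2(|A|+|B|)\ln 4 + \ln 2$, e.g.\ $K \ge K(\eps,d_2,d_1)$ chosen appropriately. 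Thus I would fix $K$ large enough for all these Chernoff and union-bound inequalities to go through simultaneously, conclude that with positive probability $G'$ has the required properties, and note that the density estimate $|d(A,B)_{G'} - d_2|\le\eps$ is exactly the claimed $|d-d_2|\le\eps$.

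Alternatively — and this avoids Chernoff bounds entirely if one prefers a cleaner exposition — one can give a greedy/deterministic argument: repeatedly delete edges one at a time, each time choosing an edge so as not to destroy regularity, until the density drops to within $\eps$ of $d_2$; but controlling regularity under adversarial single-edge deletions is actually more delicate than the probabilistic approach, so I would stick with the deletion method above. A final remark: the hypothesis $\eps\ll d_2$ is what guarantees $d_2/d_1$ is bounded away from $0$, so that $e_G(X,Y)\approx d_1|X||Y|$ edges survive in expectation with a non-negligible fraction intact, keeping the concentration tight; this is where the hierarchy $0<\eps\ll d_2<d_1\le 1$ is genuinely used.
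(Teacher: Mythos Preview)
Your proposal is correct and follows essentially the same approach as the paper's own proof sketch: retain each edge of $G[A,B]$ independently with probability $d_2/d_1$, then apply Chernoff's bound together with a union bound over the at most $2^{|A|+|B|}$ relevant pairs $(X,Y)$. You supply more detail than the paper does (in particular you work through the union-bound arithmetic explicitly), but the underlying idea is identical.
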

{\noindent \bf Proof sketch.}
It suffices to consider the case when 
 $d_1-d_2\geq \eps$ (otherwise we set $G':=G$). 
 Let $G'$  be the  graph obtained from $G$ by retaining each edge with probability $p:=d_2/d_1$, independently of all other edges.
 Then $\mathbb{E}(d_{G'}(A, B))=pd_1=d_2$.
 
 Further, for every $X \subseteq A$ and $Y \subseteq B$ such that $|X|\geq \eps |A|$ and $|Y|\geq \eps |B|$, we have that
 $$
 \mathbb{E} (e_{G'} (X,Y))=\frac{d_2}{d_1} e_G (X,Y) \in \left ( (d_2-{\eps}/{d_1})|X||Y|, (d_2+{\eps}/{d_1})|X||Y|\right ).
 $$
 Noting that there are at most $2^{|A|+|B|}$ such pairs $X,Y$, we may repeatedly apply
  Chernoff's bound to ensure with high probability the conclusion of the proposition holds.
\qed

\begin{prop}\label{prop:regcomb}
For $0<\eps\ll d_2, d_1\leq 1/2$ with $|d_1- d_2|\leq \eps$, let $(A, B_1)_G$ and $(A, B_2)_G$ be $\eps$-regular pairs of density $d_1$ and $d_2$ respectively in a graph $G$ where $B_1$ and $B_2$ are disjoint. Then $(A, B_1\cup B_2)_G$ is a $(\sqrt{\eps}, \min\{d_1, d_2\})$-regular pair.
\end{prop}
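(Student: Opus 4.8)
\textbf{Proof plan for Proposition~\ref{prop:regcomb}.}
The plan is to verify the two defining conditions of an $(\sqrt{\eps},\min\{d_1,d_2\})$-regular pair directly. The density bound is immediate: since $B_1$ and $B_2$ are disjoint,
\[
d(A, B_1\cup B_2)=\frac{e(A,B_1)+e(A,B_2)}{|A|(|B_1|+|B_2|)}
\]
is a weighted average of $d(A,B_1)=d_1$ and $d(A,B_2)=d_2$, hence at least $\min\{d_1,d_2\}$. So the main task is to check $\sqrt{\eps}$-regularity of the pair $(A, B_1\cup B_2)$.

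For the regularity condition, I would take arbitrary $X\subseteq A$ and $Y\subseteq B_1\cup B_2$ with $|X|\ge \sqrt{\eps}|A|$ and $|Y|\ge\sqrt{\eps}(|B_1|+|B_2|)$, and set $Y_i:=Y\cap B_i$ for $i=1,2$. The first step is a case split according to whether each $Y_i$ is ``large'' relative to $B_i$, say whether $|Y_i|\ge \eps|B_i|$. If both $Y_i$ are large, then $\eps$-regularity of each pair $(A,B_i)$ gives $|d(X,Y_i)-d_i|<\eps$ for $i=1,2$; since $d(X,Y)$ is the weighted average of $d(X,Y_1)$ and $d(X,Y_2)$ (weights $|Y_1|/|Y|$, $|Y_2|/|Y|$), and $d(A,B_1\cup B_2)$ is the weighted average of $d_1,d_2$ with weights $|B_1|/(|B_1|+|B_2|)$, $|B_2|/(|B_1|+|B_2|)$, one combines $|d_1-d_2|\le\eps$ with the two regularity estimates to bound $|d(X,Y)-d(A,B_1\cup B_2)|$ by a small multiple of $\eps$, certainly less than $\sqrt{\eps}$ once $\eps$ is small. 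The one subtlety here is that the two weightings are not identical, but both $d_1$ and $d_2$ lie within $\eps$ of each other, so the mismatch in weights costs only an extra $\eps$ term.

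The remaining step handles the case where some $Y_i$ is small. Suppose $|Y_1|<\eps|B_1|$ (the other case is symmetric). Then $|Y_1|<\eps|B_1|\le\eps(|B_1|+|B_2|)\le\sqrt{\eps}|Y|\cdot(\sqrt{\eps})$\,---\,more cleanly, $|Y_1|\le\eps(|B_1|+|B_2|)\le\sqrt{\eps}\,|Y|$ since $|Y|\ge\sqrt{\eps}(|B_1|+|B_2|)$. Hence $Y_2=Y\setminus Y_1$ satisfies $|Y_2|\ge(1-\sqrt{\eps})|Y|\ge(1-\sqrt{\eps})\sqrt{\eps}(|B_1|+|B_2|)\ge\eps|B_2|$ for $\eps$ small, so $(A,B_2)$-regularity applies to give $|d(X,Y_2)-d_2|<\eps$. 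Now $d(X,Y)$ differs from $d(X,Y_2)$ by at most $O(|Y_1|/|Y|)\le O(\sqrt{\eps})$ (edges into $Y_1$ contribute at most $|X||Y_1|$ to the count and change the denominator by a $(1\pm\sqrt\eps)$ factor), and $d(A,B_1\cup B_2)$ differs from $d_2$ by at most $O(|B_1|/(|B_1|+|B_2|))$; but this last quantity need not be small. So instead I would compare both $d(X,Y)$ and $d(A,B_1\cup B_2)$ to the ``restricted'' densities $d(X,Y_2)$ and $d(A\cap X\text{-free version})$\,---\,actually the clean route is: bound $|d(X,Y)-d_2|\le|d(X,Y)-d(X,Y_2)|+|d(X,Y_2)-d_2|\le O(\sqrt\eps)$, and separately note $Y\subseteq B_1\cup B_2$ with $|Y\cap B_2|\ge\eps|B_2|$ forces, via regularity of $(A,B_2)$ and the average structure, $d(A,B_1\cup B_2)\in[d_2-\eps-o(1),\dots]$; more simply, since $|Y|\ge\sqrt\eps(|B_1|+|B_2|)$ and $|Y_1|\le\eps(|B_1|+|B_2|)$, the set $Y$ is itself an admissible witness forcing $|B_1|$ to be comparably small is false\,---\,so the correct final move is to apply the already-established estimate $|d(X,Y)-d_2|\le O(\sqrt\eps)$ together with the \emph{definition}: we only need $|d(X,Y)-d(A,B_1\cup B_2)|<\sqrt\eps$, and since for the full sets $|B_i|\ge\eps|B_i|$ trivially, the ``both large'' analysis applies to $X=A$, $Y=B_1\cup B_2$ giving nothing new; hence one shows directly $d(A,B_1\cup B_2)$ and $d(X,Y)$ are both within $O(\sqrt\eps)$ of $d_2$ by symmetric reasoning, completing the bound. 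I expect this small-$Y_i$ bookkeeping to be the main obstacle: one must be careful that a witness $(X,Y)$ with $Y$ heavily concentrated in one part does not see the ``wrong'' density, and the resolution is precisely that concentration on the small side is impossible given $|Y|\ge\sqrt\eps(|B_1|+|B_2|)$ unless $|B_1|$ itself is a $\sqrt\eps$-fraction, in which case $d(A,B_1\cup B_2)$ is within $\sqrt\eps$ of $d_2$ anyway. Choosing the threshold $\eps$ versus $\sqrt\eps$ carefully in the hypotheses ($0<\eps\ll d_1,d_2$) makes all the $O(\sqrt\eps)$ terms fit under $\sqrt\eps$.
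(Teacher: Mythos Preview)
Your plan is correct and matches the paper's proof: the same case split on whether each $|Y_i|\ge \eps|B_i|$, with the key observation in the ``one small'' case that $|Y_i|<\eps|B_i|$ forces $|Y_i|/|Y|\le\sqrt{\eps}$. The paper carries out the one-small case via a direct two-line density estimate for $e(X,Y)/(|X||Y|)-d(A,B_1\cup B_2)$ rather than triangulating through $d_2$, which keeps the constant exactly at $\sqrt{\eps}$ (using $d_i\le 1/2$), but the argument is the same; your meandering in that case can be replaced by that direct computation once you sit down to write it.
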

\begin{proof}
Let $X \subseteq A$ and $Y \subseteq B_1\cup B_2$ where $|X|\geq \sqrt{\eps}|A|$ and $|Y|\geq \sqrt{\eps}(|B_1|+|B_2|)$. Let $Y_1:=Y\cap B_1$ and $Y_2:=Y\cap B_2$. 
If both $|Y_1|\geq \eps|B_1|$ and $|Y_2|\geq \eps |B_2|$ then it is easy to check the pair $X,Y$ satisfies the condition in the definition of a $\sqrt{\eps}$-regular pair. So without loss of generality it suffices to check the case when
 $|Y_1|\geq \eps|B_1|$ and $|Y_2|\leq \eps |B_2|$.  In this case $|Y_2|/|Y|\leq \sqrt{\eps}$.
Thus,
\[
\frac{e(X, Y)}{|X||Y|}-d(A, B_1\cup B_2)\geq \frac{(d_1 - \eps)(|Y|-|Y_2|)}{|Y|} - \max\{d_1, d_2\}
\geq (d_1 - \eps)- (d_1 - \eps)\sqrt{\eps} - (d_1+\eps)\geq -\sqrt{\eps},
\]
and 
\[
\frac{e(X, Y)}{|X||Y|}-d(A, B_1\cup B_2)\leq \frac{(d_1 + \eps)(|Y|-|Y_2|) + |Y_2|}{|Y|} - \min\{d_1, d_2\}
\leq (d_1 + \eps)+ (1-d_1 - \eps)\sqrt{\eps} - (d_1-\eps)\leq \sqrt{\eps}.
\]
This proves that the pair $X,Y$ satisfies the condition in the definition of a $\sqrt{\eps}$-regular pair.
\end{proof}
We will also make use of the following well-known property of regular pairs (see e.g., \cite[Fact 1.5]{KomlosSimonovits}).
\begin{lemma}[Slicing lemma]\label{lemma:regslic}
Let $(A, B)$ be an $\eps$-regular pair of density $d$, and for some $\alpha>\eps$, let $A'\subseteq A$, $B'\subseteq B$ with $|A'|\geq \alpha |A|$ and $|B'|\geq \alpha |B|$. Then $(A', B')$ is $(\eps', d-\eps)$-regular with $\eps':=\max\{\eps/\alpha, 2\eps\}$. \qed
\end{lemma}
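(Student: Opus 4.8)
The final statement is the Slicing Lemma (Lemma~\ref{lemma:regslic}). The plan is to verify directly from the definition of $\eps$-regularity that the restricted pair $(A',B')$ inherits regularity with the claimed parameters. First I would record the trivial observation that for \emph{any} sets $X\subseteq A'$, $Y\subseteq B'$ with $|X|\geq\eps' |A'|$ and $|Y|\geq\eps'|B'|$, the lower bounds on $|A'|$ and $|B'|$ give $|X|\geq\eps' |A'|\geq\eps'\alpha|A|$ and $|Y|\geq\eps'\alpha|B|$; since $\eps'\geq\eps/\alpha$ we get $|X|\geq\eps|A|$ and $|Y|\geq\eps|B|$. Hence the pair $(X,Y)$ is an admissible test pair for the $\eps$-regularity of the \emph{original} pair $(A,B)$, so $|d(X,Y)-d(A,B)|<\eps$.

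The only remaining point is to compare $d(A',B')$ with $d(A,B)$, and here I would apply the same inequality to the test pair $X=A'$, $Y=B'$ itself: this is legitimate because $|A'|\geq\alpha|A|\geq\eps|A|$ (using $\alpha>\eps$) and likewise $|B'|\geq\eps|B|$. Therefore $|d(A',B')-d(A,B)|<\eps$, which in particular yields $d(A',B')>d(A,B)-\eps=d-\eps$, giving the claimed density lower bound. Combining, for any admissible $X,Y$ we obtain
\[
|d(X,Y)-d(A',B')|\leq |d(X,Y)-d(A,B)|+|d(A,B)-d(A',B')|<2\eps\leq\eps',
\]
since $\eps'\geq 2\eps$. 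This is exactly the $\eps'$-regularity condition for $(A',B')$, and together with $d(A',B')>d-\eps$ it shows $(A',B')$ is $(\eps',d-\eps)$-regular, completing the proof.

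There is essentially no obstacle here: the lemma is standard and the argument is a two-line unwinding of the definitions, the only mild subtlety being to keep track of why $\eps'$ must be at least both $\eps/\alpha$ (so that rescaled test sets remain large enough in $A,B$) and $2\eps$ (so that the triangle-inequality slack is absorbed). Since the paper states this as a well-known fact and marks it with \qed in the statement line, I would expect the authors to omit the proof entirely or give at most the brief verification sketched above; I would include the short version for completeness.

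\begin{proof}
Let $\eps':=\max\{\eps/\alpha,2\eps\}$. Since $\alpha>\eps$, we have $|A'|\geq\alpha|A|>\eps|A|$ and $|B'|\geq\alpha|B|>\eps|B|$, so $A',B'$ form an admissible test pair for the $\eps$-regularity of $(A,B)$; hence $|d(A',B')-d(A,B)|<\eps$, and in particular $d(A',B')>d-\eps$. Now take any $X\subseteq A'$ and $Y\subseteq B'$ with $|X|\geq\eps'|A'|$ and $|Y|\geq\eps'|B'|$. Then $|X|\geq\eps'|A'|\geq (\eps/\alpha)\cdot\alpha|A|=\eps|A|$ and similarly $|Y|\geq\eps|B|$, so $|d(X,Y)-d(A,B)|<\eps$. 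Therefore
\[
|d(X,Y)-d(A',B')|\leq|d(X,Y)-d(A,B)|+|d(A,B)-d(A',B')|<2\eps\leq\eps'.
\]
Thus $(A',B')$ is $\eps'$-regular, and since $d(A',B')>d-\eps$ it is in fact $(\eps',d-\eps)$-regular.
\end{proof}
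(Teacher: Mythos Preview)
Your proof is correct and is exactly the standard two-line verification of this well-known lemma. The paper itself gives no proof at all (the \qed\ appears in the statement line), so your anticipation that the authors would omit it was right; your short argument is the canonical one.
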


We will apply the following degree form of  Szemer\'edi's regularity lemma~\cite{sze}. 
\begin{lemma}[Regularity lemma] \label{reglem} For every $\varepsilon > 0$ and $\ell_0 \in \mathbb{N}$ there exists $L_0 = L_0(\varepsilon, \ell_0)$ such that for every $d \in [0, 1]$ and for every graph $G$ on $n \geq L_0$ vertices there exists a partition $V_0, V_1, \ldots, V_\ell$ of $V(G)$ and a spanning subgraph $G'$ of $G$, such that the following conditions hold:
\begin{itemize}
\item [\rm (i)] $\ell_0 \leq \ell \leq L_0$;
\item [\rm (ii)] $d_{G'}(x) \geq d_G(x) - (d + \varepsilon) n$ for every $x \in V(G)$;
\item [\rm (iii)] the subgraph $G'[V_i]$ is empty for all $1 \leq i \leq \ell$;
\item [\rm (iv)] $|V_0| \leq \varepsilon n$;
\item [\rm (v)] $|V_1| = |V_2| = \ldots = |V_\ell|$;
\item [\rm (vi)] for all $1 \leq i < j \leq \ell$  either $(V_i, V_j)_{G'}$ 
is an $(\varepsilon, d)$-regular pair or $G'[V_i, V_j]$ is empty.
\end{itemize}
\end{lemma}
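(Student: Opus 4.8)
\textbf{Plan for the proof of Lemma~\ref{reglem} (the degree form of Szemer\'edi's regularity lemma).}

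The plan is to derive the stated degree form from the standard (``vanilla'') version of Szemer\'edi's regularity lemma by a routine cleaning procedure, which is the usual route taken in the literature (see e.g.~the survey of Koml\'os and Simonovits). First I would recall the classical statement: for every $\eps>0$ and $\ell_0$ there is $L_0$ so that every graph $G$ on $n\geq L_0$ vertices admits an $\eps'$-regular partition $V_0,V_1,\dots,V_\ell$ with $\ell_0\le \ell\le L_0$, $|V_0|\le \eps' n$, $|V_1|=\dots=|V_\ell|$, and all but at most $\eps'\binom{\ell}{2}$ of the pairs $(V_i,V_j)$ being $\eps'$-regular. Here one chooses the parameters in the hierarchy $\eps' \ll \eps, 1/\ell_0$ (concretely something like $\eps' = \eps^2/100$ and lower-bounding $\ell_0$ by $2/\eps$ suffices), so that the various error terms accumulated below are all at most $(d+\eps)n$ in total.

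The main step is to construct the spanning subgraph $G'$ by deleting edges of $G$ that are ``unhelpful'' in three ways: (a) all edges incident to $V_0$; (b) all edges lying inside a single part $V_i$; (c) all edges in pairs $(V_i,V_j)$ that are either not $\eps'$-regular or have density less than $d$. Property~(iii) is then immediate, and property~(vi) holds after replacing the regularity parameter $\eps'$ by $\eps$ (using $\eps'\le\eps$ and the fact that a $\sqrt{\eps'}$- or $\eps'$-regular pair is also $\eps$-regular). Properties~(i), (iv), (v) are inherited directly from the classical partition. The one calculation to carry out is property~(ii): for a fixed vertex $x$, the number of edges at $x$ deleted in step~(a) is at most $|V_0|\le\eps' n$; in step~(b) at most $|V_i|\le n/\ell \le n/\ell_0$; in step~(c) at most (number of bad pairs)$\times|V_j| + \ell\cdot d\cdot|V_j|$, where the first term is bounded by $\eps'\binom{\ell}{2}\cdot (n/\ell) \le \eps' \ell n/2 \le \eps' L_0 n$ — wait, that is not small, so instead one bounds the edges at $x$ into non-regular pairs by $(\text{number of parts } V_j \text{ such that } (V_i,V_j)\text{ is bad})\cdot |V_j|$ and uses that $x$ lies in one part $V_i$, and that the total number of bad pairs is $\le\eps'\binom\ell2$; a cleaner bound is simply to note all these deleted edges number at most $\eps' \ell \cdot (n/\ell) = \eps' n$ for the non-regular pairs incident to $V_i$ plus $d\cdot \ell\cdot (n/\ell) = dn$ for the low-density pairs. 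Summing: at most $(2\eps' + 1/\ell_0 + d)n \le (d+\eps)n$ by the choice of $\eps',\ell_0$. This gives~(ii).

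The hard part — to the extent there is one — is purely bookkeeping: choosing the hierarchy $\eps'\ll\eps,1/\ell_0$ correctly so that all three deletion steps together remove at most $(d+\eps)n$ edges at every vertex, and checking that no step spoils a property established by another (in particular that deleting edges never destroys $\eps$-regularity of a pair one wishes to keep — which is true since we only delete the \emph{entire} bipartite graph of a bad pair, never a proper subset). Since this is a completely standard derivation, I would present it concisely, stating the classical regularity lemma as a black box and then spending one short paragraph on the cleaning and the degree computation for~(ii).
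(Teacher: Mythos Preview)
The paper does not prove this lemma: it is stated as the (well-known) degree form of Szemer\'edi's regularity lemma, with a reference to~\cite{sze}, and used as a black box. Your plan to derive it from the vanilla regularity lemma by the standard cleaning procedure is exactly the route taken in the Koml\'os--Simonovits survey you cite, and is correct in outline.

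One genuine wrinkle in your degree computation for~(ii): when you write ``all these deleted edges number at most $\eps'\ell\cdot(n/\ell)=\eps' n$ for the non-regular pairs incident to $V_i$'', you are implicitly assuming that the part $V_i$ containing $x$ lies in at most $\eps'\ell$ irregular pairs. The vanilla lemma only bounds the \emph{total} number of irregular pairs by $\eps'\binom{\ell}{2}$; it does not guarantee this for every individual $V_i$. The standard fix is to first move into $V_0$ every part $V_i$ that lies in more than, say, $\sqrt{\eps'}\,\ell$ irregular pairs; by averaging there are at most $\sqrt{\eps'}\,\ell$ such parts, so $|V_0|$ grows by at most $\sqrt{\eps'}\,n$, which is absorbed into the $\eps n$ bound for~(iv). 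After this step your per-vertex bound goes through with $\sqrt{\eps'}$ in place of $\eps'$, and the rest of your bookkeeping is fine.
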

We call $V_1, \dots, V_\ell$ {\it clusters}, $V_0$ the {\it exceptional set} and the
vertices in~$V_0$ {\it exceptional vertices}. We refer to~$G'$ as the {\it pure graph}.
The {\it reduced graph~$R$ of~$G$ with parameters $\varepsilon$, $d$ and~$\ell_0$} is the graph whose 
vertices are $V_1, \dots , V_\ell$ and in which $V_i V_j$ is an edge precisely when $(V_i,V_j)_{G'}$
is $(\varepsilon,d)$-regular.

A $t$-partite graph with parts $W_1,\ldots, W_t$ is \textit{nearly balanced} if $||W_i|-|W_j||\leq 1$ for every $1\leq i, j \leq t$.
We will also make use of the following multipartite version of Lemma~\ref{reglem}.
\begin{lemma}[Multipartite regularity lemma] \label{multi} 
Given any integer $t \geq 2$, any $\varepsilon > 0$ and any $\ell_0 \in \mathbb{N}$ there exists $L_0 = L_0(\varepsilon,t, \ell_0)\in \mathbb N$ such that for every $d \in [0, 1]$ 
and for every nearly balanced $t$-partite graph $G=(W_1,\dots, W_t)$ on $n \geq L_0$ vertices,
 there exists an $\ell\in \mathbb N$,  a partition $W^0_i, W^1_i, \ldots, W^\ell _i$ of $W_i$ for each $i \in [t]$ and a spanning subgraph $G'$ of $G$, such that the following conditions hold:
\begin{itemize}
\item [\rm (i)] $\ell_0 \leq \ell \leq L_0$;
\item [\rm (ii)] $d_{G'}(x) \geq d_G(x) - (d + \varepsilon) n$ for every $x \in V(G)$;
\item [\rm (iii)] $|W_i^0| \leq \varepsilon n/t$ for every $i \in [t]$;
\item [\rm (iv)] $|W^j _i| = |W^{j'}_{i'}|$ for every $i,i' \in [t]$ and $j,j' \in [\ell]$;
\item [\rm (v)] for every $i,i' \in [t]$ and $j,j' \in [\ell]$  either $(W^j _i, W^{j'}_{i'})_{G'}$ 
is an $(\varepsilon, d)$-regular pair or $G'[W^j _i, W^{j'}_{i'}]$ is empty.
\end{itemize} 
\end{lemma}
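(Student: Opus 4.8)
\textbf{Proof proposal for the Multipartite regularity lemma (Lemma~\ref{multi}).}

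The plan is to derive the multipartite version from the standard degree form of the regularity lemma (Lemma~\ref{reglem}) by a now-standard reduction: apply Lemma~\ref{reglem} to $G$ ignoring the partition structure, and then refine the resulting partition so that every cluster lands inside a single part $W_i$. First I would fix the data $t, \eps, \ell_0$ and choose auxiliary parameters $\eps', \ell_0'$ appropriately (roughly $\eps' \ll \eps/t$ and $\ell_0'$ large compared to $\ell_0$ and $1/\eps$); apply Lemma~\ref{reglem} with these parameters and some $d$ to obtain a partition $V_0, V_1, \dots, V_{\ell'}$ and a pure graph $G''$. The key point is that since each $V_a$ ($a \geq 1$) has size $|V_a| = (1-|V_0|/n)\cdot n / \ell'$, and $|V_0| \leq \eps' n$, each cluster $V_a$ meets the parts $W_1, \dots, W_t$ in pieces $V_a \cap W_i$; we then throw into a new exceptional set, for each $a$, all but the largest of these $t$ intersections. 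Carrying this out needs a careful bookkeeping argument, but the total number of vertices discarded this way is at most $(t-1)/t \cdot |V_a|$ summed over clusters, which is too much — so instead I would first refine the partition further (splitting each $V_a$ into $t$ equal-size sub-parts, or using a more efficient ``slicing'' step) so that the pieces respecting the partition are still a constant fraction of a cluster, then apply the slicing lemma (Lemma~\ref{lemma:regslic}) to preserve regularity.

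More precisely, the cleaner route is: after obtaining $V_0, V_1, \dots, V_{\ell'}$ from Lemma~\ref{reglem}, for each cluster $V_a$ and each $i \in [t]$ set $V_a^{(i)} := V_a \cap W_i$. Let $m := \min_{a,i} \lfloor |V_a^{(i)}|/\text{(something)} \rfloor$; the balancedness of $G$ together with $|V_0|$ small forces $|V_a^{(i)}| \geq (1/t - o(1))|V_a|$ on average, but not uniformly, so one truncates each $V_a^{(i)}$ to a common size $q$ (a suitable fraction of $|V_a|$, chosen so that every $V_a^{(i)}$ is at least $q$ after discarding at most $\eps n / t$ vertices per part into $W_i^0$ — this is where the near-balanced hypothesis is essential and where one must verify the discard count). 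Then I would take $\ell := \ell' \cdot t$ (relabeling the pieces $V_a^{(i)}$, further subdivided if needed to make all sizes exactly equal across all $i$ and $a$), keep $G' := G''$, and invoke Lemma~\ref{lemma:regslic}: since each $V_a^{(i)}$ is a fixed proportion $\alpha$ of $V_a$ and each $V_b^{(j)}$ a proportion $\alpha$ of $V_b$, a pair $(V_a, V_b)_{G''}$ that was $(\eps', d)$-regular yields $(V_a^{(i)}, V_b^{(j)})_{G''}$ that is $(\eps, d-\eps')$-regular provided $\eps'/\alpha \leq \eps$ and $d - \eps' \geq d$ after adjusting the target density downward from the start; for non-regular (empty) pairs in $G''$, the induced pair stays empty, giving property~(v). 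Property~(ii) is inherited verbatim from Lemma~\ref{reglem}(ii) since $G' = G''$ and we have only partitioned vertex sets, not removed edges; properties~(i), (iv) follow from the construction and the choice of $\ell_0'$; property~(iii) is exactly the bound on the total number of vertices discarded into each $W_i^0$, which we arrange to be at most $|V_0| + (\text{truncation loss}) \leq \eps n/t$.

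The main obstacle I anticipate is property~(iii) together with the uniform-size requirement~(iv): one must show that after intersecting each cluster with each part and then truncating to a common size, the number of leftover vertices placed in each $W_i^0$ is genuinely at most $\eps n/t$, not merely $\eps n$. This forces a delicate choice of the internal parameters — the number of clusters $\ell'$ must be large enough (hence $\ell_0'$ large) that $1/\ell'$ is negligible against $\eps$, the original exceptional ratio $\eps'$ must be small against $\eps/t$, and the common truncation size $q$ must be chosen as, essentially, $\lfloor (1 - \eps)|V_a|/t \rfloor$ rounded so that it divides evenly, while checking via the near-balancedness of $G$ that every $|V_a \cap W_i|$ does exceed this. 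Once the parameter hierarchy $\eps' \ll 1/\ell_0' \ll \eps/t$ is pinned down, the verification of each clause is routine; the only genuinely new content beyond Lemma~\ref{reglem} is this counting, which is why I would present it carefully and leave the regularity-preservation step to a one-line citation of Lemma~\ref{lemma:regslic}.
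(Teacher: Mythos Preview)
The paper states Lemma~\ref{multi} without proof, treating it as a standard variant of the degree form, so there is no paper proof to compare against directly. Your reduction strategy---apply Lemma~\ref{reglem} and then refine---is in the right spirit, but the execution has a genuine gap.

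The problematic step is the claim that ``every $|V_a \cap W_i|$ does exceed'' the common truncation size $q \approx (1-\eps)|V_a|/t$. This is simply false: the clusters $V_1,\dots,V_{\ell'}$ produced by Lemma~\ref{reglem} are chosen with no reference to the partition $W_1,\dots,W_t$, and nothing prevents an individual cluster $V_a$ from lying entirely inside a single part $W_{i_0}$, so that $V_a \cap W_i = \emptyset$ for all $i \neq i_0$. Near-balancedness of $G$ only says each $|W_i|$ is about $n/t$; it gives no information about how any particular $V_a$ distributes across the parts. (Indeed, for a complete balanced bipartite graph the natural equipartition puts each cluster entirely inside one side.) Consequently you cannot truncate each $V_a^{(i)}$ to a common positive size, and the indexing $\ell = \ell' \cdot t$ you propose does not yield $\ell$ equal-sized clusters inside each $W_i$.

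The standard fix is not to post-process the output of Lemma~\ref{reglem} but to feed the partition $W_1,\dots,W_t$ in as an \emph{initial partition}: one uses the well-known strengthening of the regularity lemma in which the output equipartition refines a prescribed partition (this follows from the same energy-increment proof, started from $W_1,\dots,W_t$ rather than from a single cell). Then every non-exceptional cluster already lies inside some $W_i$; near-balancedness guarantees each $W_i$ receives the same number of clusters up to $O(1)$, and a small truncation (now genuinely controllable by $\eps n/t$) equalises sizes across parts. Alternatively, one can chop each $V_a \cap W_i$ into many pieces of a common tiny size $q$ with $q/|V_a| \gg \eps'$, discarding intersections smaller than $q$; this salvages your approach, but it is not what you wrote, and the parameter bookkeeping is more delicate than the prescribed-partition route.
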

Similarly as before, for  $i \in [t]$ and $j \in [\ell]$ we call the $W^j _i$ {\it clusters}, the $W_i^0$ the {\it exceptional sets} and the
vertices in the~$W_i^0$ {\it exceptional vertices}. We refer to~$G'$ as the {\it pure graph}.
The {\it reduced graph~$R$ of~$G$ with parameters $\varepsilon$, $d$ and~$\ell_0$} is the graph whose 
vertices are the $W^j _i$ (where  $i \in [t]$ and $j \in [\ell]$) and in which $W^j _i W^{j'}_{i'}$ is an edge precisely when $(W^j _i, W^{j'}_{i'})_{G'}$
is $(\varepsilon,d)$-regular.

The following well-known corollary of the regularity lemma shows that the reduced graph almost inherits the minimum degree of the original graph.
\begin{prop}\label{prop:regdegree}
Let $0< \varepsilon , d, k<1$,
 $G$ be an $n$-vertex graph with $\delta(G)\geq kn$ and $R$ be the reduced graph of $G$ obtained by applying the regularity lemma with parameters $\varepsilon, d$. Then  $\delta(R)\geq (k-2\eps -d)|R|.$\qed
\end{prop}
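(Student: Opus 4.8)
The plan is to fix an arbitrary cluster $V_i$ of the regularity partition, choose any vertex $x\in V_i$, and bound $d_R(V_i)$ from below by analysing how the neighbourhood of $x$ in the pure graph $G'$ is distributed across the parts $V_0,V_1,\dots,V_\ell$.

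First I would invoke condition~(ii) of Lemma~\ref{reglem}: since $\delta(G)\geq kn$, every vertex $x$ satisfies $d_{G'}(x)\geq d_G(x)-(d+\varepsilon)n\geq (k-d-\varepsilon)n$. Then I would locate these neighbours. By condition~(iii) the graph $G'[V_i]$ is empty, so $x$ has no $G'$-neighbour in $V_i$. By condition~(iv) at most $|V_0|\leq \varepsilon n$ of them lie in $V_0$. Finally, for each $j\in[\ell]$ with $j\neq i$, condition~(vi) gives a dichotomy: either $G'[V_i,V_j]$ is empty, and then $x$ has no neighbour in $V_j$; or $(V_i,V_j)_{G'}$ is $(\varepsilon,d)$-regular, i.e.\ $V_iV_j\in E(R)$, and then $x$ has at most $|V_j|=|V_1|$ neighbours in $V_j$ (using condition~(v) that all clusters have equal size). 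Summing these contributions,
\[
(k-d-\varepsilon)n\ \leq\ d_{G'}(x)\ \leq\ |V_0|+d_R(V_i)\cdot |V_1|\ \leq\ \varepsilon n + d_R(V_i)\cdot |V_1|,
\]
hence $d_R(V_i)\cdot |V_1|\geq (k-d-2\varepsilon)n$.

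To finish I would use that $V_0,V_1,\dots,V_\ell$ is a partition of $V(G)$, so $\ell\,|V_1|\leq n$, that is $|V_1|\leq n/|R|$ since $|R|=\ell$; substituting this yields $d_R(V_i)\geq (k-d-2\varepsilon)|R|$ (in the degenerate case $k-d-2\varepsilon<0$ the claimed bound $\delta(R)\geq (k-2\varepsilon-d)|R|$ holds trivially). As $V_i$ was an arbitrary cluster, $\delta(R)\geq (k-2\varepsilon-d)|R|$. The identical argument with Lemma~\ref{multi} in place of Lemma~\ref{reglem} gives the statement for the multipartite reduced graph. There is no genuine obstacle here: it is a routine double-counting, and the only point to keep track of is that the $2\varepsilon$ (rather than $\varepsilon$) loss is exactly the sum of the two separate $\varepsilon n$ error terms --- one coming from the degree form in condition~(ii), the other from the size of the exceptional set $V_0$.
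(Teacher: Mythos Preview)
Your argument is correct and is exactly the standard double-counting that the paper has in mind; in fact the paper states Proposition~\ref{prop:regdegree} without proof (it is marked with \qed as a well-known corollary of the degree form), so there is nothing further to compare.
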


The next key lemma allows us to use the reduced graph $R$ of $G$ as a framework for embedding subgraphs into $G$.

\begin{lemma}[Key lemma \cite{KomlosSimonovits}]\label{keylem}
Suppose that $0 < \varepsilon < d$, that $q, t \in \mathbb{N}$ and that $R$ is a graph with $V(R) = \{v_1, \ldots, v_k\}$. 
We construct a graph $G$ as follows: replace every vertex $v_i \in V(R)$ with a set $V_i$ of $q$ vertices and replace each edge of $R$ with an $(\varepsilon,d)$-regular pair. For each $v_i \in V(R)$, let $U_i$ denote the set of $t$ vertices in $R(t)$ corresponding to $v_i$.
Let $H$ be a subgraph of $R(t)$ with maximum degree $\Delta$ and set $h := |H|$. Set $\delta := d - \varepsilon$ and $\varepsilon_0 := \delta^{\Delta}/(2 + \Delta)$.
If $\varepsilon \leq \varepsilon_0$ and $t-1 \leq \varepsilon_0q$ then there are at least $$(\varepsilon_0 q)^h \ \mbox{labelled copies of $H$ in $G$} $$ so that if $x \in V(H)$ lies in $U_i$ in R(t), then $x$ is embedded into $V_i$ in $G$.
\end{lemma}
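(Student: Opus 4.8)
\textbf{Proof proposal for Lemma~\ref{keylem}.}
The plan is to find the copies of $H$ greedily, embedding the vertices of $H$ one at a time while maintaining a large ``candidate set'' for each not-yet-embedded vertex. Order the vertices of $H$ as $x_1,\ldots,x_h$ arbitrarily, and for each $j$ let $i(j)$ be the index with $x_j\in U_{i(j)}$ in $R(t)$. During the embedding we keep, for every index $j>m$ (after $x_1,\ldots,x_m$ have been placed), a set $C_j^{(m)}\subseteq V_{i(j)}$ of vertices that are still simultaneously adjacent to all already-embedded images of neighbours of $x_j$. The key invariant will be that $|C_j^{(m)}|\geq \delta^{d_m(x_j)}q$, where $d_m(x_j)$ counts the neighbours of $x_j$ among $x_1,\ldots,x_m$; in particular $|C_j^{(m)}|\geq \delta^\Delta q$ throughout, which is much larger than $\eps q$ once $\eps\leq\eps_0$.

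The heart of the argument is the inductive step. Suppose $x_1,\ldots,x_m$ are embedded and we wish to place $x_{m+1}$ into its current candidate set $C:=C_{m+1}^{(m)}$, where $|C|\geq\delta^\Delta q>\eps q$. We must choose $y\in C$ so that, for every not-yet-embedded neighbour $x_j$ of $x_{m+1}$, the set $C_j^{(m)}\cap N(y)$ is still large enough, namely of size at least $\delta\cdot|C_j^{(m)}|\geq\delta^{d_m(x_j)+1}q=\delta^{d_{m+1}(x_j)}q$. For a fixed such $j$, the pair $(V_{i(m+1)},V_{i(j)})$ is $(\eps,d)$-regular, and both $C\supseteq$ (the set we restrict to) and $C_j^{(m)}$ have size at least $\eps q$; hence by regularity the number of $y\in C$ with $|N(y)\cap C_j^{(m)}|<(d-\eps)|C_j^{(m)}|=\delta|C_j^{(m)}|$ is less than $\eps|C|$. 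Since $x_{m+1}$ has at most $\Delta$ unembedded neighbours, the number of ``bad'' choices of $y$ is at most $\Delta\eps|C|$, and we also must avoid the at most $t-1\leq\eps_0 q$ vertices of $V_{i(m+1)}$ already used by other $x_\ell$ with $x_\ell\in U_{i(m+1)}$. Thus the number of admissible $y$ is at least
\[
|C|-\Delta\eps|C|-(t-1)\;\geq\;\delta^\Delta q-\Delta\eps\cdot q-\eps_0 q\;\geq\;\bigl(\delta^\Delta-(\Delta+1)\eps_0\bigr)q\;\geq\;\eps_0 q,
\]
using $\eps\leq\eps_0=\delta^\Delta/(\Delta+2)$ so that $\delta^\Delta-(\Delta+1)\eps_0\geq\eps_0$. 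Pick any such $y$ as the image of $x_{m+1}$, update each $C_j^{(m+1)}:=C_j^{(m)}\cap N(y)$ for neighbours $x_j$ of $x_{m+1}$ (and $C_j^{(m+1)}:=C_j^{(m)}$ otherwise), and the invariant is restored.

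Counting gives the bound: at each of the $h$ steps there are at least $\eps_0 q$ valid choices, so the number of labelled copies of $H$ with $x_j$ landing in $V_{i(j)}$ is at least $(\eps_0 q)^h$, as claimed. I would remark that the constant set $U_i$ in $R(t)$ plays no role beyond specifying which $V_i$ each $x_j$ must be embedded into; the parameter $t$ enters only through the bound $t-1\leq\eps_0 q$, which guarantees we never exhaust a class $V_i$ by the vertices of $H$ assigned to it. The one point requiring a little care — and the main thing to get right — is bookkeeping the exponents: one must verify that restricting to $N(y)$ multiplies the relevant candidate set by at least $\delta$ (valid precisely because the candidate set has size $\geq\eps q$ before the restriction), so that after all $\Delta$ possible restrictions the size is still $\geq\delta^\Delta q\gg\eps q$, keeping regularity usable at every subsequent step.
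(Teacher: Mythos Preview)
The paper does not prove this lemma; it quotes it as the classical Key Lemma of Koml\'os--Simonovits and cites \cite{KomlosSimonovits}. Your argument is exactly the standard greedy embedding proof from that source, so there is nothing to compare at the level of strategy.

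One small imprecision: when you bound the number of bad $y$ for a fixed unembedded neighbour $x_j$, regularity gives that the set of vertices in \emph{all of} $V_{i(m+1)}$ with fewer than $\delta|C_j^{(m)}|$ neighbours in $C_j^{(m)}$ has size less than $\eps q$, not $\eps|C|$ as you wrote. This does not affect the outcome, since in the displayed inequality you immediately bound $\Delta\eps|C|$ above by $\Delta\eps q$ anyway; but the intermediate claim ``less than $\eps|C|$'' is not what regularity directly yields. With that correction the chain
\[
|C|-\Delta\eps q-(t-1)\;\geq\;\delta^{\Delta}q-\Delta\eps_0 q-\eps_0 q\;=\;(\Delta+2)\eps_0 q-(\Delta+1)\eps_0 q\;=\;\eps_0 q
\]
is clean and the proof is complete.
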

Our applications of  Lemma~\ref{keylem} will take the following form: suppose within an ordered graph $G$ we have vertex classes $V_1<\ldots<V_k$ so that each pair $(V_i,V_j)_G$ is $(\varepsilon,d)$-regular. Then Lemma~\ref{keylem} tells us  $G$ contains (many) copies of any fixed size ordered graph $H$ with
 $\chi _< (H)=k$, where the $i$th vertex class of $H$ is embedded into $V_i$.

\section{Proof of Theorem~\ref{thm:almtil}}\label{sec:almtil}
We will apply the following result of Koml\'{o}s~\cite{Komlos}; this result shows that the critical chromatic number of $H$ governs the minimum degree threshold for the existence of almost perfect $H$-tilings in unordered graphs.
\begin{thm}[Koml\'os~{\cite[Theorem~8]{Komlos}}]\label{thm:kol}
Let $\mu>0$ and let $F$ be an unordered graph. Then there exists an $n_0=n_0(\mu, F)\in \mathbb{N}$ such that every graph $G$ on $n\geq n_0$ vertices with 
\[
\delta(G)\geq \left(1 - \frac{1}{\chi_{cr}(F)}\right)n
\]
contains an $F$-tiling covering all but at most $\mu n$ vertices.
\end{thm}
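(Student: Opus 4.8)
The plan is to prove Theorem~\ref{thm:kol} via Szemer\'edi's regularity lemma, reducing it to a purely combinatorial statement about the reduced graph; the balanced case of that statement reduces to the Hajnal--Szemer\'edi theorem (Theorem~\ref{hs}), and the unbalanced case is the crux. Throughout write $r:=\chi(F)$, $f:=|F|$, $\sigma:=\sigma(F)$, and fix an optimal colouring of $F$ with colour-class sizes $c_1\ge c_2\ge\dots\ge c_r=\sigma$. Set $B^\ast:=K_{f-\sigma,\dots,f-\sigma,\,(r-1)\sigma}$, the complete $r$-partite graph with $r-1$ parts of size $f-\sigma$ and one part of size $(r-1)\sigma$. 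Two elementary facts drive the proof: (a) $B^\ast$ has a perfect $F$-tiling, consisting of exactly $r-1$ copies of $F$ (assign the small colour class of $F$ to the small part of $B^\ast$ in every copy, and rotate the remaining $r-1$ classes cyclically through the $r-1$ large parts); and (b) since $\sigma\le f/r$, the smallest part of $B^\ast$ is the one of size $(r-1)\sigma$, and a short computation gives $\chi_{cr}(B^\ast)=(r-1)\frac{f}{f-\sigma}=\chi_{cr}(F)$.

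First I would apply the degree form of the regularity lemma (Lemma~\ref{reglem}) to $G$ with constants $\varepsilon\ll d\ll \mu,1/f$ and a large lower bound $\ell_0=\ell_0(\mu)$ on the number of clusters, obtaining a partition $V_0,V_1,\dots,V_\ell$, a pure graph $G'$, and a reduced graph $R$ on $\ell$ vertices with $\delta(R)\ge(1-1/\chi_{cr}(F)-2\varepsilon-d)\ell\ge(1-1/\chi_{cr}(F)-\mu/4)\ell$ by Proposition~\ref{prop:regdegree}; each cluster has size $N:=|V_1|\ge(1-\varepsilon)n/\ell$. Because $V_0$ together with any bounded number of clusters spans only an $o(n)$-fraction of $V(G)$, it suffices to (i) cover all but a $\mu/4$-fraction of the clusters of $R$ by vertex-disjoint copies of $B^\ast$ (as subgraphs of $R$), and then (ii) turn each such copy into an almost-perfect $F$-tiling of the corresponding blow-up in $G'$.

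Step (i) is the heart of the matter and I expect it to be the main obstacle. It asserts: any graph $R$ on $\ell$ vertices with $\delta(R)\ge(1-1/\chi_{cr}(B^\ast)-\mu/4)\ell$ contains vertex-disjoint copies of $B^\ast$ covering all but at most $(\mu/4)\ell$ of its vertices. This is the ``almost-perfect tiling'' statement for the single, explicit, complete multipartite graph $B^\ast$, so it is not a circular appeal to the theorem being proved. When $F$ is balanced, i.e.\ $\sigma=f/r$, the graph $B^\ast$ is a balanced complete $r$-partite graph, so $\chi_{cr}(B^\ast)=r$ and the required near-perfect $B^\ast$-tiling of $R$ follows from the Hajnal--Szemer\'edi theorem (Theorem~\ref{hs}) — in fact one need not even pass through $B^\ast$ here, since the blow-up in $G'$ of a single $K_r$ in $R$ already contains an almost-perfect $F$-tiling, so an almost-perfect $K_r$-tiling of $R$ suffices. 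For unbalanced $F$, where $\delta(R)$ can be well below the $(1-1/r)\ell$ Hajnal--Szemer\'edi threshold, one instead argues directly from the minimum-degree hypothesis: greedily extract copies of $B^\ast$, with a final clean-up handling the few leftover clusters; the gain over using cliques comes precisely from the freedom to place a cluster of comparatively low degree into the small part of $B^\ast$, which costs little in the degree budget. (Equivalently, one may first establish the corresponding fractional-tiling statement by an LP-duality argument and then round it; this is perhaps the cleanest route.)

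For step (ii), fix a copy $\widetilde B$ of $B^\ast$ in $R$, say on cluster set $\mathcal W$, with parts $P_1,\dots,P_r$. In $G'$ every pair of clusters lying in different $P_i$'s is $(\varepsilon,d)$-regular, so $\bigcup_{W\in\mathcal W}W$ carries a regular blow-up of $B^\ast$. Using fact (a), fix a perfect $F$-tiling of $B^\ast$ into copies $F^{(1)},\dots,F^{(r-1)}$, and produce an $F$-tiling of this blow-up by repeatedly applying the Key Lemma (Lemma~\ref{keylem}) to extract one copy of $F$ at a time, cycling through the placements dictated by $F^{(1)},\dots,F^{(r-1)}$ in equal proportions, deleting each found copy, and invoking the slicing lemma (Lemma~\ref{lemma:regslic}) to maintain regularity of the remaining sub-clusters; since $B^\ast$ is \emph{perfectly} $F$-tiled the part ratios are preserved, so the iteration only stops once an $o(1)$-fraction of $\bigcup_{W\in\mathcal W}W$ remains. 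Taking the union over all copies $\widetilde B$ of these $F$-tilings leaves uncovered only $V_0$, the $\le(\mu/4)\ell$ uncovered clusters, and an $o(1)$-fraction of each blow-up — at most $\mu n$ vertices in all. The threshold $n_0=n_0(\mu,F)$ is then determined by the $L_0$ of the regularity lemma and the size requirements of the Key Lemma.
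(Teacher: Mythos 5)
The paper itself does not prove this statement: it is Komlós's theorem, imported verbatim from \cite{Komlos} and used as a black box (namely in the proof of Theorem~\ref{thm:almtil}), so there is no argument in the paper for you to match.

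On its own terms, your reduction is structurally sound. The graph $B^*$ is indeed a valid ``bottlegraph'' for $F$: it has $(r-1)f$ vertices, it is perfectly $F$-tiled by $r-1$ copies obtained by rotating the $r-1$ non-minimal colour classes through the $r-1$ large parts, and since $(r-1)\sigma\le f-\sigma$ its smallest part has size $(r-1)\sigma$, giving $\chi_{cr}(B^*)=(r-1)f/(f-\sigma)=\chi_{cr}(F)$. Your Step~(ii) is also fine --- it is the standard ratio-preserving iteration with the Key Lemma and the Slicing Lemma, essentially the same one the paper runs in proving Theorem~\ref{thm:almtil}. The genuine gap is Step~(i): you need that every $\ell$-vertex graph $R$ with $\delta(R)\ge(1-1/\chi_{cr}(B^*)-\mu/4)\ell$ has a $B^*$-tiling covering all but $(\mu/4)\ell$ vertices. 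You correctly identify this as the heart of the matter, but you do not establish it, and for unbalanced $F$ this is exactly where the content of Komlós's theorem lives. ``Greedily extract copies of $B^*$ with a final clean-up'' is not a proof below the Hajnal--Szemer\'edi threshold: a greedy extraction can leave $\Theta(\ell)$ clusters with no $B^*$ among them long before the degree hypothesis is exhausted, because $B^*$ (unlike a clique) requires $(r-1)\sigma$ clusters sharing a large common non-neighbourhood, and a careless deletion order can destroy all such configurations --- the ``place low-degree clusters in the small part'' heuristic is not an invariant you have maintained. The parenthetical LP-duality remark is indeed closer to what Komlós actually does (a fractional covering argument on the reduced graph, followed by an integral extraction), but it is a pointer rather than an argument: you have not explained how to produce the fractional $B^*$-cover from the degree condition, nor how to round it to an integral almost-perfect tiling. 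As written, the crux of Komlós's theorem is asserted rather than proved.
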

The next  result ensures that in any $k$ linear size disjoint vertex sets $A_1,\dots,A_k$ of an ordered graph $G$, one can find `nicely ordered' linear size  subsets $S_i$ of each $A_i$. As  we will see shortly, this property is crucial for our application of the regularity lemma in the proof of Theorem~\ref{thm:almtil}. As pointed out by a referee, it is also a special case of the `same type lemma' of B\'ar\'any and Valtr~\cite{bv}.

\begin{lemma}\label{lem:crop}
For $n\geq k\geq 2$, let $A_1, A_2, \ldots, A_k$ be nonempty disjoint
subsets of $[n]$. Then there exist sets $S_1, S_2, \ldots, S_k$, where $S_i\subseteq A_i$, and a permutation $\sigma=(\sigma(1), \sigma(2), \ldots, \sigma(k))$ of the set $[k]$, such that the following conditions hold for all $i, j\in[k]$:
\begin{itemize}
\item[(i)] $|S_i|\geq \lfloor |A_i|/k \rfloor$;
\item[(ii)] $S_i < S_j$ if $\sigma(i)<\sigma(j)$.
\end{itemize}
\end{lemma}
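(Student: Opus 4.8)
\textbf{Proof plan for Lemma~\ref{lem:crop}.}
The plan is to induct on $k$, roughly halving the number of ``active'' sets at each step. For the base case $k=2$, I take the median element $m$ of $A_1\cup A_2$ (with respect to the ordering on $[n]$). At least one of the two halves $[1,m]$ and $[m+1,n]$ contains at least half of $A_1$, and at least one contains at least half of $A_2$; in fact, one of the two halves contains at least $|A_1|/2$ elements of $A_1$, and since the other half has at least $(|A_1|+|A_2|)/2 \geq$ roughly half the total, a short counting argument shows we can choose $S_1\subseteq A_i$ in one half and $S_2\subseteq A_j$ in the other, each of size at least $1/2$ of the original, with $S_1<S_2$ or $S_2<S_1$; this gives the permutation $\sigma$. (The constant $2^{2^{0}}=2$ matches $|S_i|\geq|A_i|/2$.)

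For the inductive step, suppose the claim holds for all smaller values. Given $A_1,\dots,A_k$, let $N:=|A_1\cup\dots\cup A_k|$ and split $[n]$ at the point $m$ so that $|(A_1\cup\dots\cup A_k)\cap[1,m]|$ is as close to $N/2$ as possible. Call the two halves $L:=[1,m]$ and $R:=[m+1,n]$. For each $i$, at least one of $|A_i\cap L|, |A_i\cap R|$ is at least $|A_i|/2$; assign $A_i$ to a side accordingly, breaking ties arbitrarily, and let $A_i':=A_i\cap L$ or $A_i\cap R$ be the corresponding set, so $|A_i'|\geq |A_i|/2$. This partitions $\{A_1',\dots,A_k'\}$ into a ``left group'' $\mathcal{L}$ (subsets of $L$) and a ``right group'' $\mathcal{R}$ (subsets of $R$) with $|\mathcal L|+|\mathcal R|=k$ and each group of size at most $k-1$ (if one group were empty we simply re-split; in any case the worst case is sizes as balanced as possible, but all I need is that each group has size $\leq k-1$). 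Apply the induction hypothesis separately to the sets in $\mathcal{L}$ (viewed as subsets of $[n]$, with $|\mathcal L|\leq k-1$) and to the sets in $\mathcal{R}$; this yields subsets $S_i\subseteq A_i'\subseteq A_i$ and orderings $\sigma_{\mathcal L}$, $\sigma_{\mathcal R}$ of the two groups with $|S_i|\geq |A_i'|/2^{2^{|\mathcal L|-2}}\geq |A_i|/2^{2^{k-1-2}+1}$ (and symmetrically on the right). Finally, concatenate the orderings: every $S_i$ in $\mathcal L$ lies in $L$, hence is entirely below every $S_j$ in $\mathcal R$, so placing the $\mathcal L$-order first and the $\mathcal R$-order second gives a global permutation $\sigma$ of $[k]$ satisfying (ii).

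The bookkeeping of the size loss is the one point that needs care. At each of the (at most $k-1$) levels of recursion we lose a factor $2$ from the split into $L,R$, and we need the recursion depth to produce the stated bound $2^{2^{k-2}}$; so the claim to verify is that if $f(k)$ is the worst-case denominator, then $f(2)=2$ and $f(k)\leq 2\cdot f(k-1)^{?}$ — more precisely, because both groups have size $\leq k-1$, we get $f(k)\leq 2\, f(k-1)$ only if the recursion depth in $k$ is $O(k)$, but if a group can still have size close to $k-1$ at every level the depth is $\Theta(k)$ and $2\,f(k-1)$ already gives $f(k)=2^{k-1}$, which is far better than $2^{2^{k-2}}$. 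In fact the doubly-exponential bound in the statement is very generous, so I expect the induction $f(k)\leq 2f(k-1)$ (hence $f(k)\leq 2^{k-1}\leq 2^{2^{k-2}}$ for $k\geq 2$) to close it comfortably. The main obstacle is thus not the size estimate but making sure the splitting step always produces two nonempty groups each of size at most $k-1$: if all of $A_1,\dots,A_k$ happen to fall on the same side of every balanced cut, one must instead cut \emph{within} the set classes — but the point of choosing $m$ to balance the \emph{union} $A_1\cup\dots\cup A_k$ is exactly that each half then has $\approx N/2$ elements, so it is impossible for one half to contain $\geq |A_i|/2$ of every $A_i$ unless $k=1$; I will spell this pigeonhole out carefully to guarantee $1\leq |\mathcal L|,|\mathcal R|\leq k-1$.
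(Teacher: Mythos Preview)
Your approach is correct and genuinely different from the paper's. The paper inducts by peeling off one set at a time: it applies the hypothesis to $A_1,\dots,A_{k-1}$ to get ordered $S'_1<\dots<S'_{k-1}$, then compares $A_k$ against $S'_{k-1}$ via the $k=2$ case; if $A_k$ lands \emph{below} $S'_{k-1}$, it must re-apply the induction hypothesis to the $k-1$ sets $S'_1,\dots,S'_{k-2},S'_k$. This double recursion is exactly what produces the doubly-exponential denominator $2^{2^{k-2}}$. Your median-of-the-union split, by contrast, recurses once on each side with group sizes summing to $k$, so the loss satisfies $f(k)\le 2f(k-1)$ and you actually get the much stronger bound $|S_i|\ge |A_i|/2^{k-1}$. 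So your route is not only different but sharper; the paper's bound is, as you suspected, very generous.

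Two places in the write-up need tightening. First, ``breaking ties arbitrarily'' is not safe, and the sentence ``it is impossible for one half to contain $\ge |A_i|/2$ of every $A_i$ unless $k=1$'' is false as stated (take $A_1=\{1,4\}$, $A_2=\{2,3\}$, $m=2$). What \emph{is} true is the integer pigeonhole: if $|L\cap\bigcup A_i|=\lfloor N/2\rfloor$ and every $i$ satisfied $|A_i\cap L|<|A_i|/2$, then summing gives $\lfloor N/2\rfloor\le \sum \lfloor(|A_i|-1)/2\rfloor\le (N-k)/2$, forcing $k\le 1$; similarly $|A_i\cap R|<|A_i|/2$ for all $i$ is impossible. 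Hence the sets $\mathcal L_0:=\{i:|A_i\cap L|\ge |A_i|/2\}$ and $\mathcal R_0:=\{i:|A_i\cap R|\ge |A_i|/2\}$ are both nonempty and cover $[k]$, so for $k\ge 2$ you can always choose a partition $\mathcal L\subseteq\mathcal L_0$, $\mathcal R\subseteq\mathcal R_0$ with both parts nonempty. State this explicitly rather than deferring it. Second, your base case $k=2$ is just the general step specialised to two sets; the paper instead takes the medians of $A_1$ and $A_2$ separately, which is cleaner to write but equivalent in effect. Either way, once you have the two-group split with the integer pigeonhole above, the induction closes with room to spare.
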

\begin{proof}
By removing elements if necessary we may assume that each $A_i$ contains a multiple of $k$ elements.
Given any $i \in [k]$, we refer to the $j$th smallest number in $A_i$ as the \emph{$j$th element of $A_i$}. Let $i_1 \in [k]$ be such that the $(|A_{i_1}|/k)$th element of $A_{i_1}$ is smaller than the $(|A_{j}|/k)$th element of $A_{j}$ for all $j \in [k]\setminus \{i_1\}$. Define $S_{i_1}$ to consist of the first $|A_{i_1}|/k$ elements of $A_{i_1}$.
Next define $i_2 \in [k]\setminus \{i_1\}$  such that the $(2|A_{i_2}|/k)$th element of $A_{i_2}$ is smaller than the $(2|A_{j}|/k)$th element of $A_{j}$ for all $j \in [k]\setminus \{i_1, i_2\}$. Define $S_{i_2}$ to contain the $t$th elements of $A_{i_2}$ where $t=(|A_{i_2}|/k)+1,\dots, 2|A_{i_2}|/k$.
Continuing in this way we define sets $S_{i_1}<S_{i_2}<\dots <S_{i_k}$ where each $S_{i_j}\subseteq A_{i_j}$ has size $|A_i|/k$ and $\{i_1,\dots, i_k\}=[k]$. This immediately implies the lemma.
\end{proof}

\noindent\textit{Proof of Theorem~\ref{thm:almtil}.}
We will fix additional constants satisfying the following hierarchy
\begin{equation}\label{eq:para}
0<\eps_1\ll \eps_2 \ll\eps \ll d, \mu_1, \mu_2\ll \eta, 1/|B|.
\end{equation}
Moreover, we choose an integer $\ell_0$ such that $\ell_0\geq n_0(\mu_1, B)$, where $n_0(\mu_1, B)$ is as defined in Theorem~\ref{thm:kol}.
In what follows, we assume that the order $n$ of our given ordered graph $G$ is sufficiently large for our estimates to hold. We now apply the regularity lemma (Lemma~\ref{reglem}) with parameters $\eps_1, d$ and $\ell_0$ to $G$ to obtain a reduced graph $R$, clusters $\{V_a, a\in V(R)\}$, an exceptional set $V_0$, and a spanning subgraph $G'\subseteq G$. 
Inequality (\ref{eq:para}) together with Proposition~\ref{prop:regdegree} implies that
\begin{equation}
\delta(R)\geq \left(1 - \frac{1}{\chi_{cr}(B)} + \frac{\eta}{2}\right)|R|.
\end{equation}
Since $|R|\geq \ell_0\geq n_0(\mu_1, B)$, we can apply Theorem~\ref{thm:kol} to $R$ to find a $B$-tiling $\mathcal{B}$ covering all but at most $\mu_1|R|$ vertices.
We delete all the clusters not contained in some copy of $B$ in $\mathcal{B}$ from $R$ and add all the vertices lying in these clusters to the exceptional set $V_0$. Thus, $|V_0|\leq \eps_1 n + \mu_1 n\leq 2\mu_1 n$.
 From now on, we denote by $R$ the subgraph of the reduced graph induced by all the remaining clusters. Thus $\mathcal{B}$ now is a perfect $B$-tiling of $R$.
 
Fix an arbitrary copy $B\in\mathcal{B}$ with partite sets $U_1,\ldots, U_k$, and let $A:=\bigcup_{a\in V(B)}V_a$ and $A_i:=\bigcup_{a\in U_i}V_a$. Since $B$ is a complete multipartite graph, repeatedly applying Propositions~\ref{prop:regspar} and~\ref{prop:regcomb} to $G'[A]$, we can find a spanning subgraph $G''\subseteq G'[A]$ such that for every distinct $i, j\in [k]$, $(A_i, A_j)_{G''}$ is $(\eps_2, d-\eps_2)$-regular.
The idea is that $G''$ is a blow-up of the bottlegraph $B$, where the complete bipartite graphs between vertex classes are replaced by $(\eps_2, d-\eps_2)$-regular pairs. We now show that this bottlegraph-like structure will ensure that $G''$ contains an almost perfect $H$-tiling. Then repeating this process for every $B\in\mathcal{B}$  will ensure the
desired almost perfect $H$-tiling in $G$.
 
Let $\alpha:=1/{(2k)}$. By Lemma~\ref{lem:crop}, there exist sets $S_i\subseteq A_i$, and a permutation $\sigma$ of $[k]$ such that $|S_i|\geq \alpha|A_i|$, and $S_i<S_j$ whenever $\sigma(i)<\sigma(j)$. Moreover, by the slicing lemma (Lemma~\ref{lemma:regslic}), we have that each $(S_i, S_j)_{G''}$ is $(\eps, d-\eps)$-regular. 
Now we apply the key lemma (Lemma~\ref{keylem}) on $G''[\cup S_i]$, and find a blown-up copy $B_1(t)$ of $B$, where $|B_1(t)\cap S_i|=|U_i|t$ for every $i$ and $t$ is a fixed integer given by the definition of the bottlegraph. Note that by the choice of the $S_i$, $B_1$ naturally has an interval ordering with respect to the permutation $\sigma$, and therefore $B_1(t)$ has a perfect $H$-tiling.
After that, we can delete $V(B_1(t))$ from $A$ (and therefore from each $A_i$);
crucially after this deletion, the ratio $|A_i|/|A_j|$ amongst all pairs of classes $A_i$, $A_j$  remains the same as before. Further, still for every distinct $i, j\in [k]$, $(A_i, A_j)_{G''}$ is $(2\eps_2, d-2\eps_2)$-regular.

These properties allow us to repeatedly apply this argument, thereby
 obtaining an $H$-tiling in $G[A]$ covering all but at most $\mu_2 |A|$ vertices. More precisely, suppose we have subsets $A'_i\subseteq A_i$ for all $i \in [k]$ where:
 (i) $|A'_i|\geq \mu _2 |A_i|$ for all $i \in [k]$; (ii) $|A'_i|/|A'_j|=|A_i|/|A_j|$ for all $i,j \in [k]$. Then by the slicing lemma, and as $2\eps_2 /\mu_2 \ll \sqrt{\eps_2} $ and $\eps_2 \ll d$, we have that $(A'_i,A'_j)_{G''}$ is 
 an $(\sqrt{\eps_2},d/2)$-regular pair for all distinct 
 $i,j \in [k]$. Thus, we can repeatedly apply the argument in the  paragraph above (now to the $A'_i$ rather than the $A_i$), whilst still retaining property (ii) and terminating the process when we obtain subsets $A'_i$ that no longer satisfy property (i). Notice that by (ii), as soon as (i) is no longer satisfied for \emph{some} $i \in [k]$, in fact $|A'_i|< \mu _2 |A_i|$ for \emph{all} $i \in [k]$.
 Thus, this  process will result in an $H$-tiling in $G[A]$ covering all but at most $\mu_2 |A|$ vertices.
 
 
 Finally, simply repeat this process for all copies of $B$ in $\mathcal{B}$; we obtain an $H$-tiling of $G$ covering all but at most $(2\mu_1+\mu_2)n\leq \eta n$ vertices.
\qed

\section{Proof of the absorbing theorems}\label{sec:abs}
To prove Theorems~\ref{thm:abs} and~\ref{thm:abs2}, we make use of the following, now standard, lemma.
\begin{lemma}\label{lo}
Let $h,s\in \mathbb N$ and $\xi >0$. Suppose that $H$ is an ordered hypergraph on $h$ vertices. Then there exists an $n_0 \in \mathbb N$ such that the following holds. Suppose that $G$ is an ordered hypergraph
on $n \geq n_0$ vertices so that, for any $x,y \in V(G)$, there are at least $\xi n^{sh-1}$ $(sh-1)$-sets $X \subseteq V(G)$ such that both $G[X \cup \{x\}]$ and $G[X \cup \{y\}]$ contain perfect $H$-tilings.
Then $V(G)$ contains a set $M$ so that
\begin{itemize}
\item $|M|\leq (\xi/2)^h n/4$;
\item $M$ is an $H$-absorbing set for any $W \subseteq V(G) \setminus M$ such that $|W| \in h \mathbb N$ and  $|W|\leq (\xi /2)^{2h} n/(32s^2 h^3)$. 
\end{itemize}
\end{lemma}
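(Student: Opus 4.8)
The plan is to use the standard probabilistic absorbing argument of R\"odl, Ruci\'nski and Szemer\'edi, adapted to the ordered setting. First I would define, for each set $W \subseteq V(G)$ with $|W| \le h$ (in fact it suffices to work with $W = \{x\}$ or $W = \{x,y\}$, but the key gadget handles pairs), the family $\mathcal{A}(x,y)$ of \emph{absorbers}: those $(sh-1)$-sets $X$ such that both $G[X \cup \{x\}]$ and $G[X \cup \{y\}]$ admit perfect $H$-tilings. The hypothesis gives $|\mathcal{A}(x,y)| \ge \gamma n^{sh-1}$ for every pair $x,y$. I would then select a random family $\mathcal{F}$ of $(sh-1)$-sets by including each such set independently with probability $p := (\gamma/2)^h n / (4 n^{sh-1}) \cdot c$ for a suitable small constant, tuned so that $\mathbb{E}|\mathcal{F}|$ and $\mathbb{E}(\#\text{intersecting pairs in }\mathcal{F})$ and $\mathbb{E}(\#\text{pairs }x,y\text{ with no absorber in }\mathcal{F})$ are all $o$ of the target size. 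A Chernoff/Markov argument then yields a realisation $\mathcal{F}$ with: (a) $|\mathcal{F}| \le (\gamma/2)^h n / 8$; (b) at most $(\gamma/2)^h n / 8$ pairs of sets in $\mathcal{F}$ intersect; and (c) for every pair $x,y \in V(G)$, at least, say, $(\gamma/2)^{2h} n / (16 s^2 h^3) + 1$ members of $\mathcal{F}$ lie in $\mathcal{A}(x,y)$.

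Next I would clean up: delete one set from each intersecting pair and also delete any set in $\mathcal{F}$ that is not pairwise disjoint from the rest, obtaining a subfamily $\mathcal{F}'$ of pairwise disjoint $(sh-1)$-sets with $|\mathcal{F}'| \ge |\mathcal{F}| - (\gamma/2)^h n/8$, and such that for every pair $x,y$ still at least $(\gamma/2)^{2h} n/(16 s^2 h^3)$ members of $\mathcal{F}'$ absorb the pair $\{x,y\}$ (since we deleted only few sets). Set $M := \bigcup_{X \in \mathcal{F}'} X$; then $|M| \le (sh-1)|\mathcal{F}'| \le (\gamma/2)^h n / 4$ after adjusting the constant in $p$, and $G[M]$ has a perfect $H$-tiling because each $X \in \mathcal{F}'$ has size $sh-1$ and $G[X \cup \{x\}]$ tiles perfectly — actually here one must be slightly careful: $G[X]$ alone need not tile, so the cleanest formulation is to keep the sets of $\mathcal{F}'$ as the gadgets and only claim $G[M]$ tiles once we note that we can always pair up leftover vertices; the usual fix is to build $M$ from \emph{disjoint absorbers each handling a single vertex} and to note that $|M| = (sh-1)|\mathcal{F}'|$ is divisible appropriately, so I would phrase the absorbers to absorb single vertices and handle $W$ of size in $h\mathbb{N}$ by absorbing its vertices one at a time.

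Finally, to verify the absorption property: given $W \subseteq V(G) \setminus M$ with $|W| \in h\mathbb{N}$ and $|W| \le (\gamma/2)^{2h} n / (32 s^2 h^3)$, I would greedily match the vertices of $W$ in pairs (or singletons, depending on the final gadget choice) to distinct sets in $\mathcal{F}'$ that absorb them — this is possible because each vertex can be absorbed by at least $(\gamma/2)^{2h} n/(16 s^2 h^3) > |W| \cdot (\text{number already used})$ sets, so a greedy selection never gets stuck. Replacing each used absorber $X$ by a perfect $H$-tiling of $G[X \cup \{\text{assigned vertices of }W\}]$ and using the perfect $H$-tiling of $G[M']$ on the unused part $M'$ of $M$, we cover $M \cup W$ exactly, so $G[M \cup W]$ has a perfect $H$-tiling, as required. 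The ordered nature of $G$ and $H$ enters only in that ``perfect $H$-tiling'' means a tiling by \emph{ordered} copies respecting the vertex order of $[n]$; since the hypothesis already supplies absorbers with this stronger property, no extra work is needed — this is the point where the ordered setting is handled for free, and the main (minor) obstacle is simply bookkeeping the constants so that the three quantities in the Chernoff argument and the final greedy count all line up with the stated bounds $(\gamma/2)^h n/4$ and $(\gamma/2)^{2h} n/(32 s^2 h^3)$.
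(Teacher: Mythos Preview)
The paper itself does not prove this lemma; it cites Lo and Markstr\"om~\cite[Lemma~1.1]{lo} and remarks that the proof is identical in the ordered setting. Your high-level plan (random selection of gadgets, Chernoff/Markov cleanup, greedy absorption) and your observation that the vertex ordering is already baked into the hypothesis are both correct and match that reference.

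There is, however, a real gap at the step you yourself flag. The $(sh-1)$-sets $X\in\mathcal{F}'$ cannot tile on their own (their size is not a multiple of $h$), so $G[M]$ cannot have a perfect $H$-tiling if $M=\bigcup_{X\in\mathcal{F}'}X$; and your suggested fix of absorbing single vertices one at a time is impossible for the same divisibility reason --- a single-vertex absorber $A$ would need both $|A|$ and $|A|+1$ divisible by $h$. The Lo--Markstr\"om construction works one level up: for each $h$-set $T=\{t_1,\dots,t_h\}$ one first picks a copy of $H$ on vertices $\{z_1,\dots,z_h\}$ disjoint from $T$ (the hypothesis with $x=y$ guarantees $\Omega(n^h)$ copies of $H$ in $G$), and then for each $i$ picks an $(sh-1)$-swap-set $X_i$ for the pair $(t_i,z_i)$, all disjoint. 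The resulting $sh^2$-set $A:=\{z_1,\dots,z_h\}\cup X_1\cup\cdots\cup X_h$ satisfies: $G[A]$ tiles via the tilings of the $X_i\cup\{z_i\}$, and $G[A\cup T]$ tiles via the copy of $H$ on $\{z_1,\dots,z_h\}$ together with the tilings of the $X_i\cup\{t_i\}$. Counting gives $\Omega(n^{sh^2})$ such absorbers for every $h$-set $T$, and the random selection is performed over \emph{these} sets; the remainder of your outline then goes through and yields the stated constants.
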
 
Lemma~\ref{lo} was proven in the case when $G$ is unordered by 
Lo and Markstr\"om~\cite[Lemma 1.1]{lo}. However, the proof in the ordered setting is identical (so we do not provide a proof here). In particular, the original proof requires nowhere that the graphs are unordered.

\subsection{Proof of Theorem~\ref{thm:abs}}
To prove Theorem~\ref{thm:abs} we must show that the hypothesis of Lemma~\ref{lo} is satisfied. 
Define $B_{[n]}(x, z)$ as the set $[n] \cap [x-z, x+z]$.
The following lemma provides a step in that direction.

\begin{lemma}\label{lem:connect}
Let $H$ be an $h$-vertex ordered graph  and let $0< \eta \ll 1/h$. 
Then there exists an $n_0\in \mathbb N$ and $\rho, \gamma >0$ where $1/n_0 \ll \rho \ll \gamma \ll \eta $ and so that the following holds.
Suppose that $G$ is an ordered graph with vertex set $[n]$ where $n\geq n_0$ and where
$$\delta (G) \geq \left (1-\frac{1}{\chi_< (H)}+\eta \right )n.$$
Given any $x \in [n]$, there are at least $(1-\gamma)|B_{[n]}(x, \eta n/16)|$ elements $y \in [n]$ so that
\begin{itemize}
\item $y \in B_{[n]}(x, \eta n/16)$;
\item there are at least $\rho n^{h-1}$ $(h-1)$-sets $X \subseteq V(G)$ such that both $G[X \cup \{x\}]$ and $G[X \cup \{y\}]$  contain spanning copies of $H$.
\end{itemize}
\end{lemma}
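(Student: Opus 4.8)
The plan is to find, for a fixed vertex $x$, a large collection of vertices $y$ nearby in the ordering that can be ``swapped'' with $x$ inside many copies of $H$. The key idea is that if $x$ and $y$ occupy nearly the same position in $[n]$ and have similar neighbourhoods, then any embedding of $H$ using $x$ can be re-routed through $y$. First I would write $\ell := \chi_<(H) = 2$ (or general $\ell$) and recall from Lemma~\ref{keylem} and the minimum degree condition that $G$ contains many copies of $H$; more precisely, for a typical vertex $x$, there are $\Omega(n^{h-1})$ $(h-1)$-sets $X$ with $G[X\cup\{x\}]$ a copy of $H$ where $x$ plays a prescribed role (say the role of vertex $i_x \in [h]$, chosen so that the interval-class sizes work out). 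The standard way to see this: greedily build a regular/degree structure or just directly count using $\delta(G) \ge (1-1/\ell+\eta)n$ and the fact that $H$ has interval chromatic number $\ell$, together with Lemma~\ref{lem:crop}-type splitting to handle the vertex ordering.

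The core of the argument is the following local swap. Fix $x$. For a parameter $m := \eta n/16$, consider the ball $B_{[n]}(x,m) = [n]\cap[x-m,x+m]$. Call $y \in B_{[n]}(x,m)$ \emph{good} if $d_G(y, V') \ge d_G(x,V') - (\eta/4) n$ for all the relevant sets $V'$ that will arise as ``slots'' in embeddings — equivalently, $y$'s neighbourhood is not much smaller than $x$'s on the clusters we care about. Since $\delta(G)$ is large, all but a $\gamma$-fraction of vertices in $B_{[n]}(x,m)$ should be good: a vertex that is bad must miss $\Omega(\eta n)$ neighbours of $x$ somewhere, and a counting/averaging argument (or an application of the regularity lemma, discarding the $\le \varepsilon n$ atypical vertices) bounds the number of such $y$. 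Here I would be slightly careful: ``similar neighbourhood'' must be interpreted relative to the $O(1)$ many slot-classes, and since $y$ lies within $m = \eta n/16$ of $x$ in the order, inserting $y$ in place of $x$ respects the required ordering constraints (the interval containing position $x$ in any candidate embedding has width comparable to $n/\ell \gg m$, so $x$ and $y$ are order-interchangeable).

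Next, for each good $y$, I would produce the $\rho n^{h-1}$ common extension sets $X$. Take the $\Omega(n^{h-1})$ sets $X$ with $G[X\cup\{x\}] \cong H$ (with $x$ in role $i_x$); for each such $X$, the remaining $h-1$ vertices occupy specified ``slots'', and $x$'s only constraints are (a) adjacency to the neighbours of $i_x$ in $H$, and (b) order. Because $y$ is good, $y$ is adjacent to all but at most $(\eta/4)n$ of the candidates in each slot adjacent to $i_x$; so for all but a small fraction of the sets $X$ we may also form $G[X\cup\{y\}] \cong H$. A clean way to quantify this: fix the $X$'s; the number of $X$ that work for $x$ but fail for $y$ is at most $(h-1)\cdot (\eta/4)n \cdot O(n^{h-2}) = O(\eta n^{h-1})$, which is a small fraction, leaving $\ge \rho n^{h-1}$ sets good for both. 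Setting $\rho \ll \gamma \ll \eta$ as in the hierarchy absorbs all these losses.

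\textbf{Main obstacle.} The delicate point is the vertex ordering: unlike the unordered case, I cannot freely place $x$ or $y$ into an arbitrary slot of a copy of $H$ — the slot's position in $[n]$ must be consistent with $x$'s (or $y$'s) position. This is exactly why one restricts to $y \in B_{[n]}(x, \eta n/16)$: the window is narrow enough that $x$ and $y$ are interchangeable in any embedding, yet wide enough (linear in $n$) that it contains $\Theta(n)$ vertices, almost all of which are good. Making ``good'' precise — i.e.\ identifying the right finite list of slot-classes on which $y$ must dominate $x$, and showing this list is determined by $H$ alone and has bounded size — and checking that the regularity/counting step indeed yields the claimed $(1-\gamma)$-fraction, is where the real care is needed; everything else is routine counting within the hierarchy $1/n_0 \ll \rho \ll \gamma \ll \eta \ll 1/h$.
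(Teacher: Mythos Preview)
Your high-level intuition is right---restrict to $y$ close to $x$ in the order so that a swap respects the ordering, and then argue that most such $y$ can replace $x$ in many copies of $H$. But the mechanism you propose for ``most $y$ are good'' does not work, and this is the heart of the lemma.

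You define $y$ to be good if $d_G(y,V') \ge d_G(x,V') - (\eta/4)n$ for each slot-class $V'$, and then assert that a counting/averaging argument bounds the number of bad $y$. It does not: the minimum degree condition tells you nothing about how $N(y)$ is distributed across a fixed finite list of slots. Concretely, take a slot $V'$ with $|V'| = \Theta(n)$ and $V' \subseteq N(x)$; it is entirely consistent with $\delta(G) \ge (1-1/r+\eta)n$ that half the vertices in $B_{[n]}(x,\eta n/16)$ have zero neighbours in $V'$ (their edges go elsewhere). So a constant fraction of $y$ can be bad, and you cannot reach the $(1-\gamma)$-fraction conclusion by averaging. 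Your subsequent count ``$(h-1)\cdot(\eta/4)n\cdot O(n^{h-2})$'' for bad $X$ presupposes exactly this unproven control on $|N(x)\setminus N(y)|$ restricted to each slot.

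The paper supplies the missing structural ingredient via the \emph{multipartite} regularity lemma, and the setup is what makes everything click. First partition $[n]$ into $t \approx 16/\eta$ consecutive intervals $W_1 < \cdots < W_t$, chosen so that the entire ball $B_{[n]}(x,\eta n/16)$ lies inside a single interval $W_{i^*}$; delete edges inside each $W_i$ (losing only $O(\eta n)$ from the minimum degree). Apply the multipartite regularity lemma to this $t$-partite graph, obtaining a reduced graph $R_1$ whose clusters are naturally ordered (any two clusters in different $W_i$'s satisfy $<$). The degree condition gives $\delta(R_1)\ge(1-1/r+\eta/4)|R_1|$, and moreover $x$ has many neighbours in at least $(1-1/r+\eta/4)|R_1|$ clusters; together these yield, for any cluster $W^{j^*}_{i^*}$ inside $W_{i^*}$, a copy of $K_r$ in $R_1$ containing $W^{j^*}_{i^*}$ and $r-1$ further clusters each rich in $x$-neighbours. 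Now regularity does the work your averaging could not: because $(W^{j^*}_{i^*},\, N(x)\cap W^{j_k}_{i_k})$ is $(\eps^{1/2},d/2)$-regular, all but $r\eps^{1/2}|W^{j^*}_{i^*}|$ vertices $y$ in $W^{j^*}_{i^*}$ have degree $\Omega(d\eta)|W^{j_k}_{i_k}|$ into $N(x)\cap W^{j_k}_{i_k}$ for every $k$. For each such $y$, the Key Lemma applied to the common neighbourhoods $W'_k := N(x)\cap N(y)\cap W^{j_k}_{i_k}$ produces $\rho n^{h-1}$ copies of $H$ good for both $x$ and $y$, with the ordering automatic from the interval partition. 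Running over all clusters in $W_{i^*}$ and absorbing the exceptional set gives the $(1-\gamma)$-fraction.

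In short: you correctly flagged the ordering issue and the need for ``slot-classes determined by $H$'', but the step you labelled routine---showing most $y$ are good for those slots---is precisely where regularity is indispensable, and the interval pre-partition is what makes the ordered Key Lemma applicable.
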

\proof
Choose $0<\rho \ll 1/\ell _0  \ll \eps \ll \gamma \ll d \ll \eta \ll 1/h$ where $\ell _0 \in \mathbb N$, 
and let $n$ be sufficiently large. Let $G$ be as in the statement of the lemma.
Write $r:=\chi _<(H)$.

First of all, clearly there is a partition $W_1, \dots , W_t$ of $[n]$ where
\begin{itemize}
\item[(i)] $t:= \lfloor 8/\eta \rfloor$;
\item[(ii)] $|W_i|= \lfloor \frac{n}{t} \rfloor$ or $\lceil \frac{n}{t} \rceil$ for all $ i \in [t]$;
\item[(iii)] $W_i<W_j$ for every $1\leq i<j\leq t $;
\item[(iv)] there is some $i^* \in [t-1]$ so that $B_{[n]}(x, \eta n/16) \subseteq W_{i^*}\cup W_{i^*+1} $.
\end{itemize}
Note that (iii) implies that each of the $W_i$s is an interval in $[n]$.

\smallskip

Define $G_1:=G[W_1,W_2,\dots, W_t]$; that is we have deleted all edges within each $G[W_i]$.
Hence,
\begin{align}\label{G1}
\delta (G_1) \geq \left (1 -\frac{1}{r}  +\frac{2\eta}{3} \right )n.
\end{align}

Apply Lemma~\ref{multi} to $G_1$ with parameters $\eps, d, t, \ell _0$ to obtain a pure graph $G'_1$ and reduced graph $R_1$ of $G_1$, and a partition $W^0_i, W^1_i, \ldots, W^\ell _i$ of $W_i$ for each $i \in [t]$.
Crucially, we have defined $G_1$ so that if $W^{j_1}_{i_1}W^{j_2}_{i_2} \in E(R_1)$ then  $W^{j_1}_{i_1}<W^{j_2}_{i_2}$ or $W^{j_2}_{i_2}<W^{j_1}_{i_1}$.
Inequality (\ref{G1}) together with Proposition~\ref{prop:regdegree} imply that 
\begin{equation}\label{R1}
\delta (R_1) \geq (1-1/r+\eta /2)|R_1|.
\end{equation}

Now let $R^*_1$ be the induced subgraph of $R_1$ obtained by deleting all $W^j_{i^*+1}$.
Thus, we have deleted precisely a $(1/t)$th proportion of $V(R_1)$ to obtain $R^*_1$.
Therefore, (\ref{R1}) and (i) imply that 
\begin{equation}\label{R1*}
\delta (R^*_1) \geq (1-1/r+\eta /4)|R^*_1|.
\end{equation}
Write $N_{R^*_1}(x):= \{W^j _i \in V(R^*_1) \, : \, d_{G_1} (x, W^j _i) \geq \eta |W^j _i| /4\}$. The minimum degree condition on $G_1$ ensures that
\begin{align}\label{spec}
|N_{R^*_1}(x)|\geq (1-1/r +\eta /4)|R^*_1|.
\end{align}

Fix an arbitrary cluster $W^{j^*} _{i^*}$ for some $j^*\in[\ell]$.
Combining  (\ref{R1*}) and (\ref{spec}) ensures we can greedily choose clusters $W^{j_1}_{i_1}, \dots, W^{j_{r-1}}_{i_{r-1}}$ so that:
\begin{itemize}
\item[(a)] $W^{j_1}_{i_1}, \dots, W^{j_{r-1}}_{i_{r-1}}$  together with $W^{j^*} _{i^*}$ form a copy of $K_r$ in $R^*_1$;
\item[(b)] $W^{j_1}_{i_1}, \dots, W^{j_{r-1}}_{i_{r-1}} \in N_{R^*_1} (x)$;
\item[(c)] There is some $z^*\in \{0,\dots, r-1\}$ so that
$$W^{j_1}_{i_1}<\cdots < W^{j_{z^*}}_{i_{z^*}}< (W^{j^*} _{i^*} \cup \{x\}) < W^{j_{z^*+1}} _{i_{z^*+1}} < \dots < W^{j_{r-1}}_{i_{r-1}}.$$
\end{itemize}
In particular, (c) is ensured by the choice of $R^*_1$ and (iv).

By the slicing lemma (Lemma~\ref{lemma:regslic}) and the fact that $W^{j_k}_{i_k}\in N_{R^*_1}(x)$ for every $k \in [r-1]$, the pair $(W^{j^*} _{i^*}, N_{G_1}(x)\cap~W^{j_k}_{i_k})_{G'_1}$ is $(\eps ^{1/2}, d/2)$-regular.
By the definition of $(\eps^{1/2}, d/2)$-regularity, all but at most $r\eps^{1/2}|W^{j^*} _{i^*}|$ vertices $y \in W^{j^*} _{i^*}$ have degree at least $(d/2-\eps^{1/2})|N_{G_1}(x)\cap W^{j_k}_{i_k}|\geq d\eta |W^{j_k}_{i_k}|/12$ into $N_{G_1}(x)\cap W^{j_k}_{i_k}$ in $G_1$ for every $k \in [r-1]$. 
Fix such a vertex $y$.
Define $$W'_k:= N_{G_1}(x)\cap W^{j_k}_{i_k} \cap N_{G_1}(y)$$ for each $k \in [r-1]$, and note that $|W'_k|\geq d\eta |W^{j_k}_{i_k}|/12$.
Given any $i \not = j \in [r-1]$, Lemma~\ref{lemma:regslic} implies that each pair $(W'_i,W'_j)_{G'_1}$ and $(W'_i, W^{j^*} _{i^*})_{G'_1}$ are $(\eps ^{1/4}, d/4)$-regular.
Recalling that $\chi _< (H)=r$, property (c) above together with Lemma~\ref{keylem} implies that 
there are at least $\rho n^{h-1}$ $(h-1)$-sets $X \subseteq W^{j^*} _{i^*} \cup \bigcup W'_k$ 
such that both $G[X \cup \{x\}]$ and $G[X \cup \{y\}]$ span copies of $H$.

For each choice of the cluster $W^{j^*} _{i^*}$, there were at most $r\eps^{1/2} |W^{j^*} _{i^*}|$ `bad' selections for $y \in W^{j^*} _{i^*}$.
Since $|W^{0} _{i^*}|\leq \eps n/t$ this implies that for all but at most $(r\eps^{1/2} + \eps) |W_{i^*}|$ vertices $y \in W_{i^*}$, 
there are at least $\rho n^{h-1}$ $(h-1)$-sets $X \subseteq V(G)$ such that both $G[X \cup \{x\}]$ and $G[X \cup \{y\}]$ span copies of $H$.

One can  argue analogously  (now considering the induced subgraph $R^{**}_1$ of $R_1$ obtained by deleting all $W^j_{i^*}$) to conclude the following: for all but at most $(r\eps^{1/2} + \eps) |W_{i^*+1}|$ vertices $y \in W_{i^*+1}$, 
there are at least $\rho n^{h-1}$ $(h-1)$-sets $X \subseteq V(G)$ such that both $G[X \cup \{x\}]$ and $G[X \cup \{y\}]$ span copies of $H$.

Thus, (iv) above together with the fact that $|W_{i^*}|,|W_{i^*+1}| \leq  3|B_{[n]}(x, \eta n/16) |$ and $\eps \ll \gamma$ implies that the conclusion of the lemma holds.
\endproof

With Lemma~\ref{lem:connect} to hand, we can now prove the following result.
Note that Lemma~\ref{lem:connect2} together with Lemma~\ref{lo} immediately imply Theorem~\ref{thm:abs}.
Indeed, applying  Lemma~\ref{lem:connect2} ensures the hypothesis of Lemma~\ref{lo} holds, and then the latter result yields the desired absorbing set $Abs$.

\begin{lemma}\label{lem:connect2}
Let $H$ be an $h$-vertex ordered graph and $0<\eta \ll 1/h$. 
Then there exists an $n_0\in \mathbb N$ and $\xi >0$ where $1/n_0 \ll \xi \ll \eta \ll 1/h$  so that the following holds.
Set $s:= \lceil 32/\eta \rceil$.
Suppose that $G$ is an ordered graph with vertex set $[n]$ where $n\geq n_0$ and where
$$\delta (G) \geq \left (1-\frac{1}{\chi_< (H)}+\eta \right )n.$$
Given any $x, y \in [n]$, there are at least $\xi n^{sh-1}$ $(sh-1)$-sets $X \subseteq V(G)$ such that both 
 $G[X \cup \{x\}]$ and $G[X \cup \{y\}]$ contain perfect $H$-tilings.
\end{lemma}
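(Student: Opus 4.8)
\textbf{Proof plan for Lemma~\ref{lem:connect2}.}
The plan is to iterate Lemma~\ref{lem:connect} to build a ``connection'' between $x$ and $y$ through a bounded-length chain of vertices, each consecutive pair of which shares many $(h-1)$-set connectors. Let $\eta$ and $s:=\lceil 32/\eta\rceil$ be as in the statement, and choose $\rho \ll \gamma \ll \eta$ as provided by Lemma~\ref{lem:connect}. First I would observe that Lemma~\ref{lem:connect}, applied with centre $x$, produces a large set of ``good'' vertices $y'$ within distance $\eta n/16$ of $x$ in the linear order: at least $(1-\gamma)|B_{[n]}(x,\eta n/16)|$ of them, each connected to $x$ by at least $\rho n^{h-1}$ $(h-1)$-set connectors. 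Since $|B_{[n]}(x,\eta n/16)|$ has size $\Theta(\eta n)$, and $\gamma \ll \eta$, this good set is nonempty and in fact has size $\Omega(\eta n)$; moreover it contains vertices on both sides of $x$ (as long as $x$ is not within $\eta n/16$ of an endpoint of $[n]$, which one handles by noting the ball is still linear-sized). The key point is that from $x$ one can ``hop'' a distance of order $\eta n/16$ in either direction along $[n]$ while retaining a connector.

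The second step is to chain these hops. Since any two vertices of $[n]$ are at distance at most $n$, and each hop covers a distance of at least, say, $\eta n/32$ (choosing the good vertex $y'$ on the correct side of $x$, which is possible because the good set misses at most a $\gamma$-fraction of the ball and $\gamma \ll 1$), after at most $s = \lceil 32/\eta\rceil$ hops we can reach a vertex $z$ within distance $\eta n/32$ of $y$, and then from $z$ one more application of Lemma~\ref{lem:connect} (centred at $z$, with $y$ lying in its ball) connects $z$ to $y$. So there is a sequence $x = v_0, v_1, \dots, v_{s-1}, v_s = y$ (padding with trivial hops if fewer than $s$ are needed) of length at most $s$, where each consecutive pair $(v_{i-1}, v_i)$ has at least $\rho n^{h-1}$ $(h-1)$-set connectors $X_i$ with $G[X_i \cup \{v_{i-1}\}]$ and $G[X_i \cup \{v_i\}]$ both spanning copies of $H$. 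Here one must be slightly careful: Lemma~\ref{lem:connect} only guarantees that \emph{most} vertices of the ball are good, so at each step I choose a good vertex strictly closer to $y$; the density of good vertices ($1-\gamma$) guarantees such a choice exists.

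The third step is to assemble the connectors into a single $(sh-1)$-set. Take $X := \{v_1, \dots, v_{s-1}\} \cup X_1 \cup X_2 \cup \dots \cup X_s$, where we additionally require the sets $X_1, \dots, X_s$ to be pairwise disjoint and disjoint from $\{x, v_1, \dots, v_{s-1}, y\}$ --- this is where the multiplicative slack $\rho n^{h-1}$ per step matters, since at the $i$th step only $O(sh \cdot n^{h-2}) = o(n^{h-1})$ choices of $X_i$ are forbidden by earlier choices, so a $(1-o(1))$-fraction of the $\rho n^{h-1}$ connectors survive. Then $G[X \cup \{x\}]$ has the perfect $H$-tiling $\{X_1 \cup \{v_0\}\} \cup \{X_2 \cup \{v_2\}\} \cup \dots$: namely, use $X_1 \cup \{v_0\}$ to cover $v_0 = x$ and $X_1$, then $X_i \cup \{v_i\}$ to cover $v_i$ and $X_i$ for $2 \le i \le s$, which together cover $\{x, v_1, \dots, v_{s-1}\} \cup \bigcup X_i = X \cup \{x\}$. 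Symmetrically, $G[X \cup \{y\}]$ is tiled by $X_s \cup \{v_s\}$ together with $X_i \cup \{v_{i-1}\}$ for $1 \le i \le s-1$. Since $X$ has exactly $(s-1) + sh - s \cdot 0 = \dots$ --- more precisely we pick each $|X_i| = h-1$ and there are $s-1$ internal vertices $v_i$, giving $|X| = s-1 + s(h-1) = sh - 1$, as required. Counting the number of ways to make all these disjoint choices gives at least $(\rho/2)^s n^{s(h-1)} \cdot \binom{\text{valid } v_i\text{'s}}{\cdots} \ge \xi n^{sh-1}$ for a suitable $\xi \ll \eta$.

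\textbf{Main obstacle.} The principal difficulty is the bookkeeping around the ``most vertices are good'' clause of Lemma~\ref{lem:connect}: one must verify that, when hopping toward $y$, a good vertex strictly on the correct side and strictly closer to $y$ always exists (this uses $\gamma \ll 1$ together with the fact that the ball around the current vertex, intersected with the half-line toward $y$, still contains $\Omega(\eta n)$ vertices as long as we are not yet within $\eta n/16$ of $y$), and that the total number of hops is bounded by $s$ independently of $n$. The disjointness requirement in the final assembly is routine given the polynomial slack, but it does force the final $\xi$ to be exponentially small in $s$ (hence in $1/\eta$), which is consistent with the hierarchy $\xi \ll \eta$ asserted in the statement.
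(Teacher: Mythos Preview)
Your chaining strategy is the same as the paper's, and the assembly of the $(sh-1)$-set from the intermediate vertices $v_1,\dots,v_{s-1}$ together with the connectors $X_1,\dots,X_s$ is the right idea. Two points need fixing.

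The genuine gap is in the final hop. You write that ``one more application of Lemma~\ref{lem:connect} (centred at $z$, with $y$ lying in its ball) connects $z$ to $y$'', but Lemma~\ref{lem:connect} centred at $z$ only guarantees that a $(1-\gamma)$-fraction of $B_{[n]}(z,\eta n/16)$ is good for $z$; the single prescribed vertex $y$ may well lie in the bad $\gamma$-fraction, and then no connector $X_s$ exists. The remedy --- which is exactly what the paper does in its base case $|x-y|\le\eta n/16$ --- is to apply Lemma~\ref{lem:connect} centred at $y$ as well, producing a $(1-\gamma)$-dense set of good-for-$y$ vertices in $B_{[n]}(y,\eta n/16)$, and then to choose the penultimate chain vertex $v_{s-1}$ to be simultaneously good for $v_{s-2}$ and good for $y$. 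Once $|v_{s-2}-y|\le\eta n/16$ these two dense sets overlap in $\Omega(\eta n)$ vertices, so there are many valid choices (and many choices is what the counting requires). Separately, your two explicit tilings are swapped: $\{X_i\cup\{v_{i-1}\}:1\le i\le s\}$ is the perfect $H$-tiling of $X\cup\{x\}$, while $\{X_i\cup\{v_i\}:1\le i\le s\}$ tiles $X\cup\{y\}$; as written, your proposed tiling of $X\cup\{x\}$ uses the tile $X_s\cup\{v_s\}=X_s\cup\{y\}$, which is not contained in $X\cup\{x\}$, and leaves $v_1$ uncovered.
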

\proof
Choose $\xi$ so that $0< \xi \ll \rho''\ll\rho' \ll \rho\ll \gamma \ll \eta$ where $\rho$ and $\gamma$ are as in Lemma~\ref{lem:connect}.
Let $G$ be as in the statement of the lemma.

The idea for the proof is straightforward: we first prove the result for every $x,y$ very close together except that instead of having $s= \lceil 32/\eta \rceil$ we have $s=1$ (call this Step~1). Then for $x,y$ slightly further apart, we have many choices of $z$ `in the middle' of $x$ and $y$. Then applying Step~1 to both $x,z$ and $y,z$ (and `gluing' the structures between $x$ and $z$, and $z$ and $y$ together) we conclude that the lemma holds for such $x,y$ except that now $s=2$. Repeating this process we deduce that for $x$ and $y$ of increasing distance, one can conclude that the lemma holds for such $x,y$, but at the expense of increasing $s$.
From this it is easy to deduce that the  lemma holds for all $x,y \in [n]$ with $s:= \lceil 32/\eta \rceil$.

First suppose $x,y \in [n]$ and $|x-y|\leq \eta n/16$. Then by Lemma~\ref{lem:connect} there are at least $\eta n/20$ vertices $z$ in 
$ B_{[n]}(x, \eta n/16)\cap B_{[n]}(y, \eta n/16)$ for which
\begin{itemize}
\item there are at least $\rho n^{h-1}$ $(h-1)$-sets $X \subseteq V(G)$ such that both $G[X \cup \{x\}]$ and $G[X \cup \{z\}]$ span copies of $H$;
\item there are at least $\rho n^{h-1}$ $(h-1)$-sets $Y \subseteq V(G)$ such that both $G[Y \cup \{y\}]$ and $G[Y \cup \{z\}]$ span copies of $H$.
\end{itemize}
Choose $z$, $X$ and $Y$ to be disjoint; there are at least $\eta \rho ^2 n^{2h-1}/(20(2h-1)!)-O(n^{2h-2})> \rho' n^{2h-1} $ choices for the set $S:=\{z\}\cup X \cup Y$.
Notice that each such set $S$ is chosen so that both  $G[S \cup \{x\}]$ and $G[S \cup \{y\}]$ contain perfect $H$-tilings.

Next, we assume $|x-y|\leq \eta n/9$.  There are at least $ \eta n/9- 2(\eta n/9 -\eta/16) = \eta n/72$ vertices $z$ such that
$$|x-z|, |y-z|\leq \eta n/16,$$
and for each such choice of $z$,
\begin{itemize}
\item there are at least $\rho' n^{2h-1}$ $(2h-1)$-sets $X \subseteq V(G)$ such that both $G[X \cup \{x\}]$ and $G[X \cup \{z\}]$ contain perfect $H$-tilings;
\item there are at least $\rho' n^{2h-1}$ $(2h-1)$-sets $Y \subseteq V(G)$ such that both $G[Y \cup \{y\}]$ and $G[Y \cup \{z\}]$ contain perfect $H$-tilings.
\end{itemize}
Indeed, the first bullet point is obtained by applying the conclusion of the last paragraph with $z$ playing the role of $y$; the last bullet point is obtained by applying the conclusion of the last paragraph with $z$ playing the role of $x$.
Similarly as before, choose disjoint $z, X, Y$; there are at least $\rho''n^{4h-1}$ choices for the set $S:=\{z\}\cup X\cup Y$, for which both $G[S \cup \{x\}]$ and $G[S \cup \{y\}]$ contain perfect $H$-tilings.

More generally, for any $x,y \in [n]$, by repeated iterations of the above argument we obtain some $t\leq s$ such that there are at least
$\xi^{1/2} n^{th-1}$ $(th-1)$-sets $X' \subseteq V(G)$ such that both 
 $G[X' \cup \{x\}]$ and $G[X' \cup \{y\}]$ contain perfect $H$-tilings.
For each such set $X'$ we have that $G\setminus X'$ contains more than $\rho n^{h}/2$ copies of $H$.
Add $s-t$ such disjoint copies of $H$  to obtain from $X'$ a set $X$. Then $X$ is as  desired and there are at least $\xi n^{sh-1}$ choices for $X$.
\endproof

\subsection{Proof of Theorem~\ref{thm:abs2}}
To prove Theorem~\ref{thm:abs2}, we need the following two lemmas to verify the hypothesis of Lemma~\ref{lo}.
\begin{lemma}\label{lem:conn1}
Let $H$ be an $h$-vertex ordered graph with $\chi_<(H)=2$, which satisfies the following properties:
\begin{itemize}
\item[\rm{(i)}] $1, \lceil h/2\rceil, h$ are isolated vertices;
\item[\rm{(ii)}] all edges of $H$ are between the intervals $A:=[2, \lceil h/2\rceil - 1]$ and $B:=[\lceil h/2\rceil + 1, h-1]$. 
\end{itemize}
Let $0<\eta < 1$. 
Then there exists an $n_0\in \mathbb N$ and $\xi >0$ where $1/n_0 \ll \xi \ll \eta ,1/h$  so that the following holds.
Suppose that $G$ is an ordered graph with vertex set $[n]$ where $n\geq n_0$ and where
\begin{equation}\label{eq: conn1de}
\delta (G) \geq \eta n.
\end{equation}
Given any $x, y \in [n]$, there are at least $\xi n^{h-1}$ $(h-1)$-sets $X \subseteq V(G)$ such that both 
 $G[X \cup \{x\}]$ and $G[X \cup \{y\}]$ span copies of $H$.
\end{lemma}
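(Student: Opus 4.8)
Write $r:=\lceil h/2\rceil$, so $H$ has isolated vertices exactly\footnote{Here ``exactly'' is not needed; it is enough that $1,r,h$ are isolated and all edges lie in $[2,r-1]\times[r+1,h-1]$.} at positions $1,r,h$ and every edge of $H$ runs between the blocks $[2,r-1]$ and $[r+1,h-1]$. Set $s:=r-1$ and $t:=\lfloor h/2\rfloor$, so that $s+t=h-1$. The plan is to produce $\Omega(n^{h-1})$ ``admissible'' sets $X=\{w_1<\cdots<w_{h-1}\}\subseteq V(G)\setminus\{x,y\}$ with the property that the low block $P:=\{w_1,\dots,w_s\}$ is completely joined in $G$ to the high block $Q:=\{w_{s+1},\dots,w_{h-1}\}$, and $X$ is positioned relative to $x,y$ so that inserting $x$ (resp.\ $y$) into $X$ lands the new vertex at one of the isolated positions $1,r,h$. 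Each such $X$ works: the inserted vertex has no edge requirements, and a short case check (over which of the positions $1,r,h$ the inserted vertex occupies) shows that every required edge of $H$ runs between a vertex of $P$ and a vertex of $Q$, so completeness of the pair $(P,Q)$ suffices. Since $P,Q$ are recovered from $X$ as its $s$ smallest and $t$ largest elements, distinct admissible pairs give distinct $X$, so it is enough to find $\Omega(n^{h-1})$ admissible pairs.

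\textbf{Reduction to a complete bipartite counting problem.} Assume WLOG $x\le y$ and set $I_1:=[1,x)$, $I_2:=(x,y)$, $I_3:=(y,n]$ (possibly $I_2=\emptyset$). There are six ways to slot $x$ and $y$ into isolated positions with a single common $X$, namely $(\pi_x,\pi_y)\in\{(1,1),(1,r),(1,h),(r,r),(r,h),(h,h)\}$, and a direct translation of ``$\pi_x$ elements of $X$ below $x$, $\pi_y-1$ below $y$'' shows these correspond precisely to the six location patterns: $P\cup Q\subseteq I_j$ for a single $j\in\{1,2,3\}$, or $P\subseteq I_j$ and $Q\subseteq I_{j'}$ for some $1\le j<j'\le 3$ (with $I_j<I_{j'}$ automatically giving $P<Q$). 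Hence it suffices to exhibit, for \emph{at least one} of these six patterns, $\Omega(n^{h-1})$ ordered copies of $K_{s,t}$ in $G$ with the $s$-side lying in the first interval and the $t$-side in the second.

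\textbf{Using the minimum degree.} Since $\delta(G)\ge\eta n$ we have $e(G)\ge\eta n^2/2$; discarding the at most $2n$ edges meeting $\{x,y\}$, either (i) some $e(G[I_j])\ge(\eta/100)n^2$, or (ii) some $e_G(I_j,I_{j'})$ with $j<j'$ is at least $(\eta/100)n^2$. In case (ii), $I_j$ and $I_{j'}$ have linear size and the bipartite graph $G[I_j,I_{j'}]$ has density bounded below in terms of $\eta$; by the standard K\H{o}v\'ari--S\'os--Tur\'an-type supersaturation (convexity applied to codegrees) it contains $\Omega(n^{s+t})=\Omega(n^{h-1})$ copies of $K_{s,t}$ with the $s$-side in $I_j$ and the $t$-side in $I_{j'}$, and since $I_j<I_{j'}$ these are ordered as required. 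In case (i), $G[I_j]$ is a graph of linear order and density bounded below in $\eta$; a short dyadic argument reduces this to case (ii): repeatedly bisect the current interval, noting that either the bipartite density across the bisection is bounded below (and we win inside $I_j$), or one half has strictly larger density, which — the density being at most $\tfrac12$ — can happen only a bounded (in $\eta$) number of times before halting at a still linear-size interval. Either way we obtain $\Omega(n^{h-1})$ admissible pairs, and choosing $\xi$ small enough in terms of $\eta$ and $h$ yields the required $\xi n^{h-1}$ sets $X$.

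\textbf{Main obstacle.} The genuine mathematical content is light; the work is bookkeeping. One must (a) carry out the six-case verification that, with a single common $X$, placing $x$ and $y$ at isolated positions forces exactly (or at most) the edge set ``$P$ complete to $Q$'' on $X$, and match the six placements to the six interval patterns; and (b) record the routine counting lemma that a bipartite graph with density bounded below on linear-size parts has $\Omega(n^{s+t})$ copies of $K_{s,t}$, together with the dyadic reduction of the one-interval case to the bipartite case. Notably, unlike Lemma~\ref{lem:connect}, no regularity lemma is needed here, since the isolated vertices $1,r,h$ of $H$ mean $x$ and $y$ can always be embedded with no edge constraints.
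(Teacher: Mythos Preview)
Your proposal is correct and follows the same high-level strategy as the paper: partition $[n]\setminus\{x,y\}$ into the three intervals $I_1,I_2,I_3$ determined by $x<y$, observe that the required ``core'' of $H$ is an ordered complete bipartite graph $K_{s,t}$ with $s=\lceil h/2\rceil-1$ and $t=\lfloor h/2\rfloor$ (the paper calls this $H_0$ with $|S_0|=|A|+1$, $|L_0|=|B|+1$, which are the same numbers), and then pigeonhole on the $\Omega(n^2)$ edges of $G$ away from $x,y$ to find a dense configuration in which many such ordered $K_{s,t}$'s live. Your six $(\pi_x,\pi_y)$ cases are exactly the paper's three ``good for $v$'' configurations applied simultaneously to $x$ and to $y$.

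The difference lies in how the $K_{s,t}$-count is executed. The paper simply writes ``a standard application of the regularity method shows that there are at least $\xi n^{h-1}$ copies of $H_0$ in $G[V_i,V_j]$'', implicitly invoking Lemma~\ref{reglem} together with Lemma~\ref{lem:crop} (to get ordered classes when $i=j$) and the Key Lemma. You instead give an elementary argument: K\H{o}v\'ari--S\'os--Tur\'an supersaturation for the cross-interval case $i<j$, and a dyadic bisection to reduce the single-interval case $i=j$ to a bipartite one. Your route avoids regularity entirely, which is cleaner here since, as you note, the isolated vertices $1,\lceil h/2\rceil,h$ mean no edges at $x,y$ are ever needed. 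One small wording fix: in the dyadic step, the relevant bound is that the edge density $e(I)/\binom{|I|}{2}$ is at most $1$ (or $e(I)/|I|^2\le 1/2$ if that is your normalisation); either way the doubling argument terminates in $O(\log(1/\eta))$ steps, leaving a linear-size interval, so the conclusion stands.
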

\begin{proof}
Let $H_0$ be a complete bipartite ordered graph with parts $S_0<L_0$, where $|S_0|=|A|+1$ and $|L_0|=|B|+1$. 
For a copy of $H_0\subseteq G$ and a vertex $v\in V(G)\setminus V(H_0)$, we say $H_0$ is \textit{good for $v$} if one of the following holds: (a) $v < S_0 <L_0$; (b) $S_0 < v < L_0$; (c) $S_0 <L_0 <v$.
By the assumption on $H$, if $H_0$ is good for $v$, then $G[V(H_0)\cup v]$ contains a spanning copy of $H$.
Therefore, it is sufficient to find $\xi n^{h-1}$ copies of $H_0$ in $G$ which are good for both $x$ and $y$.

Without loss of generality we assume $x<y$. 
Let $V_1:=\{v\in [n] \mid v<x\}$, $V_2:=\{v\in [n] \mid x<v<y\}$, and $V_3:=\{v\in [n] \mid v>y\}$.
By~(\ref{eq: conn1de}) and the pigeonhole principle, there exist $1\leq i\leq j\leq 3$ such that $e(G[V_i, V_j])\geq \eta n^2/13$. A standard application of the regularity method shows that there are at least $\xi n^{h-1}$ copies of $H_0$ in $G[V_i, V_j]$. By the construction of the $V_i$s,  each such copy of $H_0$ is good for both $x$ and $y$, and this completes the proof.
\end{proof}

Recall that $s(H)$ is the smallest vertex in $H$ that is adjacent to $h$, if $h$ is not isolated.
\begin{lemma}\label{lem:conn2}
Let $H$ be an $h$-vertex ordered graph with $\chi_<(H)=2$, which satisfies the following properties:
\begin{itemize}
\item[\rm{(i)}] $1$ and $\lceil h/2 \rceil$ are isolated vertices, while $h$ is not isolated;
\item[\rm{(ii)}] all edges of $H$ are between the intervals $[2,  \lceil h/2 \rceil -1]$ and $[\lceil h/2 \rceil +1, h]$;
\item[\rm{(iii)}] $1\leq s(H)<\lceil h/2 \rceil$ and $[s(H), h-1]$ is an independent set.
\end{itemize}
Let $0<\eta \ll 1/h$. 
Then there exists an $n_0\in \mathbb N$ and $\xi >0$ where $1/n_0 \ll \xi \ll \eta \ll 1/h$  so that the following holds.
Set $s:= 2(\lceil h/2 \rceil - s(H))$.
Suppose that $G$ is an ordered graph with vertex set $[n]$ where $n\geq n_0$ and
\begin{equation}\label{eq: conn2de}
\delta (G) \geq \eta n.
\end{equation}
For every $x, y \in [n]$, there are at least $\xi n^{sh-1}$ $(sh-1)$-sets $X \subseteq V(G)$ such that both 
 $G[X \cup \{x\}]$ and $G[X \cup \{y\}]$ contain perfect $H$-tilings.
\end{lemma}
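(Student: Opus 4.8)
\textbf{Proof proposal for Lemma~\ref{lem:conn2}.}
The plan is to reduce the construction of a perfect $H$-tiling in $G[X\cup\{x\}]$ (and in $G[X\cup\{y\}]$) to finding many copies of a suitable complete bipartite ``gadget'' graph, exactly as in the proof of Lemma~\ref{lem:conn1}, but now using \emph{several} copies of the gadget so that the vertex $x$ (respectively $y$) can be absorbed into one of them regardless of where it sits in the linear order. The key point exploited by hypothesis~(iii) is that the interval $[s(H),h-1]$ spans no edges of $H$: this means that if we take a complete bipartite ordered graph $H_0$ with small side $S_0$ and large side $L_0$ placed with $S_0<L_0$, then inserting one extra vertex $v$ anywhere in the block $[\,\text{position }s(H),\ h-1\,]$ of $H_0$ still yields a spanning copy of $H$, because that extra vertex only needs to be non-adjacent to everything in a prescribed independent interval and adjacent to vertex $h$ only (which we place as the last vertex $>L_0$). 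Thus a single $H_0$ can absorb a vertex $v$ provided $v$ lies in the correct ``window'' relative to $S_0\cup L_0$; since that window has width roughly $\lceil h/2\rceil-s(H)$, we will need on the order of $s=2(\lceil h/2\rceil - s(H))$ copies of the gadget arranged consecutively so that, for any position of $x$, \emph{some} copy has $x$ in its absorbing window.

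The steps, in order, are as follows. First, fix $x<y$ (without loss of generality) and, as in Lemma~\ref{lem:conn1}, partition $[n]$ into the three intervals $V_1=\{v<x\}$, $V_2=\{x<v<y\}$, $V_3=\{v>y\}$; by~(\ref{eq: conn2de}) and pigeonhole, some pair $(V_i,V_j)$ with $i\le j$ carries $\ge \eta n^2/13$ edges, hence (by a standard regularity/counting argument, as invoked in Lemma~\ref{lem:conn1}) contains $\Omega(n^{m})$ labelled copies of any fixed bipartite ordered graph on $m$ vertices whose two sides can be embedded into $V_i$ and $V_j$ in the correct order. Second, design the tiling template: one special ``absorbing'' copy $H_0$ of a complete bipartite ordered graph that will contain either $x$ or $y$ in its window, together with $s-1$ further vertex-disjoint copies of $H$ (plain copies, no absorption needed) to pad out to the required $sh$ vertices; choosing $s=2(\lceil h/2\rceil-s(H))$ guarantees that no matter whether $x$ falls into $V_1$, $V_2$, or $V_3$ region relative to the block where we place the gadgets, we can slide the windows so that $x$ is absorbed by one gadget and $y$ by another (or the same, handled separately). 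Third, build the $(sh-1)$-set $X$ greedily: first embed the absorbing gadget(s) and the padding copies of $H$ into the dense pair $(V_i,V_j)$ and into $G$ more generally using the minimum degree $\delta(G)\ge\eta n$ to find disjoint copies of $H$ one at a time, keeping everything disjoint from $\{x,y\}$; the number of ways to do this is $\ge \xi n^{sh-1}$ after absorbing constants. Fourth, verify that for each such $X$ both $G[X\cup\{x\}]$ and $G[X\cup\{y\}]$ decompose into copies of $H$: the gadget plus $x$ forms one copy of $H$ by the independence of $[s(H),h-1]$, the remaining gadget plus $y$ likewise, and the $s-2$ padding graphs are already copies of $H$ — and symmetrically with the roles reflected so the same $X$ works for both $x$ and $y$.

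The main obstacle I expect is the bookkeeping in Step~2 and Step~4: making precise the placement of the gadget windows so that a \emph{single} set $X$ simultaneously admits a perfect $H$-tiling of $G[X\cup\{x\}]$ and of $G[X\cup\{y\}]$. Because $x<y$, the ``slot'' that $x$ fits into is to the left of the slot $y$ fits into, so one must arrange the $s$ gadgets in a consecutive block and show that shifting which gadget absorbs $x$ versus which absorbs $y$ can always be done within the $s=2(\lceil h/2\rceil - s(H))$ available gadgets, using that each gadget's absorbing window has width $\lceil h/2\rceil - s(H)$ and that the windows for $x$ and $y$ overlap the block of gadgets. The dense-pair location (which of the three cases $i\le j$ occurs) interacts with this: if the heavy pair is $(V_1,V_1)$, say, all gadgets sit left of $x$, so $x$ must be absorbed as the \emph{rightmost} inserted vertex and $y$ even further right — one checks this is compatible with hypothesis~(iii) since $h$ is the rightmost vertex of $H$ and $s(H)$ is its only constraint. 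Once these case distinctions are laid out, the regularity counting and the greedy extension (using $\delta(G)\ge\eta n$ to keep finding disjoint copies of $H$, exactly as in the last paragraph of the proof of Lemma~\ref{lem:connect2}) are routine, and the constant $\xi\ll\eta,1/h$ absorbs all the losses.
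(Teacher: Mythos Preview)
There is a genuine gap in your plan, and it lies exactly in the ``absorbing window'' idea. You propose to insert $v$ into the block $[s(H),h-1]$ of a complete bipartite gadget $H_0$ and claim this yields a spanning copy of $H$. But the hypothesis that $[s(H),h-1]$ is independent in $H$ does \emph{not} mean those vertices are isolated: every vertex in $B:=[s(H),\lceil h/2\rceil-1]$ is adjacent to $h$ (indeed $s(H)$ is by definition), and vertices in $C:=[\lceil h/2\rceil+1,h-1]$ may be adjacent to $A:=[2,s(H)-1]$. So if $v\in\{x,y\}$ is to play a role in $B$, the edge from $v$ to the vertex of the gadget playing role $h$ must be an edge of $G$; if $v$ plays role $h$ (your ``rightmost inserted vertex'' in the $(V_1,V_1)$ case), the edges from $v$ to the gadget vertices playing roles in $B$ must be edges of $G$. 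None of these adjacencies come from $H_0$ being complete bipartite, and nothing in your construction forces the gadget to sit inside $N(x)$ or $N(y)$. Your pigeonhole/dense-pair step only produces many copies of $H_0$; it says nothing about the edges between those copies and the fixed vertices $x,y$.

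The paper handles this by a different case split. When both $x,y\le (1-\eta/3)n$, it absorbs each of them only as one of the truly isolated vertices $1$ or $\lceil h/2\rceil$, which requires no adjacency at all; the gadget is placed with its large side inside the rightmost interval $[(1-\eta/3)n+1,n]$ so that $x$ and $y$ sit either to the left of or in the gap of the gadget. The hard case is when $y>(1-\eta/3)n$: there is no room to the right of $y$, so $y$ must play role $h$, and now the adjacency to $B$ is essential. The paper therefore builds the gadget \emph{inside} $N(y)$ (using $\delta(G)\ge\eta n$ to get $|N(y)|\ge\eta n/3$ inside $(x,y)$, then extracting a regular pair there). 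Depending on the order of the two sides of this regular pair, one either gets away with a single $(h-1)$-set, or one needs a larger complete bipartite block of size $bh-1$ (where $b=\lceil h/2\rceil-s(H)$) that decomposes in two different ways---one tiling absorbing $x$ as role $1$, the other absorbing $y$ as role $h$ with the $N(y)$-adjacencies used for the $B$--$h$ edges. The factor $s=2b$ in the statement comes from combining two such $bh$-blocks via an intermediate vertex $z$, not from any ``sliding window'' count.
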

\begin{proof}
Choose $0<\xi\ll\xi_1, \xi_2\ll \xi_3 \ll \varepsilon' \ll\varepsilon\ll d\ll d' \ll\eta \ll 1/h$, and without loss of generality we always assume $x<y$. 
Let $A:=[2, s(H)-1]$, $B:=[s(H), \lceil h/2 \rceil -1]$, and $C:=[\lceil h/2 \rceil +1, h-1]$.
We also write $a:=|A|$, $b:=|B|$ and $c:=|C|$, then $h=a+b+c+3$.
Note that $A$ and $B\cup C$ are independent sets of $H$, and every edge of $H$ is either between intervals $A$ and $C$, or between $B$ and $h$. 
\begin{claim}\label{claim:case1}
For $x, y\leq (1 - \eta/3)n$, there are at least $\xi_1 n^{h-1}$ $(h-1)$-sets $X_1 \subseteq V(G)$ such that both 
 $G[X_1 \cup \{x\}]$ and $G[X_1 \cup \{y\}]$ span copies of $H$.
\end{claim}
\begin{proof}
Let $H_1$ be a complete bipartite ordered graph with parts $S_1 < L_1$, where $|S_1|=a+b+1$ and $|L_1|=c+1$. 
For a copy of $H_1\subseteq G$ and a vertex $v\in V(G)\setminus V(H_1)$, we say $H_1$ is \textit{good for $v$} if either $v < S_1 <L_1$, or $S_1 < v < L_1$.
Note that $G[V(H_1)\cup v]$ contains a spanning copy of $H$, if $H_1$ is good for $v$. 
Now let
$V_1:=\{v\in [n]\mid v<x\}$, $V_2:=\{v\in [n] \mid x<v<y\}$, $V_3:=\{v\in [n] \mid y<v \leq (1 - \eta/3)n\}$, and $V_0=[(1 - \eta/3)n+1, n]$.
By~(\ref{eq: conn2de}) and the pigeonhole principle, there exists $i\in [3]$ such that 
\begin{align}\label{eqqq}
e(G[V_i, V_0])
\geq \frac{1}{3}\left(\frac{\eta n}{3}\right)\left(\eta n- \frac{\eta n}{3}\right)
\geq \frac{2\eta^2 n^2}{27}.
\end{align}
Notice that for any choice of $i\in [3]$, every copy of $H_1$ in $G[V_i, V_0]$ is good for both $x$ and $y$. So, as in the proof of Lemma~\ref{lem:conn1}, (\ref{eqqq}) implies that there are $\xi_1 n^{h-1}$ copies of $H_1$ in $G[V_i, V_0]$ which are good for both $x$ and $y$, as desired.
\end{proof}

\begin{claim}\label{claim:case2}
For $x\leq \eta n/3$ and $y\geq (1 - \eta/3)n$, there are at least $\xi_2 n^{bh-1}$ $(bh-1)$-sets $X_2 \subseteq V(G)$ such that both 
 $G[X_2 \cup \{x\}]$ and $G[X_2 \cup \{y\}]$ contain perfect $H$-tilings.
\end{claim}
\begin{proof}
Let $M:=[x+1, y-1]$.
By~(\ref{eq: conn2de}), we have $|N(y)\cap M|\geq \eta n - 2\eta n/3=\eta n/3$. Let $N$ be a subset of $N(y)\cap M$ of size $\eta n/6$; then
\[
e(G[N, M\setminus N])\geq \left(\frac{\eta n}{6}\right)\left(\eta n - \frac{2\eta n}{3} - \frac{\eta n}{6}\right)=\frac{\eta^2n^2}{36}.
\]
A standard application of the regularity method shows that there exists an $(\varepsilon', d')$-regular pair $(P', Q')$ in $G$, where $P'\subseteq N$; $Q'\subseteq M\setminus N$; $|P'|,|Q'|\geq 2\xi _3 n$.
By Lemma~\ref{lem:crop} and Lemma~\ref{lemma:regslic}, there exist sets $P\subseteq P'$ and $Q\subseteq Q'$ such that: $|P|,|Q|\geq \xi _3n$; $(P, Q)$ is an $(\varepsilon, d)$-regular pair in $G$;  either $P<Q$ or $Q<P$.
\\

\noindent\textbf{Case 1: $P<Q$.}\\
Let $H_2$ be a complete bipartite ordered graph with parts $S_2 < L_2$, where $|S_2|=a+b+1$ and $|L_2|=c+1$.
By Lemma~\ref{keylem}, there are at least $\xi_2 n^{h-1}$ copies of $H_2$ in $G[P, Q]$, and for every such $H_2$, we have $x<S_2<L_2<y$ and $S_2\subseteq P \subseteq N \subseteq N(y)$.
Recall that each edge of $H$ lies either between $A$ and $C$, or between $B$ and $h$.  
Therefore, $G[V(H_2)\cup \{x\}]$ contains a spanning copy of $H$, as $H_2$ is a complete bipartite graph. 
Similarly, $G[V(H_2)\cup \{y\}]$ also contains a spanning copy of $H$, as $H_2$ is a complete bipartite graph and $S_2\subseteq N(y)$.
Hence, there are at least $\xi_2 n^{h-1}$ $(h-1)$-sets $X'\subseteq V(G)$ such that both $G[X'\cup \{x\}]$ and $G[X'\cup \{y\}]$ contain perfect $H$-tilings. By adding $b-1$ additional disjoint copies of $H$ (which can be easily found in $G[P, Q]$ as $(P, Q)$ is an $(\varepsilon, d)$-regular pair in $G$), one can immediately see that Claim~\ref{claim:case2} holds in this case.
\\

\noindent\textbf{Case 2: $Q<P$.}\\
Let $F_1$ be the complete bipartite ordered graph with parts $S'_1 < L'_1$, where $|S'_1|=a+b+1$ and $|L'_1|=c+2$. 
Let $F_2$ be a complete bipartite ordered graph with parts $S'_2 < L'_2$, where $|S'_2|=a+b+2$ and $|L'_2|=c+1$. 
Note that both $F_1$ and $F_2$ contain a spanning copy of $H$.

Let $F_3$ be the complete bipartite ordered graph with parts $S'_3 < L'_3$, where $|S'_3|=a+b$ and $|L'_3|=c+2$. 
We say a copy of $F_3$ is \textit{good for $x$} if $x<S'_3 < L'_3$.
Note that $G[V(F_3)\cup\{x\}]$ contains a spanning copy of $H$, if $F_3$ is good for $x$.
Lastly, let $F_4$ be the complete bipartite ordered graph with parts $S'_4 < L'_4$, where $|S'_4|=a+1$ and $|L'_4|=b+c+1$. 
We say a copy of $F_4$ is \textit{good for $y$} if $S'_4 < L'_4<y$ and $L'_4\subseteq N(y)$. Observe that $G[V(F_4)\cup\{y\}]$ contains a spanning copy of $H$, if $F_4$ is good for $y$.

Let $H_3$ be the complete bipartite ordered graph with parts $S_3 < L_3$, where $|S_3|=b(a+b+1)-1$ and $L_3=b(c+2)$.
By Lemma~\ref{keylem}, there are at least $\xi_2 n^{bh-1}$ copies of $H_3$ in $G[P, Q]$, and for every such $H_3$, we have $x<S_3<L_3<y$ and $L_3\subseteq P \subseteq N(y)$.
Note that such $H_3$ can be decomposed into $b-1$ copies of $F_1$ and one good copy of $F_3$.
This indicates that $G[V(H_3)\cup \{x\}]$ contains a perfect $H$-tiling.
Similarly, such $H_3$ can also be decomposed into $b-1$ copies of $F_2$ and one good copy of $F_4$, which indicates that $G[V(H_3)\cup \{y\}]$ contains a perfect $H$-tiling.
\end{proof}

For every $x , y\leq (1 - \eta/3)n$ or $x\leq \eta n/3$ and $y\geq (1 - \eta/3)n$, simply adding enough disjoint copies of $H$ to the sets obtained from Claims~\ref{claim:case1} or~\ref{claim:case2} completes the proof.
For every $x\geq \eta n/3$ and $y\geq (1 - \eta/3)n$, there are at least $\eta n/3$ vertices $z$ (i.e.~the vertices in $[\eta n/3]$) such that: (i)~$\{y, z\}$ satisfies the condition of Claim~\ref{claim:case2}; (ii) $\{x, z\}$ either satisfies the condition of Claim~\ref{claim:case1} or Claim~\ref{claim:case2}. 
Applying Claims~\ref{claim:case1} and~\ref{claim:case2} on pairs $\{x, z\}$ and $\{y, z\}$ produces many disjoint copies of $X^{x, z}_1\cup X^{y, z}_2$ (or $X^{x, z}_2\cup X^{y, z}_2$), where $X^{x, z}_1$ refers to $(h-1)$-sets obtained from Claim~\ref{claim:case1} for $\{x, z\}$, and similarly for $X^{x, z}_2$ and $X^{y, z}_2$. 
Finally, adding enough extra disjoint copies of $H$ to $z\cup X^{x, z}_1\cup X^{y, z}_2$ (or $z\cup X^{x, z}_2\cup X^{y, z}_2$), 
we show that for every $x,y\in [n]$, there are at least $\xi n^{2bh-1}=\xi n^{sh-1}$ $(sh-1)$-sets $X \subseteq V(G)$ such that both 
 $G[X \cup \{x\}]$ and $G[X \cup \{y\}]$ contain perfect $H$-tilings.
\end{proof}

\noindent\textbf{Proof of Theorem~\ref{thm:abs2}.} Since $H$ has property A, it  satisfies the following conditions:
\begin{itemize}
\item all edges of $H$ are between the intervals $[1,  \lceil h/2 \rceil -1]$ and $[\lfloor h/2 \rfloor +2, h]$.
\item if $h$ is even, then the vertices $h/2, h/2 +1$ are isolated; if $h$ is odd, then the vertex $(h+1)/2$ is isolated. 
\end{itemize}
Furthermore, since $H$ does not have property B, at least one of $1$ and $h$ must be isolated in $H$.

We first assume that both $1, h$ are isolated, then by Definition~\ref{def:Pc} neither of them has Property~C. 
Note that $H$ satisfies the assumptions in Lemma~\ref{lem:conn1}. Together with Lemma~\ref{lo} (applied with $s=1$), this immediately implies Theorem~\ref{thm:abs2}.

Now without loss of generality, we assume that $1$ is isolated in $H$ but $h$ is not. 
Since $h$ does not have Property~C, by Definition~\ref{def:Pc}, $[s(H), h-1]$ is an independent set in $H$.
Similarly, $H$ satisfies the assumptions in Lemma~\ref{lem:conn2}, and this, together with Lemma~\ref{lo}, completes the proof.
\qed

\section{Concluding remarks}
\label{sec:conclu}
In this paper we have introduced a general framework for the perfect $H$-tiling problem in ordered graphs. This approach can be summarized as follows:
\begin{itemize}
    \item[Step 1:] Find a candidate extremal example; an $n$-vertex ordered graph $G$ with minimum degree $\delta (G)=\alpha n - O(1)$ without a perfect $H$-tiling.
    \item[Step 2:] Find a bottlegraph $B$ assigned to $H$ with $\alpha \geq 1-1/\chi_{cr}(B)$.
    \item[Step 3:] If $\alpha \geq 1-1/\chi_< (H)$ then Theorems~\ref{thm:abs} and~\ref{thm:almtil} now combine to yield the asymptotically exact threshold. Otherwise, one seeks an improved absorbing theorem, using structural information about $H$ (\'a la Theorem~\ref{thm:abs2}).
    
\end{itemize}
Despite introducing this framework, we suspect determining the perfect $H$-tiling threshold for an arbitrary $H$ will be challenging in the sense that there could be a range of different extremal examples and optimal bottlegraphs, depending on the precise structure of $H$.

On the other hand,
in the case when $H$ is an  $h$-vertex ordered graph and $12\in E(H)$ or $(h-1)h \in E(H)$, it is actually straightforward to deduce from Theorem~\ref{hs} the minimum degree threshold for forcing a perfect $H$-tiling.
\begin{prop}
Let $n, h \in \mathbb N$ such that $h | n$.
Suppose $H$ is an $h$-vertex ordered graph.
If $G$ is an $n$-vertex ordered graph with $\delta(G)\geq (1-1/h)n$, then $G$ contains a perfect $H$-tiling.

Moreover, suppose $12\in E(H)$ or $(h-1)h \in E(H)$. Then there are $n$-vertex ordered graphs with $\delta(G)\geq (1-1/h)n-1$ that do not contain a perfect $H$-tiling.

\end{prop}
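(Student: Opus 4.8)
The plan is to prove the two halves separately, as they have quite different flavours. For the positive part, I would simply invoke the Hajnal--Szemer\'edi theorem (Theorem~\ref{hs}): since $\delta(G)\geq (1-1/h)n$, $G$ contains a perfect $K_h$-tiling, i.e.\ a partition of $V(G)$ into $n/h$ cliques of order $h$. Now in each such clique, any ordered $h$-vertex graph $H$ embeds trivially --- the $h$ vertices of the clique, taken in their natural order in $[n]$, receive the labels $1,\dots,h$, and since the clique contains all $\binom{h}{2}$ edges, every edge of $H$ is present. Hence the $K_h$-tiling upgrades to a perfect $H$-tiling, with no use of the structure of $H$ at all. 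This part is essentially immediate and requires no real work.

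For the lower bound, I would build a divisibility-type barrier tailored to the hypothesis $12 \in E(H)$ (the case $(h-1)h \in E(H)$ being symmetric by reversing the order). The idea is: any copy of $H$ in $G$ must use two consecutive (in the vertex ordering of $G$) vertices that are adjacent, namely whichever vertices play the roles of $1$ and $2$ --- wait, they need not be consecutive in $G$, only that the vertex playing role $1$ precedes the one playing role $2$, and they are adjacent. So instead I would exploit that the \emph{leftmost} vertex of any copy of $H$ (playing role $1$) has a neighbour further to the right (playing role $2$). Concretely, take $G$ to be a graph on $[n]$ built so that the first vertex, vertex $1 \in [n]$, is isolated except possibly with small neighbourhood, or more cleanly: let $k$ be the largest integer with $k < n$ and $h \nmid k$, and let $G$ be the disjoint union of a clique on $[1,k]$ and a clique on $[k+1,n]$, \emph{then remove all edges incident to the vertex $n$} --- no, that only kills copies with $n$ as a right endpoint.

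The cleanest construction: since $12\in E(H)$, the vertex of $G$ playing the role of $1$ in any embedded copy of $H$ is \emph{not} the last vertex of that copy, so it has a neighbour to its right within the copy. Take $G$ on $[n]$ to be the join-free construction where vertex $1\in[n]$ is isolated and $[2,n]$ induces a clique; then $\delta(G) = n-2 \geq (1-1/h)n - 1$ for $h\geq 2$ and $n$ large, but this \emph{does} contain a perfect $H$-tiling (put vertex $1$ in a copy that uses it only in an isolated-role slot --- but $1$ and $2$ of $H$ may not be such slots). The subtlety is whether $H$ has an isolated vertex. The right move is: take $k$ the largest integer less than $\lceil n/2\rceil$ with $h\nmid k$, let $G$ be the disjoint union of a clique on $[1,k]$ and a clique on $[k+1,n]$ with additionally \emph{all} edges between $[1,k]$ and $[k+1,n]$ \emph{except} those incident to vertex $1$; so vertex $1$ sees only $[2,k]$. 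Then $\delta(G)\geq k-1 \geq$ the claimed bound is false --- $\delta(G) = k-1$ is about $n/2$, too small. So I must not split into two cliques.

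Let me reconsider: to get $\delta(G)\geq (1-1/h)n - 1$ with an obstruction, the obstruction must be local/divisibility at the scale of $h$, not of $n/2$. Take the Hajnal--Szemer\'edi extremal graph for $K_h$: $G = K_{s_1,\dots}$ --- rather, take $G$ to be the complete $h$-partite graph with parts $V_1,\dots,V_h$ of nearly equal size but with $|V_1| = n/h + 1$ and $|V_h| = n/h - 1$ (say), ordered so that $V_1 < V_2 < \dots < V_h$, and delete one edge cleverly. Actually the standard $K_h$-lower-bound graph has $\delta = (1-1/h)n - 1$ and no perfect $K_h$-tiling; but it may have a perfect $H$-tiling if $H\neq K_h$. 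The hard part is exactly this: \textbf{the main obstacle is finding a construction with minimum degree $(1-1/h)n-1$ that blocks $H$-tilings using only the single edge $12\in E(H)$}, since with $\delta$ this large the graph is extremely dense and most configurations appear. I would aim for: parts $V_1,\dots,V_h$ ordered left to right, complete $h$-partite, $|V_1|=n/h-1$, $|V_h|=n/h+1$, and crucially delete \emph{all} edges between $V_1$ and $V_2$. Then $\delta(G) \geq n - |V_1| - |V_2| = (1-2/h)n$ roughly --- still too small for $h \geq 3$. For $h=2$: $H$ is an edge or two isolated vertices; if $12\in E(H)$ then $H=K_2$ and this is just a perfect matching, threshold $n/2$, and $\delta \geq n/2 - 1$ not forcing a perfect matching is classical. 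So the construction genuinely differs by cases in $h$, and the $h\geq 3$ case with such a tiny edge constraint is the real difficulty; I expect the intended construction deletes a sparse set of edges (only $O(n)$ of them, incident to one vertex or one small part) from the $K_h$-extremal graph so that the one vertex or small part cannot serve as the "$1$"-slot of any copy, forcing a parity/divisibility contradiction, and verifying both that $\delta$ drops by only $1$ and that no perfect $H$-tiling survives is where all the care goes.
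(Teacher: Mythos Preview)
Your argument for the first part is correct and matches the paper exactly: apply Hajnal--Szemer\'edi to get a perfect $K_h$-tiling, then observe that any ordered clique of size $h$ contains every $h$-vertex ordered graph.

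For the second part, however, you are chasing the wrong kind of extremal example. All of your attempts are divisibility or local barriers, and as you discovered, each one either has minimum degree far below $(1-1/h)n-1$ or fails to block the tiling. The paper instead uses a \emph{space barrier}, and the construction is much simpler than anything you tried. Observe that the hypothesis $12\in E(H)$ forces $\alpha_1^+(H)=1$ (since $[1,1]$ is independent but $[1,2]$ is not), and hence $\alpha^*(H)=1/h$; the lower bound then follows directly from Proposition~\ref{prop:spabarr}. Concretely, take $G$ on $[n]$ to be the join of an empty graph on $[1,n/h+1]$ with a complete graph on $[n/h+2,n]$. The vertices in the independent part have degree exactly $(1-1/h)n-1$, so $\delta(G)=(1-1/h)n-1$. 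Now any copy of $H$ in $G$ can place at most one vertex in $[1,n/h+1]$: if the vertices playing roles $1$ and $2$ both landed there, they would have to be adjacent (since $12\in E(H)$), contradicting that this part is independent. Hence a perfect $H$-tiling could cover at most $n/h$ vertices of the set $[1,n/h+1]$, which has $n/h+1$ elements.

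The key idea you missed is that the single edge $12$ does not force a local or parity obstruction; it forces a \emph{space} obstruction, because it caps how many vertices each copy of $H$ can draw from an initial independent interval. Once you see that, the construction requires no edge deletions, no case analysis on $h$, and no cleverness beyond making one part slightly too large.
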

\proof
Consider the unordered underlying graph $G'$ of any ordered $n$-vertex graph $G$ with $\delta(G)\geq (1-1/h)n$. Theorem~\ref{hs} implies that $G'$ contains a perfect $K_h$-tiling. Since any ordered copy of $K_h$ contains $H$, this ensures $G$ contains a perfect $H$-tiling.

For the moreover part, notice such $H$ satisfy $\alpha^*(H)=1/h$. The result then follows directly from Proposition~\ref{prop:spabarr}.
\endproof

In~\cite{Komlos}, Koml\'os determined the minimum degree threshold for an (unordered) graph to contain an $H$-tiling covering a given proportion of the vertices; it would be interesting to obtain an ordered analogue of this result.
\begin{question}\label{quesremark}
Let $s \in (0,1)$ and $H$ be an ordered graph. What is the minimum degree threshold that ensures an ordered graph $G$ contains an $H$-tiling covering at least an $s$th proportion of its vertices?
\end{question}

There has also been interest in Ramsey and Tur\'an properties of \emph{edge ordered graphs} (see e.g.~\cite{balko2, tardosbcc}; it would be interesting to study the perfect $H$-tiling problem in this setting also.

\smallskip

Other than tiling problems, there are many natural embedding problems to consider for ordered graphs. We now raise a couple of such problems. Here by an \emph{ordered cycle} we just mean that it is a copy of some cycle $C$ where $V(C)$ has been assigned an ordering.

\begin{question}
Let $k \geq 2$ be a fixed integer and let $s \in (0,1)$. What is the minimum degree threshold that ensures an ordered $n$-vertex graph $G$ contains a copy  of some ordered cycle $C$ of length at least $sn$ and with
$\chi_<(C)=k$?
\end{question}
The following question can be viewed as raising an ordered version of the well-known El-Zahar conjecture~\cite{el}.
\begin{question}\label{newnew}
Let $k,t \geq 2$ be  fixed integers and let $\mathcal C=\{C^1,\dots, C^t\}$ be a fixed family of $t$ (not necessarily distinct) ordered cycles where  $\chi_<(C^i)=k$ for each $i \in [t]$. Set  $n_i:=|C^i|$.
What is the minimum degree threshold that ensures an ordered graph $G$ on
$n:=n_1+\dots+n_t$ vertices contains vertex-disjoint copies of each  cycle $C^1,\dots,C^t$ that together cover the vertex set of $G$?
\end{question}
We suspect our approaches to regularity and absorbing will be useful for attacking this problem for large $n$. Variants of Question~\ref{newnew} (e.g., when $\mathcal C$ contains cycles of different interval chromatic number) would also be interesting to investigate.

{\bf Remark:} Since this paper was submitted, the third author and Freschi~\cite{vo2} have asymptotically determined $\delta _<(H,n)$ for all ordered graphs $H$ with $\chi_<(H)\geq 3$. Their approach both relies on tools from this paper, as well as introducing new ideas. They have also given an asymptotic solution to Question~\ref{quesremark}. Further, Hurley, Joos and Lang~\cite{hjl} have proven some very general tiling results, including a generalisation of the K\"uhn--Osthus tiling theorem that allows tiles to be different and to have size that grows with the size of the host graph. As pointed out in~\cite{hjl}, it would be interesting to seek analogous results in the setting of ordered graphs too.

\section{Acknowledgements}
The authors are grateful to the BRIDGE strategic alliance between the University of Birmingham
and the University of Illinois at Urbana-Champaign. In particular, much of the research in this paper
was carried out whilst the third author was visiting UIUC. The authors are also grateful to the referees for their helpful and careful reviews.

\end{document}